\title{Equivariant property (SI) revisited}
\author{G{\'a}bor Szab{\'o}}
\address{KU Leuven, Department of Mathematics, Celestijnenlaan 200b box 2400, \phantom{-..-}B-3001 Leuven, Belgium}
\email{gabor.szabo@kuleuven.be}
\thanks{This work was supported by the start-up grant STG/18/019 of KU Leuven, the research project C14/19/088 funded by the research council of KU Leuven.}
\subjclass[2010]{Primary 46L35, 46L55, 46L40}
\begin{document}

\newcommand{\IA}[0]{\mathbb{A}} \newcommand{\IB}[0]{\mathbb{B}}
\newcommand{\IC}[0]{\mathbb{C}} \newcommand{\ID}[0]{\mathbb{D}}
\newcommand{\IE}[0]{\mathbb{E}} \newcommand{\IF}[0]{\mathbb{F}}
\newcommand{\IG}[0]{\mathbb{G}} \newcommand{\IH}[0]{\mathbb{H}}
\newcommand{\II}[0]{\mathbb{I}} \renewcommand{\IJ}[0]{\mathbb{J}}
\newcommand{\IK}[0]{\mathbb{K}} \newcommand{\IL}[0]{\mathbb{L}}
\newcommand{\IM}[0]{\mathbb{M}} \newcommand{\IN}[0]{\mathbb{N}}
\newcommand{\IO}[0]{\mathbb{O}} \newcommand{\IP}[0]{\mathbb{P}}
\newcommand{\IQ}[0]{\mathbb{Q}} \newcommand{\IR}[0]{\mathbb{R}}
\newcommand{\IS}[0]{\mathbb{S}} \newcommand{\IT}[0]{\mathbb{T}}
\newcommand{\IU}[0]{\mathbb{U}} \newcommand{\IV}[0]{\mathbb{V}}
\newcommand{\IW}[0]{\mathbb{W}} \newcommand{\IX}[0]{\mathbb{X}}
\newcommand{\IY}[0]{\mathbb{Y}} \newcommand{\IZ}[0]{\mathbb{Z}}

\newcommand{\CA}[0]{\mathcal{A}} \newcommand{\CB}[0]{\mathcal{B}}
\newcommand{\CC}[0]{\mathcal{C}} \newcommand{\CD}[0]{\mathcal{D}}
\newcommand{\CE}[0]{\mathcal{E}} \newcommand{\CF}[0]{\mathcal{F}}
\newcommand{\CG}[0]{\mathcal{G}} \newcommand{\CH}[0]{\mathcal{H}}
\newcommand{\CI}[0]{\mathcal{I}} \newcommand{\CJ}[0]{\mathcal{J}}
\newcommand{\CK}[0]{\mathcal{K}} \newcommand{\CL}[0]{\mathcal{L}}
\newcommand{\CM}[0]{\mathcal{M}} \newcommand{\CN}[0]{\mathcal{N}}
\newcommand{\CO}[0]{\mathcal{O}} \newcommand{\CP}[0]{\mathcal{P}}
\newcommand{\CQ}[0]{\mathcal{Q}} \newcommand{\CR}[0]{\mathcal{R}}
\newcommand{\CS}[0]{\mathcal{S}} \newcommand{\CT}[0]{\mathcal{T}}
\newcommand{\CU}[0]{\mathcal{U}} \newcommand{\CV}[0]{\mathcal{V}}
\newcommand{\CW}[0]{\mathcal{W}} \newcommand{\CX}[0]{\mathcal{X}}
\newcommand{\CY}[0]{\mathcal{Y}} \newcommand{\CZ}[0]{\mathcal{Z}}

\newcommand{\FA}[0]{\mathfrak{A}} \newcommand{\FB}[0]{\mathfrak{B}}
\newcommand{\FC}[0]{\mathfrak{C}} \newcommand{\FD}[0]{\mathfrak{D}}
\newcommand{\FE}[0]{\mathfrak{E}} \newcommand{\FF}[0]{\mathfrak{F}}
\newcommand{\FG}[0]{\mathfrak{G}} \newcommand{\FH}[0]{\mathfrak{H}}
\newcommand{\FI}[0]{\mathfrak{I}} \newcommand{\FJ}[0]{\mathfrak{J}}
\newcommand{\FK}[0]{\mathfrak{K}} \newcommand{\FL}[0]{\mathfrak{L}}
\newcommand{\FM}[0]{\mathfrak{M}} \newcommand{\FN}[0]{\mathfrak{N}}
\newcommand{\FO}[0]{\mathfrak{O}} \newcommand{\FP}[0]{\mathfrak{P}}
\newcommand{\FQ}[0]{\mathfrak{Q}} \newcommand{\FR}[0]{\mathfrak{R}}
\newcommand{\FS}[0]{\mathfrak{S}} \newcommand{\FT}[0]{\mathfrak{T}}
\newcommand{\FU}[0]{\mathfrak{U}} \newcommand{\FV}[0]{\mathfrak{V}}
\newcommand{\FW}[0]{\mathfrak{W}} \newcommand{\FX}[0]{\mathfrak{X}}
\newcommand{\FY}[0]{\mathfrak{Y}} \newcommand{\FZ}[0]{\mathfrak{Z}}

\newcommand\set[1]{\left\{#1\right\}}  
\renewcommand{\phi}[0]{\varphi}
\newcommand{\eps}[0]{\varepsilon}
\newcommand{\cstar}[0]{\ensuremath{\mathrm{C}^*}}
\newcommand{\cc}[0]{\ensuremath{\simeq_{\mathrm{cc}}}}
\newcommand{\id}[0]{\ensuremath{\operatorname{id}}}
\newcommand{\dist}[0]{\ensuremath{\operatorname{dist}}}
\newcommand{\dst}[0]{\displaystyle}
\newcommand{\dimrokc}[0]{\dim_{\mathrm{Rok}}^{\mathrm{c}}}

\newtheorem{satz}{Satz}[section]		

\newaliascnt{corCT}{satz}
\newtheorem{corollary}[corCT]{Corollary}
\aliascntresetthe{corCT}
\providecommand*{\corCTautorefname}{Corollary}
\newaliascnt{lemmaCT}{satz}
\newtheorem{lemma}[lemmaCT]{Lemma}
\aliascntresetthe{lemmaCT}
\providecommand*{\lemmaCTautorefname}{Lemma}
\newaliascnt{propCT}{satz}
\newtheorem{proposition}[propCT]{Proposition}
\aliascntresetthe{propCT}
\providecommand*{\propCTautorefname}{Proposition}
\newaliascnt{theoremCT}{satz}
\newtheorem{theorem}[theoremCT]{Theorem}
\aliascntresetthe{theoremCT}
\providecommand*{\theoremCTautorefname}{Theorem}
\newtheorem*{theoreme}{Theorem}
\newtheorem*{core}{Corollary}

\theoremstyle{definition}

\newaliascnt{conjectureCT}{satz}
\newtheorem{conjecture}[conjectureCT]{Conjecture}
\aliascntresetthe{conjectureCT}
\providecommand*{\conjectureCTautorefname}{Conjecture}
\newaliascnt{defiCT}{satz}
\newtheorem{definition}[defiCT]{Definition}
\aliascntresetthe{defiCT}
\providecommand*{\defiCTautorefname}{Definition}
\newtheorem*{defie}{Definition}
\newaliascnt{notaCT}{satz}
\newtheorem{notation}[notaCT]{Notation}
\aliascntresetthe{notaCT}
\providecommand*{\notaCTautorefname}{Notation}
\newtheorem*{notae}{Notation}
\newaliascnt{remCT}{satz}
\newtheorem{remark}[remCT]{Remark}
\aliascntresetthe{remCT}
\providecommand*{\remCTautorefname}{Remark}
\newtheorem*{reme}{Remark}
\newaliascnt{exampleCT}{satz}
\newtheorem{example}[exampleCT]{Example}
\aliascntresetthe{exampleCT}
\providecommand*{\exampleCTautorefname}{Example}
\newaliascnt{questionCT}{satz}
\newtheorem{question}[questionCT]{Question}
\aliascntresetthe{questionCT}
\providecommand*{\questionCTautorefname}{Question}

\newcounter{theoremintro}
\renewcommand*{\thetheoremintro}{\Alph{theoremintro}}
\newaliascnt{theoremiCT}{theoremintro}
\newtheorem{theoremi}[theoremiCT]{Theorem}
\aliascntresetthe{theoremiCT}
\providecommand*{\theoremiCTautorefname}{Theorem}
\newaliascnt{coriCT}{theoremintro}
\newtheorem{cori}[coriCT]{Corollary}
\aliascntresetthe{coriCT}
\providecommand*{\coriCTautorefname}{Corollary}
\newaliascnt{conjectureiCT}{theoremintro}
\newtheorem{conjecturei}[conjectureiCT]{Conjecture}
\aliascntresetthe{conjectureiCT}
\providecommand*{\conjectureiCTautorefname}{Conjecture}

\numberwithin{equation}{section}
\renewcommand{\theequation}{e\thesection.\arabic{equation}}


\begin{abstract}
We revisit Matui--Sato's notion of property (SI) for \cstar-algebras and \cstar-dynamics.
More specifically, we generalize the known framework to the case of \cstar-algebras with possibly unbounded traces.
The novelty of this approach lies in the equivariant context, where none of the previous work allows one to (directly) apply such methods to actions of amenable groups on highly non-unital \cstar-algebras, in particular to establish equivariant Jiang--Su stability.
Our main result is an extension of an observation by Sato: For any countable amenable group $\Gamma$ and any non-elementary separable simple nuclear \cstar-algebra $A$ with strict comparison, every $\Gamma$-action on $A$ has equivariant property (SI).
A more general statement involving relative property (SI) for inclusions into ultraproducts is proved as well.
As a consequence we show that if $A$ also has finitely many rays of extremal traces, then every $\Gamma$-action on $A$ is equivariantly Jiang--Su stable.
We moreover provide applications of the main result to the context of strongly outer actions, such as a generalization of Nawata's classification of strongly outer automorphisms on the (stabilized) Razak--Jacelon algebra.
\end{abstract}

\maketitle

\tableofcontents


\section*{Introduction}

This paper is part of an ongoing effort to unravel the fine structure for actions of amenable groups on simple nuclear \cstar-algebras that fall within the scope of the Elliott program.
Our understanding about such \cstar-algebras and their classification has improved at a fast and furious pace in recent years --- see for example \cite{GongLinNiu15, ElliottNiu15, ElliottGongLinNiu15, TikuisisWhiteWinter17, GongLinNiu18, ElliottGongLinNiu17, GongLin17, Winter17} --- which naturally calls for an investigation into the structure of their innate symmetries, i.e., the ways in which groups can act on them.
The historical precedent is given by the Connes--Haagerup classification of injective factors \cite{Connes76, Haagerup87}, which in part involved \cite{Connes73, Connes75} and was then followed by a classification of amenable group actions on said factors \cite{Connes77, Jones80, Ocneanu85, SutherlandTakesaki89, KawahigashiSutherlandTakesaki92, KatayamaSutherlandTakesaki98, Masuda07, Masuda13}.
It has also been recognized early that amenability of the group is necessary to have a manageable classification theory of this sort \cite{Jones83}.

In stark comparison to the factor case, simple nuclear \cstar-algebras are not automatically well-behaved, which may stem from too high-dimensional topological information being encoded in their structure \cite{Villadsen99, Rordam03, Toms08, Tikuisis12}.
The Toms--Winter conjecture \cite{ElliottToms08, Winter10, WinterZacharias10, Winter12}, which may by now almost be called a theorem \cite{Rordam04Z, MatuiSato12acta, MatuiSato14UHF, Tikuisis14, SatoWhiteWinter15, BBSTWW, CETWW, CastillejosEvington19}, is postulating that various (a priori) different concepts of being well-behaved all coincide for (non-elementary) separable simple nuclear \cstar-algebras.
A particularly prominent condition on a \cstar-algebra $A$ to be well-behaved is to ask that it shall be $\CZ$-stable, i.e., isomorphic to the tensor product $A\otimes\CZ$. 
Here $\CZ$ is the standard notation for the so-called Jiang--Su algebra \cite{JiangSu99}, which for all intents and purposes may be regarded as an infinite-dimensional \cstar-algebra analog of the complex numbers $\IC$.
The naturality of this condition as a precursor to being classifiable becomes immediate once one realizes that no reasonably computable invariant can (under some very mild conditions) distinguish $A$ from $A\otimes\CZ$.
Additionally, all counterexamples to the initial Elliott conjecture \cite{Elliott94} fail to be $\CZ$-stable.
At a technical level, the process of tensorially stabilizing a simple nuclear \cstar-algebra $A$ with $\CZ$ can be seen as flattening out the structure of a given \cstar-algebra away from the topologically infinite-dimensional, providing enough noncommutative space for performing certain manipulations of elements in $A$ useful for classification.

As a combination of a remarkable work by many people involved in the above references, we now know that $\CZ$-stability indeed grants access to classification by the Elliott invariant for those separable unital simple nuclear \cstar-algebras that satisfy the universal coefficient theorem \cite{RosenbergSchochet87}.
Although the general non-unital case is not yet fully worked out as of publishing this article, it appears to be not too far behind.

The overarching question in the context of this paper is to what extent it should be possible to classify actions of an amenable group $\Gamma$ up to cocycle conjugacy on classifiable \cstar-algebras $A$, by using computable invariants.
There are some satisfactory results available for special groups $\Gamma$ acting sufficiently outerly on special \cstar-algebras, but they are usually underpinned by methods that are related to the choice of the group, such as the Rokhlin property \cite{HermanJones82, HermanOcneanu84, BratteliKishimotoRordamStormer93, Kishimoto95, BratteliEvansKishimoto95, Kishimoto96, EvansKishimoto97, Kishimoto98, Kishimoto98II, ElliottEvansKishimoto98, BratteliKishimoto00, Nakamura00, Izumi04, Izumi04II}.
Since I wish to mostly focus on actions of general amenable groups here, we shall at this point not review the full history of the available classification results for single automorphisms, finite group actions, etc.
If $\alpha: \Gamma\curvearrowright A$ is an action, then it is similarly (as above) the case that it is indistinguishable from the action $\alpha\otimes\id_\CZ:\Gamma\curvearrowright A$ with regard to most invariants one may come up with for $\Gamma$-\cstar-dynamical systems.
This certainly includes for example the induced action on traces, or $\Gamma$-equivariant Kasparov theory \cite{Kasparov88}.
Like in the ordinary classification program, this idea leads one to the concept of equivariant Jiang--Su stability:

\begin{defie}[{see \cite{TomsWinter07, Szabo18ssa}}] 
Let $\Gamma$ be a countable group and $\alpha: \Gamma\curvearrowright A$ an action on a separable \cstar-algebra.
Let $\CD$ be a strongly self-absorbing \cstar-algebra.\footnote{For example, think $\CD=\CZ$ or $\CD=\CO_\infty$ for the purpose of this introduction.}
We say that $\alpha$ is equivariantly $\CD$-stable if it is cocycle conjugate to the action $\alpha\otimes\id_\CD: \Gamma\curvearrowright A\otimes\CD$.
\end{defie}

In conclusion, if one is interested in developing any kind of classification program à la Elliott for (perhaps a certain abstractly determined class of) $\Gamma$-actions on classifiable \cstar-algebras, one also needs to settle whether such actions are automatically equivariantly Jiang--Su stable or not.

The traceless case is taken care of in \cite{Szabo18kp}, where one automatically gets equivariant absorption of the Cuntz algebra $\CO_\infty$.
For more special choices of the acting group this has again been known before, but I would go overboard by listing all those references here.
Regarding amenable group actions on a finite classifiable \cstar-algebra $A$, there exist promising results by Matui--Sato \cite{MatuiSato12, MatuiSato14}, Sato \cite{Sato16}, and Gardella--Hirshberg \cite{GardellaHirshberg18}, which at present require a somewhat restrictive assumption on the tracial state space of $A$ and the induced group action on it.
Not unlike in the factor case, all results of this type tend to fail for actions of non-amenable groups; see \cite{GardellaLupini18}.
The available work on equivariant $\CZ$-stability leads me to the following conjecture.

\begin{conjecturei} \label{conjecture-A}
Let $\Gamma$ be a countable amenable group.
Let $A$ be a separable simple nuclear and $\CZ$-stable \cstar-algebra.
Then every $\Gamma$-action on $A$ is equivariantly $\CZ$-stable.
\end{conjecturei}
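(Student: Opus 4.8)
The plan is to reduce equivariant $\CZ$-stability of a given action $\alpha\colon \Gamma\curvearrowright A$ to the construction of an approximately central $\CZ$-structure inside the fixed-point algebra of Kirchberg's central sequence algebra, and then to feed equivariant property (SI) into the generator picture of $\CZ$. Write $A_\infty$ for the sequence algebra, $\alpha_\infty$ for the induced action, and $F(A)=(A_\infty\cap A')/\mathrm{Ann}(A,A_\infty)$ for the central sequence algebra, on which $\alpha_\infty$ descends to an action $\tilde\alpha$. By the equivariant McDuff-type characterization (the $\Gamma$-equivariant analogue of Kirchberg's theorem, available since $\Gamma$ is countable and $A$ is separable), $\alpha$ is equivariantly $\CZ$-stable precisely when there is a unital $*$-homomorphism $\CZ\to F(A)^{\tilde\alpha}$ into the fixed-point subalgebra; the image must land in the fixed points because the action carried by $\CZ$ is trivial. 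Thus the whole problem becomes the production of one such unital embedding.

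Following the Matui--Sato scheme, I would assemble this embedding from two ingredients. First, for each $n$, a c.p.c.\ order-zero map $\varphi_n\colon M_n\to F(A)^{\tilde\alpha}$ whose complement $1-\varphi_n(1_{M_n})$ is tracially small, in the sense that it vanishes on the relevant limit traces, while a corner of $\varphi_n$ is tracially bounded below. Since $A$ is $\CZ$-stable, simple and nuclear, it is non-elementary and has strict comparison, so the required non-equivariant order-zero maps are furnished by the tracial structure in the usual manner; the delicate point is to render them approximately $\tilde\alpha$-fixed. This is where amenability of $\Gamma$ enters, through an Ocneanu/F\o{}lner averaging argument: starting from an approximately central $\varphi_n$, one averages over a sufficiently invariant finite subset of $\Gamma$ to enforce approximate equivariance while keeping the trace data under control.

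Second, I would invoke \emph{equivariant property (SI)}, the main theorem of this paper, which applies here because $A$ is non-elementary, separable, simple, nuclear and has strict comparison. It produces, inside the fixed-point central sequence algebra, an isometry-type element $s$ realizing the relation $1-\varphi_n(1_{M_n})\precsim \varphi_n(e_{11})$ at the level of $F(A)^{\tilde\alpha}$, namely $s^*s=1-\varphi_n(1_{M_n})$ and $\varphi_n(e_{11})s=s$. Combining $\varphi_n$ with $s$ then realizes the generators and relations of a (generalized) dimension-drop building block of $\CZ$, mapping unitally and $\tilde\alpha$-equivariantly into $F(A)^{\tilde\alpha}$. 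Letting $n$ vary and using that $\CZ$ is an inductive limit of such blocks yields the desired unital $*$-homomorphism $\CZ\to F(A)^{\tilde\alpha}$, which closes the first reduction.

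The hard part is the \emph{uniform construction of the equivariant order-zero maps} in the first ingredient. Equivariant property (SI) is available in full generality, so the obstruction is not there; rather, averaging $\varphi_n$ into the fixed points forces one to control the induced action $\Gamma\curvearrowright T(A)$ on the trace space. When $A$ has finitely many rays of extremal traces the orbit structure on traces is essentially finite and the averaging converges uniformly over the simplex, which is exactly the hypothesis under which this paper derives equivariant Jiang--Su stability as a corollary. For a general trace space --- possibly non-compact, possibly infinite-dimensional, carrying a complicated $\Gamma$-action --- it is unclear how to produce maps that are simultaneously approximately central, approximately equivariant, and tracially large uniformly in \emph{all} limit traces at once; this gap is what keeps the statement a conjecture rather than a theorem in full generality.
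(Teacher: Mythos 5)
The statement you are asked about is \autoref{conjecture-A}, which the paper explicitly leaves as a \emph{conjecture}: there is no proof of it in the text, and your proposal is not a proof of it either --- as you yourself concede in your final paragraph. What you have written is an accurate roadmap of the Matui--Sato strategy together with a correct diagnosis of where it breaks down, and the gap you name is genuinely the one the paper identifies as open. Concretely: the reduction to a unital $*$-homomorphism $\CZ\to F_\omega(A)^{\tilde{\alpha}_\omega}$ is valid (this is \cite[Corollary 3.8]{Szabo18ssa}), equivariant property (SI) is indeed available in full generality by \autoref{cor:equivariant-SI}, and the missing ingredient is a tracially large equivariant c.p.c.\ order zero map $M_k\to F_\omega(A)^{\tilde{\alpha}_\omega}$ for a general trace space with a general $\Gamma$-action on it. The paper proves exactly the special case you describe (finitely many rays of extremal traces) as \autoref{thm:equivariant-Z-stability}, and states in the introduction that the general existence of such order zero maps is the question for further research. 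So your proposal should not be read as closing the conjecture; it correctly reproduces the paper's partial result and its honest assessment of the obstruction.

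One substantive difference in mechanism is worth flagging for the special case. You propose to obtain the equivariant order zero maps by F{\o}lner-averaging approximately central, non-equivariant order zero maps directly inside $F_\omega(A)$ while ``keeping the trace data under control.'' This is not what the paper does, and averaging an order zero map over a F{\o}lner set does not obviously return an order zero map, nor does it obviously preserve tracial largeness without further argument. Instead, the paper passes to the von Neumann level: it forms $M=\pi_\tau(A)''$ for a suitable quasi-invariant trace $\tau$, identifies the statial ultrapower relative commutant with the Ocneanu ultrapower $M^\omega\cap M'$ (\autoref{prop:technical-stuff}), produces a unital copy of $M_k$ in $(M^\omega\cap M')^{\alpha^\omega}$ using that this is a finite direct sum of II$_1$ factors (\autoref{prop:vN-stuff}, which is precisely where finitely many extremal rays is used), and then lifts through the trace-kernel $\Gamma$-$\sigma$-ideal $\CJ_A$ via \autoref{rem:sigma-ideals} to get an equivariant order zero map into $F_\omega(A)^{\tilde{\alpha}_\omega}$. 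The F{\o}lner averaging in the paper occurs elsewhere, namely inside the proof of equivariant (SI) itself (\autoref{thm:equi-SI}), where one averages the partial isometry $\tilde{s}_0$ after arranging the orthogonality relations $\tilde{s}_0^*\gamma_{g\Lambda}(\tilde{s}_0)=0$; that averaging is legitimate precisely because of those relations. If you want to pursue the general conjecture, the von Neumann-algebraic route (or the newer uniform-trace techniques of \cite{CETWW, CastillejosEvington19}) is the place to look, not direct averaging of the c.p.c.\ maps.
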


Similarly to parts of the literature involving the Toms--Winter conjecture \cite{MatuiSato12acta}, Matui--Sato's notion of (equivariant) property (SI)\footnote{This expression abbreviates ``small isometries''.} is a seminal ingredient to verify equivariant $\CZ$-stability in an abstract context such as above.
Their brilliant insight led to the first satisfactory abstract results yielding equivariant Jiang--Su stability for strongly outer actions of amenable groups in \cite{MatuiSato12, MatuiSato14}.
In a recent work \cite{Sato16}, Sato has further improved on said approach, and showed that all $\Gamma$-actions on a unital \cstar-algebra $A$ as in \autoref{conjecture-A} automatically have property (SI) as long as the extreme boundary of the tracial state space is compact and finite-dimensional.
He then used this to show that such actions (with some further assumptions) are indeed equivariantly $\CZ$-stable by verifying a $\Gamma$-equivariant tracial McDuff-type property, which is there referred to as ``$\Gamma$-equivariant large embeddability of cones''.

The present work has two motivations:
Firstly, a very detailed inspection of Sato's approach reveals that the aforementioned assumption on the tracial state space is not necessary for obtaining the equivariant property (SI).
Secondly (and more importantly), the current state of the literature only allows one to apply property (SI) to unital \cstar-algebras, or perhaps algebraically simple ones with some amount of direct modification; see for example \cite[Section 5]{Nawata16}.
In the context of non-equivariant property (SI) and its applications to the Toms--Winter conjecture, this is not so problematic because the verification of ordinary \cstar-algebraic $\CZ$-stability or finite nuclear dimension may be performed at the level of some carefully chosen hereditary subalgebra, as is thorougly explained in \cite{CastillejosEvington19}.
In stark contrast, if one is investigating the structure of trace-scaling $\Gamma$-actions on stable \cstar-algebras, then one can quickly observe that there may be no $\Gamma$-invariant hereditary subalgebras with (non-trivial) bounded traces, which means that the current iterations of (equivariant) property (SI) are no longer applicable.
The notable exceptions in the literature, where interesting structural results could be proved for trace-scaling automorphisms on stable \cstar-algebras \cite{EvansKishimoto97, ElliottEvansKishimoto98, BratteliKishimoto00, Nawata19}, rely on comparably ad-hoc methods.
Here we therefore redevelop the entire framework of property (SI) from the ground up in the language of possibly non-unital \cstar-algebras that are allowed to possess unbounded traces.

Since this is ultimately a rather technical endeavour, we shall not discuss the general case here but instead sketch the non-unital framework only in a special case. 
Namely, let $A$ be a separable simple nuclear \cstar-algebra with a densely defined, unbounded lower semi-continuous trace $\tau$ which is unique up to scalar multiple.
Then $\tau$ admits a canonical extension to a generalized limit trace $\tau^\omega$ on the ultrapower $A_\omega$; see the preliminary section.
Even though this trace is clearly unbounded, one can simply pick some positive element $a\in A$ with $\tau(a)=1$, and induce a tracial state on the central sequence algebra $A_\omega\cap A'$ via $x\mapsto\tau^\omega(xa)$.
It is immediate that this assignment vanishes on the annihilator $A_\omega\cap A^\perp$, so it descends to a tracial state on Kirchberg's central sequence algebra $F_\omega(A)=(A_\omega\cap A')/(A_\omega\cap A^\perp)$; see \cite{Kirchberg04}.
This gives rise to a notion of a trace-kernel ideal in $F_\omega(A)$.
In complete analogy to the unital case, property (SI) means that a natural comparison property should hold inside the central sequence algebra: If $e,f\in F_\omega(A)$ are two positive contractions such that $e$ is in the trace kernel ideal and $f$ satisfies $\inf_{k\geq 1} \tau^\omega(af^k)>0$, then $e$ is Murray--von Neumann equivalent to an element on which $f$ acts as a unit.
Given also an action $\Gamma\curvearrowright A$, one similarly defines equivariant property (SI) to mean the same statement inside the fixed point algebra $F_\omega(A)^\Gamma$.
We remark that the approach presented in this paper does \emph{not} recover the treatment of property (SI) for order zero maps as required in \cite{BBSTWW, CETWW, CastillejosEvington19}.
Although I find it likely that this could be done with some further modifications, it seems that this additional layer of generality is only needed for applications related to nuclear dimension, which at this point would appear to be exhausted by the aforementioned references.

By a well-known argument due to Matui--Sato \cite{MatuiSato12acta}, property (SI) allows one to perturb a tracially large order zero map $M_k\to F_\omega(A)$ into a unital $*$-homomorphism from a prime dimension drop algebra $Z_{k,k+1}\to F_\omega(A)$ by appealing to R{\o}rdam--Winter's  universal description \cite{RordamWinter10} of $Z_{k,k+1}$.
This provides a sufficient criterion for $A\cong A\otimes\CZ$; see \cite{Kirchberg04, TomsWinter07}.
Using instead the equivariant version of property (SI), an entirely analogous method can be applied to maps going into the fixed point algebra $F_\omega(A)^\Gamma$, yielding equivariant Jiang--Su stability via \cite{Szabo18ssa}.

The main result of this paper is as follows, and may be seen as a generalization of Sato's observation \cite{Sato16} that equivariant property (SI) tends to hold automatically in the appropriate context; see \autoref{cor:equivariant-SI}:

\begin{theoremi} \label{theorem-B}
Let $A$ be a non-elementary separable simple nuclear \cstar-algebra with very weak comparison\footnote{This is defined in the preliminary section. As the name suggests, it is a very weak version of the more well-known strict comparison, so it certainly holds when $A$ is $\CZ$-stable \cite{Rordam04Z}.}.
Let $\Gamma$ be a countable amenable group.
Then $A$ has equivariant property (SI) relative to every action $\Gamma\curvearrowright A$.
\end{theoremi}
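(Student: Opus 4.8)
The plan is to separate the statement into a comparison-theoretic core that is insensitive to the group and an averaging step powered by amenability. First I would unwind the definition: fixing a densely defined trace $\tau$ and a normalizing positive element $a\in A$ with $\tau(a)=1$, equivariant property (SI) asks that for $\Gamma$-invariant positive contractions $e,f\in F_\omega(A)^\Gamma$ with $\tau^\omega(ae)=0$ (i.e.\ $e$ lies in the trace-kernel ideal $J$) and $\inf_{k\geq1}\tau^\omega(af^k)=:\gamma>0$, there be some $s\in F_\omega(A)^\Gamma$ with $s^*s=e$ and $fs=s$. Lifting to representing sequences $e=(e_n)$, $f=(f_n)$ in $A_\omega\cap A'$, the hypotheses read $\lim_{n\to\omega}\tau(ae_n)=0$ and $\liminf_{n\to\omega}\tau(af_n^k)\geq\gamma$ for every $k$, and this is the formulation in which the generalization to unbounded traces is genuinely taking place.

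For the comparison core I would first dispatch the non-equivariant version. Although $e$ has vanishing trace, its rank need not be small, so I would split off the bulk $e_\eta:=(e-\eta)_+$ and note, via the Chebyshev-type estimate $\eta\,d_\tau(e_\eta)\leq\tau(ae)$, that the rank of $e_\eta$ tends to $0$ uniformly over the relevant traces; meanwhile the largeness hypothesis bounds the rank of the spectral part of $f$ on which it acts as a unit below by $\gamma$. This is exactly the regime of a vanishing rank compared against a rank bounded away from zero, in which even the very weak form of comparison hypothesized on $A$ delivers a Cuntz subequivalence of $e_\eta$ into that part of $f$. I would upgrade this in the standard Matui--Sato fashion to an element $t_\eta$ with $t_\eta^*t_\eta=e_\eta$ and $ft_\eta=t_\eta$; letting $\eta\to0$ and reindexing along $\omega$ then produces a non-equivariant solution $t\in F_\omega(A)$. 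The only departure from the unital theory is bookkeeping, as every tracial estimate must be carried out after multiplying by $a$ and uniformly over the cone of densely defined lower semicontinuous traces rather than over a compact simplex of states.

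The equivariant upgrade is where amenability enters and is the main obstacle. The key device is the trace-preserving conditional expectation $E\colon F_\omega(A)\to F_\omega(A)^\Gamma$ obtained by F{\o}lner averaging, $E(x)=\lim_{n\to\omega}|F_n|^{-1}\sum_{g\in F_n}\alpha_g(x)$, the limit being taken in the tracial $2$-norm; its existence and faithfulness are precisely the incarnation of amenability (for a non-amenable $\Gamma$ no such invariant averaging is available, consistent with the failure of the conclusion in that regime). Using $E$ I would transport the comparison into the fixed-point algebra: since $e_\eta$ and $f$ are already invariant and the small-rank versus trace-large dichotomy is preserved by the restricted trace, one obtains an \emph{equivariant} Cuntz subequivalence $e_\eta\precsim f$ inside $F_\omega(A)^\Gamma$. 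The genuinely delicate part is to realize this equivariant subequivalence by an honest invariant isometry-type element rather than merely a Cuntz relation: I would build the non-equivariant $t_\eta$ as above and then average it against $E$, controlling the error by exploiting that $t_\eta$ lies in the trace-kernel ideal, so that the F{\o}lner averages converge in the tracial $2$-norm and the defining relations $s^*s=e$ and $fs=s$ survive the limit up to arbitrarily small tracial error.

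Finally I would assemble these approximate invariant solutions into an exact one by the usual reindexing argument: for each $n$ one extracts an element that is $(1/n)$-invariant and satisfies the two relations to within $1/n$ in the appropriate seminorm, and a diagonal sequence across $\omega$ represents the desired $s\in F_\omega(A)^\Gamma$. Separability of $A$ and countability of $\Gamma$ are exactly what make this $\omega$-diagonalization legitimate, and very weak comparison is used only through the non-equivariant core.
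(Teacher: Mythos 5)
Your equivariant upgrade has a genuine gap at the averaging step. If $t\in F_\omega(A)$ satisfies $t^*t=e$ and $ft=t$ but is not invariant, then a F{\o}lner average $|L|^{-1}\sum_{g\in L}\tilde\alpha_{\omega,g}(t)$ destroys the first relation: the diagonal terms contribute only $|L|^{-1}e$, and the cross terms $\tilde\alpha_{\omega,g}(t)^*\tilde\alpha_{\omega,h}(t)$ are completely uncontrolled. A conditional expectation $E$ onto the fixed-point algebra does not ``transport'' a Cuntz subequivalence either, since $E(r)^*fE(r)$ bears no useful relation to $r^*fr$. The paper's resolution (following Sato) is to arrange \emph{orthogonality of the translates before averaging}: one first produces, for the outer part of the action, approximately central norm-one positive contractions $b_n$ with $\|b_n\alpha_g(b_n)\|\to 0$ (Proposition \ref{prop:small-Rt}), feeds these as the extra element $b$ into a strengthened excision lemma (Lemma \ref{lem:excision}), and thereby obtains $s$ satisfying not only $fs=s$, $s^*s=e$, $s^*xs=xe$ for $x\in A$, but additionally $s^*\tilde\alpha_{\omega,g}(s)=0$ for all $g$ acting outerly. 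Only then does the normalized sum $|L|^{-1/2}\sum_{g\Lambda\in L}\gamma_{g\Lambda}(\tilde s_0)$ over a F{\o}lner set preserve $s^*s=e$ \emph{exactly} (the cross terms vanish and $e$ is invariant), and F{\o}lner-ness gives approximate invariance, which the $\eps$-test converts to exact invariance. Your proposal contains no mechanism for producing this orthogonality, and without it the relations do not survive the limit even approximately in norm; a tracial-$2$-norm error is in any case insufficient, since the conclusion demands the exact identity $s^*s=e$ in $F_\omega(A)^{\tilde\alpha_\omega}$. There is also the subtlety, which you do not address, that the orthogonality trick is only available for group elements acting by \emph{outer} automorphisms, so one must first quotient by the normal subgroup $\Lambda$ of inner elements (on which the induced action on $F_\omega(A)$ is trivial) and average over $\Gamma/\Lambda$.

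A secondary issue: your non-equivariant core never uses nuclearity, yet it is an essential hypothesis. Very weak comparison produces an element $r\in A_\omega$ with $r^*fr\approx e$, but nothing forces $r$ to commute with $A$, so $r$ does not live in the central sequence algebra where the conclusion is stated. The passage from a mere comparison statement to an element of $F_\omega(A)$ satisfying $s^*xs=xe$ for all $x\in A$ is exactly the content of the excision-in-small-central-sequences lemma, which approximates the (nuclear) identity map of $A$ through pure states \`a la Lemma \ref{lem:nuclear-maps} and assembles $s$ from excising elements. ``The standard Matui--Sato fashion'' is doing all of this work silently in your sketch, and since the equivariant refinement of that same lemma (the extra element $b$) is also where amenability ultimately gets used, the two gaps are really one: the proof lives inside the excision lemma, not after it.
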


Unsurprisingly, the proof has to retrace a lot of Matui--Sato's previous arguments \cite{MatuiSato12acta} and adapt them to the non-unital context.
In order to obtain equivariant property (SI), our approach slightly deviates at the technical level from Sato's recent work \cite{Sato16}.
Namely, we observe a slightly stronger version of a known precursor commonly referred to as ``excision in small central sequences'' early on, which allows us to more directly perform a certain averaging argument from \cite{Sato16} to strengthen the ordinary property (SI) in $F_\omega(A)$ to the equivariant (SI) inside $F_\omega(A)^\Gamma$.
In particular this foregoes the more technical concept of property (TI).
In fact all of this works in a more general context where one considers the inclusion of a certain \cstar-algebra $A$ inside an ultraproduct $\CB_\omega=\prod_{n\to\omega} B_n$ of simple \cstar-algebras with strict comparison, and automatically gets relative versions of (equivariant) property (SI); see \autoref{thm:ordinary-SI} and \autoref{thm:equi-SI} for more details.

In the last section, we exhibit some non-trivial applications of \autoref{theorem-B}.
The first application verifies \autoref{conjecture-A} for \cstar-algebras with finitely many rays of extremal traces, and may be understood as an equivariant and non-unital version of Matui--Sato's main result in \cite{MatuiSato12acta}; see \autoref{thm:equivariant-Z-stability}:

\begin{theoremi} \label{theorem-C}
Let $A$ be a non-elementary separable simple nuclear \cstar-algebra with very weak comparison.
Suppose that $A$ is finite and has finitely many rays of extremal traces.
Let $\Gamma$ be a countable amenable group.
Then every action $\alpha:\Gamma\curvearrowright A$ is equivariantly $\CZ$-stable.
\end{theoremi}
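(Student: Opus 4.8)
The plan is to reduce \autoref{theorem-C} to \autoref{theorem-B} by producing, for every $k\geq 2$, a unital $*$-homomorphism from the prime dimension drop algebra $Z_{k,k+1}$ into the equivariant central sequence algebra $F_\omega(A)^\Gamma$. Once these are available for all $k$, a standard reindexing argument assembles them into a single unital $*$-homomorphism $\CZ\to F_\omega(A)^\Gamma$, using that $\CZ$ embeds unitally into an ultraproduct of the $Z_{k,k+1}$; by the equivariant McDuff-type criterion of \cite{Szabo18ssa} this is precisely what it takes for $\alpha$ to be equivariantly $\CZ$-stable.

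Following the template of \cite{MatuiSato12acta}, each such homomorphism is built from two ingredients: a \emph{tracially large} equivariant order zero map $\phi\colon M_k\to F_\omega(A)^\Gamma$ (the ``McDuff half'') and a complementary small isometry supplied by equivariant property (SI) (the ``(SI) half''). First I would construct the order zero map. Here the hypothesis that $A$ has finitely many rays of extremal traces is decisive: it forces the cone of traces to be finite-dimensional, so that after normalization the relevant tracial data reduces to finitely many uniform $2$-norms that can be handled simultaneously. Since $A$ is simple and nuclear, each tracial weak closure $\pi_\tau(A)''$ is a finite hyperfinite, hence McDuff, von Neumann algebra; this furnishes approximately central unital copies of $M_k$ in the relevant $2$-norms, which patch together over the finitely many extremal rays into an order zero map $M_k\to F_\omega(A)$ whose complement $1-\phi(1)$ is tracially negligible.

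To promote this non-equivariant map into the fixed point algebra $F_\omega(A)^\Gamma$ I would invoke amenability of $\Gamma$ through a F\o{}lner averaging argument in the spirit of \cite{Sato16}. Since $\Gamma$ permutes the finitely many extremal rays and scales the traces along them, averaging the image of $\phi$ over a sufficiently invariant finite subset of $\Gamma$ yields a map that is, in the ultraproduct limit, genuinely $\Gamma$-fixed while retaining tracial largeness; this is the non-unital, equivariant analog of Sato's ``large embeddability of cones''. I expect this to be the main technical obstacle: because the traces are unbounded, one must run the entire argument with the induced tracial states $x\mapsto\tau^\omega(xa)$ for suitable positive $a\in A$ rather than with a compact tracial state space, and the interplay between the finiteness of the extremal boundary and the F\o{}lner averaging must be controlled carefully in each of these $2$-seminorms at once.

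With the tracially large equivariant order zero map $\phi$ in hand, the final step is to apply \autoref{theorem-B}. Setting $e=1-\phi(1)$, which lies in the trace-kernel ideal, and $f=\phi(e_{11})$, which satisfies $\inf_{m\geq 1}\tau^\omega(af^m)>0$ because $\phi$ is order zero, equivariant property (SI) produces a small isometry $s\in F_\omega(A)^\Gamma$ with $s^*s=e$ and $\phi(e_{11})s=s$. The pair $(\phi,s)$ then satisfies the defining relations of $Z_{k,k+1}$ from R\o{}rdam--Winter's universal description \cite{RordamWinter10}, yielding the desired unital $*$-homomorphism $Z_{k,k+1}\to F_\omega(A)^\Gamma$. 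The remaining points---checking the order zero and normalizer relations and the bookkeeping in the reindexing---are routine and follow the well-known scheme of \cite{MatuiSato12acta}.
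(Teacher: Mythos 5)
Your overall skeleton is the right one and matches the paper: produce, for each $k\geq 2$, a tracially large equivariant c.p.c.\ order zero map $M_k\to F_\omega(A)^{\tilde{\alpha}_\omega}$, feed it together with the small isometry from equivariant property (SI) into R{\o}rdam--Winter's presentation of $Z_{k,k+1}$, and conclude via the equivariant McDuff criterion of \cite{Szabo18ssa}. The endgame (the choice $e=1-\phi(1_k)$, $f=\phi(e_{1,1})$, and the passage from all $Z_{k,k+1}$ to $\CZ$) is exactly as in \autoref{thm:equivariant-Z-stability}.

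The genuine gap is in the step you yourself flag as the main obstacle: promoting the order zero map into the fixed point algebra by ``F{\o}lner averaging the image of $\phi$''. An average $|F|^{-1}\sum_{g\in F}\tilde{\alpha}_{\omega,g}\circ\phi\circ\mathrm{Ad}(u_g)$ of order zero maps is completely positive but is in general no longer order zero, and nothing in your sketch restores the orthogonality/multiplicative structure after averaging; this is precisely the difficulty that forces Matui--Sato and Sato to work at the von Neumann algebra level rather than averaging at the \cstar-level. (Note also that in this paper the F{\o}lner average \emph{is} used, but only on the isometry $s$ in \autoref{thm:equi-SI}, where the relation $\tilde{s}_0^*\gamma_g(\tilde{s}_0)=0$ guarantees the average still satisfies $\tilde{s}_L^*\tilde{s}_L=e$; no analogous orthogonality is available for $\phi$.) The paper's actual route is: take the quasi-invariant unbounded trace $\tau$ built from the finitely many extremal rays, convert it to a state $\rho$ as in \autoref{rem:from-traces-to-states}, and use the \cstar-to-$\mathrm{W}^*$ ultrapower to obtain an equivariant surjection $(F_\omega(A),\tilde{\alpha}_\omega)\to(M^\omega\cap M',\alpha^\omega)$ with kernel $\CJ_A$, where $M=\pi_\tau(A)''$ is a finite direct sum of hyperfinite type II factors (\autoref{rem:trace-kernel}, \autoref{prop:technical-stuff}). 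The equivariance is then obtained for free from \autoref{prop:vN-stuff}\ref{prop:vN-stuff:1}: amenability of $\Gamma$ and the structure theory of hyperfinite factors show that the \emph{fixed point algebra} $(M^\omega\cap M')^{\alpha^\omega}$ is itself a finite direct sum of II$_1$ factors, so it contains a unital copy of $M_k$ outright. Finally, the fact that $\CJ_A$ is a $\Gamma$-$\sigma$-ideal (\autoref{rem:sigma-ideals}) supplies an \emph{equivariant} order zero lift $\phi:M_k\to F_\omega(A)^{\tilde{\alpha}_\omega}$ with $1-\phi(1_k)\in\CJ_A$. You would need to replace your averaging step with an argument of this kind (or otherwise justify why the averaged map can be corrected back to an order zero map) for the proof to go through.

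One smaller point: your justification that $\inf_m\tau^\omega_a(f^m)>0$ ``because $\phi$ is order zero'' should be made precise as in the paper --- since $e_{1,1}$ is a projection, $\phi(e_{1,1})^m-\phi(e_{1,1})\in\CJ_A$ for all $m$, and $\theta^\omega_a\circ\phi$ is a tracial state on $M_k$, hence equals the unique one and gives the value $1/k$.
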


We remark that the last section begins with a short proof of the fact from \cite{Szabo18kp} that amenable group actions on Kirchberg algebras are equivariantly $\CO_\infty$-stable, which utilizes a relative version of \autoref{theorem-B} with $A$ being traceless.
Regarding \autoref{theorem-C}, the other noteworthy ingredient next to \autoref{theorem-B} entering here is that we cannot directly define a tracial ultrapower of $A$ as in the unital case, and instead have to exploit Kirchberg's \cstar-to-$\mathrm{W}^*$ ultrapower construction \cite{Kirchberg94, AndoKirchberg16} to define a suitable notion of tracial central sequence algebra with respect to unbounded traces.
This is needed to relate the existence of a tracially full order zero map $M_k\to F_\omega(A)$ to the behavior of the weak closure of $A$ under the GNS representation of an unbounded trace, and is hence an important ingredient to access the structure theory of injective factors and group actions on them for tackling a \cstar-dynamical challenge.

For \autoref{theorem-C}, it may be useful to remark that the possibility to consider \cstar-algebras above with $A\cong A\otimes\CK$ allows us to observe that \autoref{theorem-C} holds verbatim for cocycle actions as well; see \autoref{rem:cocycle-actions}.
Our second application concerns strongly outer actions on \cstar-algebras $A$ like above.
We obtain a direct generalization of \cite[Theorem D]{Szabo18ssa4}, which implies that strongly outer actions of certain torsion-free elementary amenable groups automatically absorb a model action on the Jiang--Su algebra; see \autoref{cor:extending-ssa4}.
As a further application, we can deduce a generalization of a classification result of Nawata \cite{Nawata19} for strongly outer automorphisms on the (stabilized) Razak--Jacelon algebra \cite{Jacelon13}; see \autoref{cor:generalize-Nawata}:

\begin{theoremi}
Let $A$ be a separable simple nuclear finite $\CZ$-stable \cstar-algebra.
Suppose that $KK(A,A)=0$ and that $A$ has finitely many rays of extremal traces.
Then every strongly outer $\IZ$-action on $A$ has the Rokhlin property. 
Furthermore, two strongly outer actions $\alpha,\beta: \IZ\curvearrowright A$ are cocycle conjugate if and only if there is an affine homeomorphism $\kappa$ on $(T(A),\Sigma_A)$ such that $\kappa(\tau\circ\alpha)=\kappa(\tau)\circ\beta$ for all $\tau\in T(A)$.
\end{theoremi}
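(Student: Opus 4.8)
The plan is to establish the two assertions separately: deducing the Rokhlin property from strong outerness by combining the structure theory of injective factors with \autoref{theorem-B}, and then obtaining the classification by an Evans--Kishimoto-type intertwining argument that exploits the hypothesis $KK(A,A)=0$.

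For the Rokhlin property, I would first pass to the von Neumann algebra level. Since $A$ is separable simple nuclear and $\CZ$-stable, the weak closure of $A$ in the GNS representation of a suitably normalized densely defined lower semi-continuous trace $\tau$ is an injective $\mathrm{II}_1$ factor, and because $A$ has only finitely many rays of extremal traces the resulting finite injective von Neumann completion is well-controlled; on it the generator of a strongly outer $\IZ$-action acts freely, merely permuting the finitely many extremal summands. By the Connes--Ocneanu theory of group actions on injective factors \cite{Connes75, Ocneanu85}, an outer $\IZ$-action on such a factor has the von Neumann algebraic Rokhlin property. The crucial step is then to transport this approximate Rokhlin structure back to genuine Rokhlin projections inside the equivariant central sequence algebra $F_\omega(A)^\IZ$. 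This is precisely where \autoref{theorem-B} enters: equivariant property (SI) converts a tracially large, approximately $\alpha$-invariant Rokhlin partition into honest Rokhlin projections, by matching the trace-kernel defect against a tracially full complement. The tracial central sequence algebra attached to the unbounded trace via Kirchberg's \cstar-to-$\mathrm{W}^*$ ultrapower \cite{Kirchberg94, AndoKirchberg16} furnishes the bridge between the factor-level Rokhlin data and $F_\omega(A)$.

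For the classification, the ``only if'' direction is immediate, since a cocycle conjugacy between $\alpha$ and $\beta$ manifestly induces an affine homeomorphism of $(T(A),\Sigma_A)$ intertwining the dual actions. For the converse, suppose such a $\kappa$ is given. Because $A$ is classifiable and $KK(A,A)=0$, the affine homeomorphism $\kappa$ is realized by an honest automorphism $\theta$ of $A$; after replacing $\alpha$ by the cocycle-conjugate action $\theta\alpha\theta^{-1}$, I may assume that $\alpha$ and $\beta$ induce the same dynamics on traces. Now both $\alpha$ and $\beta$ have the Rokhlin property by the first half, both are equivariantly $\CZ$-stable by \autoref{theorem-C}, and $KK(A,A)=0$ annihilates every $K$-theoretic invariant, so the trace dynamics is the only remaining obstruction and it now agrees. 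An Evans--Kishimoto-type one-sided intertwining \cite{EvansKishimoto97, ElliottEvansKishimoto98}, in which the Rokhlin towers supply the room to absorb the successive approximate coboundaries, then produces an approximately central sequence of unitaries intertwining $\alpha$ and $\beta$; promoting this to a two-sided intertwining yields the desired cocycle conjugacy, exactly as in \cite{Nawata19, Szabo18ssa4}.

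The main obstacle I expect is the first half --- the passage from the von Neumann Rokhlin property to the \cstar-Rokhlin property in the genuinely non-unital, unbounded-trace regime. In the unital single-trace case this upgrade is governed by property (SI) in Kirchberg's central sequence algebra, but here one must work with the trace-kernel ideal defined through an unbounded trace and verify that the equivariant (SI) comparison of \autoref{theorem-B} applies to the Rokhlin defect \emph{uniformly} across the finitely many extremal rays simultaneously. Once this uniform upgrade is secured, the classification half is comparatively routine, following the established template of \cite{Nawata19}.
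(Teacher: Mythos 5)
There is a genuine gap in your first half. Equivariant property (SI) applied to a lifted von Neumann--Rokhlin partition produces, at best, towers of \emph{positive elements} (equivalently, c.p.c.\ order zero maps out of $\CC(\IZ/k\IZ)$) in $F_\omega(A)^{\tilde\alpha_\omega}$ whose defect is absorbed --- i.e., finite Rokhlin dimension --- not honest Rokhlin \emph{projections}. The genuine Rokhlin property carries a $K$-theoretic obstruction (Kishimoto: for a unique-trace algebra it forces divisibility of $K_0$), so no amount of (SI)-comparison can manufacture the required partition of unity by projections unless $A$ has the right $K$-theory. In your sketch the hypothesis $KK(A,A)=0$ is only invoked in the classification half, to realize $\kappa$ by an automorphism; but it is indispensable already for the Rokhlin half. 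The paper's route makes this explicit: $KK(A,A)=0$ together with finite nuclear dimension (\cite[Theorem A]{CastillejosEvington19}) and the classification theorem \cite{ElliottGongLinNiu17} gives $A\cong A\otimes\CQ$, and then \autoref{cor:extending-ssa4} (with $\CD=\CQ$, $\Gamma=\IZ$) shows that a strongly outer $\alpha$ absorbs \emph{every} $\IZ$-action on $\CQ$ up to cocycle conjugacy --- in particular a model automorphism of $\CQ$ with the Rokhlin property --- whence $\alpha$ inherits the Rokhlin property by absorption. Your von Neumann--level analysis and the $\sigma$-ideal/(SI) machinery are indeed the engine behind \autoref{cor:extending-ssa4} (via \autoref{lem:dimrokc-2} and \autoref{thm:extending-ssa4}), but the step from there to actual Rokhlin projections cannot be done ``by (SI)'' alone; you must first secure $\CQ$-stability.

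On the classification half: your Evans--Kishimoto intertwining outline is the traditional template and is not wrong in principle, but it amounts to reproving a substantial theorem; the paper instead quotes \cite[Theorem 5.12]{Szabo17R}, which reduces the entire classification statement to the assertion that strongly outer actions have the Rokhlin property. If you intend to carry out the intertwining yourself, you would still need to justify the existence of the automorphism $\theta$ realizing $\kappa$ (this again uses the classification theorem, hence finite nuclear dimension via \cite{CastillejosEvington19}) and to control the approximate coboundaries in the non-unital, unbounded-trace setting --- none of which is sketched. As written, the proposal does not constitute a proof.
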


The obvious question for further research becomes to what extent there is always a tracially large order zero embedding $M_k\to F_\omega(A)^\Gamma$ for an action $\Gamma\curvearrowright A$ on a classifiable \cstar-algebra, regardless of the precise structure of extremal traces of $A$.
Since this is a somewhat separate problem, it is beyond the scope of the present work, but it is tempting to speculate that equivariant versions of certain new techniques emerging in \cite{CETWW, CastillejosEvington19} may shed light on it and enable a satisfactory approach toward solving \autoref{conjecture-A} in the future.

I would like to thank the referee for making numerous suggestions that improved the readability of this article.


\section{Preliminaries}

Throughout the paper it will be assumed that the reader is familiar with the basic theory of weights on \cstar-algebras, including the generalized GNS construction; see \cite[II.6.7]{Blackadar}.
We follow the convention that a \emph{proper} weight is a non-zero densely defined and lower semi-continuous weight, and a \emph{trace} on a \cstar-algebra is assumed to be a weight satisfying the tracial condition; see \cite[Section 6.1]{Dixmier}.
We also assume familiarity with the cone of lower semi-continuous traces $T(A)$ on a \cstar-algebra $A$\footnote{In particular, the word ``trace'' may refer to possibly unbounded traces here.
Unlike most of the contemporary literature concerning unital \cstar-algebras, the expression $T(A)$ does not denote the tracial states, but all traces on $A$.}; see \cite{ElliottRobertSantiago11} for a detailed exposition.
We will often make use of the Pedersen ideal $\CP(A)$ of a \cstar-algebra $A$ and its basic properties \cite[Section 5.6]{Pedersen} without explicit reference.
Throughout we will say that a \cstar-algebra is non-elementary, if it is not isomorphic to the algebra of compact operators on some Hilbert space.
At least for the last section we assume that the reader is familiar with c.p.c.\ order zero maps between \cstar-algebras; see \cite{WinterZacharias09}.
We will throughout denote by $\omega$ a free ultrafilter on $\IN$.
When it comes to dealing with arguments related to ultraproducts, we will frequently use the standard technique called the ``$\eps$-test''; see \cite[Lemma 3.1]{KirchbergRordam14}.
The details of this will sometimes be omitted, but given in full in selected non-obvious instances.
When $a$ and $b$ are some elements in a \cstar-algebra, we will usually write $a=_\eps b$ as short-hand for $\|a-b\|\leq\eps$.
We will moreover use the short-hand expression that a mathematical statement holds ``for $\omega$-all $n$'' if the set of all $n\in\IN$ for which the statement is true belongs to $\omega$.

\begin{notation}
Let $A$ be a simple \cstar-algebra.
In the context of considering the tracial cone $T(A)$, we use the symbol $0$ for the zero trace, and the symbol $\infty$ for the trace taking the value $\infty$ on all non-zero positive elements; these are the trivial traces on $A$.
Recall that a non-trivial lower semi-continuous trace $\tau$ on $A$ is automatically a faithful proper trace, which we will use without further mention.
We consider the corresponding dimension function $d_\tau: A_+\to [0,\infty]$ given by $d_\tau(a)=\lim_{n\to\infty} \tau(f_n(a))$, where $f_n: [0,\infty)\to [0,1]$ is any pointwise-increasing sequence of continuous functions with $f_n(0)=0$ and $\lim_{n\to\infty} f_n(t)=1$ for all $t>0$. 

In this paper a compact generator $K$ of $T(A)$ is a compact subset $K\subset T(A)\setminus\set{0,\infty}$ such that $\IR^{>0}\cdot K = T(A)\setminus\set{0,\infty}$.\footnote{In particular $K$ may for example be a compact base of $T(A)$, but we make no demand on the obvious map $\IR^{>0}\times K\to T(A)\setminus\set{0,\infty}$ to be injective.}
Recall that a trace $\tau\in T(A)\setminus\set{0,\infty}$ is called extremal, if for all $\tau'\in T(A)$, $\tau'\leq\tau$ implies $\tau'=c\tau$ for some $c\leq 1$.
It is well-known that a proper trace $\tau$ is extremal precisely when its GNS representation is factorial, i.e., $\pi_\tau(A)''$ is a factor; see \cite[Theorem 6.7.3]{Dixmier}.
We say that $A$ has finitely many rays of extremal traces, if there are only finitely many extremal traces modulo scalar multiples. (The term ``ray'' refers to the set of all positive scalar multiples of a given trace.)
On the other hand, we call $A$ traceless, if there are no non-trivial traces on $A$, in which case we explicitly declare the empty set to be a compact generator of $T(A)$.
\end{notation}

\begin{definition}
A simple \cstar-algebra $A$ is said to have very weak comparison, if for any compact generator $K\subset T(A)$, the following is true.
For every $\eps>0$, there exists $\delta>0$ such that
whenever two non-zero positive elements $a,b\in \mathcal P(A)_+$ in the Pedersen ideal satisfy 
\[
\max_{\tau\in K} d_\tau(a)\leq\delta \quad\text{and}\quad \min_{\tau\in K} d_\tau(b) \geq \eps,
\]
then there exists an element $r\in A$ such that $\|r^*br-a\|\leq\eps$.
\end{definition}

\begin{remark} \label{rem:very-weak-comparison}
If $K_1$ and $K_2$ are both compact generators of $T(A)$, then clearly there is some constant $C>0$ such that $K_1\subseteq [C^{-1},C]\cdot K_2$ and $K_2\subseteq [C^{-1},C]\cdot K_1$.
This implies in particular that the property defined above does not depend on a choice of a compact generator.
In the special case where $A$ is unital, the canonical choice for $K$ is the set of all tracial states on $A$, and the definition above then turns out to be equivalent to the property singled out in \cite[Remark 2.6]{MatuiSato12acta}.
If $A$ is simple and traceless, then $A$ has very weak comparison if and only if $A$ is purely infinite, which in turn is the case if and only if $A$ has strict comparison in the sense below.
\end{remark}

\begin{definition}
A simple \cstar-algebra $A$ is said to have local strict comparison\footnote{The better-known property called strict comparison calls for the stabilization $A\otimes\CK$ to have local strict comparison.}, if the following is true.
Whenever two non-zero positive elements $a,b\in \mathcal P(A)_+$ in the Pedersen ideal satisfy $d_\tau(a) < d_\tau(b)$ for all $\tau\in T(A)\setminus\set{0,\infty}$, then it follows that $a\precsim b$, i.e., there is a sequence $r_n\in A$ with $r_n^*br_n\to a$.
\end{definition}

\begin{notation}[cf.\ \cite{Kirchberg04}]
For a specified sequence of \cstar-algebras $B_n$ and a free ultrafilter $\omega$ on $\IN$, we will denote by
\[
\CB_\omega = \prod_{n\in\IN} B_n / \set{ (b_n)_n \mid \lim_{n\to\omega} \|b_n\|=0 }
\]
their ultraproduct \cstar-algebra.
If $\Gamma$ is a discrete group and $\beta_n:\Gamma\curvearrowright B_n$ is a sequence of actions, we furthermore denote by $\beta_\omega:\Gamma\curvearrowright\CB_\omega$ the induced ultraproduct action.

For an inclusion of a (usually separable) \cstar-algebra $A\subset\CB_\omega$, we write
\[
\CB_\omega\cap A' = \set{ x\in\CB_\omega \mid [x,A]=0 },\quad \CB_\omega\cap A^\perp = \set{x\in\CB_\omega \mid xA=Ax=0},
\]
and define
\[
F(A,\CB_\omega) = (\CB_\omega\cap A')/(\CB_\omega\cap A^\perp).
\]
If furthermore $A$ is $\beta_\omega$-invariant, then so are $\CB_\omega\cap A'$ and $\CB_\omega\cap A^\perp$, and we get an induced action on the quotient $\tilde{\beta}_\omega:\Gamma\curvearrowright F(A,\CB_\omega)$.
In the special case where $B_n=A$ for all $n$ and the inclusion $A\subset A_\omega$ is the obvious one, we abbreviate $F_\omega(A)=F(A,A_\omega)$.
\end{notation}

\begin{definition}
We say that an inclusion of \cstar-algebras $A\subseteq B$ is simple, if $\overline{BaB}=\overline{BAB}$ for every non-zero element $a\in A$.
\end{definition}

\begin{remark} \label{rem:traces-simple-inclusion}
Let $A\subseteq B$ be a simple inclusion of \cstar-algebras.
If $\tau$ is a lower semi-continuous trace on $B$, then it is immediate that the restriction $\tau|_A$ is a non-trivial trace if and only if it is a proper faithful trace on $A$.
This follows from the fact that if $\tau|_A$ is non-trivial, then $0<\tau(a)<\infty$ for some $a\in\CP(A)_+$, but then $\tau$ must take values in $(0,\infty)$ on all of $\CP(A)_+\setminus\set{0}$ because every element in the latter set generates the same algebraic ideal (in $B$) as $a$; see \autoref{prop:trace-ineq} below for a related argument.
\end{remark}

\begin{remark}
Clearly an inclusion $A\subseteq B$ is simple if either $A$ or $B$ is simple.
It is also simple if the inclusion is full in the sense that every non-zero element in $A$ generates $B$ as an ideal.
However, an inclusion can be simple without being full:
an obvious example is an inclusion of the form $A\cong A\oplus 0\subseteq A\oplus A$ for a simple \cstar-algebra $A$.
Perhaps a bit less obvious, but pertinent to the main body of the paper, is the fact that the ultrapower $B_\omega$ of a \cstar-algebra $B$ cannot have any full elements if $B$ is simple, stable and admits a non-trivial trace.
Nevertheless the inclusion $B\subseteq B_\omega$ is simple.
\end{remark}


\section{Traces on ultraproducts and relative commutants}

\begin{definition} \label{def:generalized-limit-trace}
Let $B_n$ be a sequence of \cstar-algebras and $\omega$ a free ultrafilter on $\mathbb N$.
For a sequence $\tau_n$ of lower semi-continuous traces on $B_n$, we may define a lower semi-continuous trace $\tau^\omega: \Big( \prod_{n\in\mathbb N} B_n \Big)_+\to [0,\infty]$ via
\[
\tau^\omega( (b_n)_n ) = \sup_{\eps>0} \lim_{n\to\omega} \tau_n\big( (b_n-\eps)_+ \big),
\]
where $b_n\in A$ is a bounded sequence of positive elements.\footnote{See \cite[Lemma 3.1]{ElliottRobertSantiago11}, which implies that this map indeed yields a lower semi-continuous trace.}
We see that $\tau^\omega( (b_n)_n ) = 0$ whenever $\lim_{n\to\omega} \|b_n\| = 0$, so $\tau^\omega$ induces a lower semi-continuous trace on $\CB_\omega$, which we also denote by $\tau^\omega$ with slight abuse of notation.
A trace of this form on $\CB_\omega$ shall be called a generalized limit trace.
\end{definition}

\begin{notation}
For convenience, we will write $\tau^\eps(b)=\tau\big( (b-\eps)_+ \big)$ for a trace $\tau$ on a \cstar-algebra $B$, a positive element $b\in B$, and some $\eps>0$.
Under this notation, a generalized limit trace as above is of the form
\[
\tau^\omega( (b_n)_n ) = \sup_{\eps>0} \lim_{n\to\omega} \tau_n^\eps(b_n).
\]
\end{notation}

\begin{remark} \label{rem:traces-on-FA}
With the same assumptions as in \autoref{def:generalized-limit-trace}, let additionally $A\subseteq \CB_\omega$ be a \cstar-subalgebra.
Suppose that $\tau^\omega$ is a generalized limit trace on $\CB_\omega$.
Let $a\in A_+$ be a positive element.
The assignment $\tau^\omega_a: (\CB_\omega\cap A')_+\to [0,\infty]$, $x\mapsto\tau^\omega(xa)$ defines a tracial weight satisfying $\tau^\omega_a(x)\leq \tau^\omega(a)\|x\|$ for all $x\in (\CB_\omega\cap A')_+$.
By inserting as $x$ a positive contraction that acts as a unit on $a$, we see that there are three cases.
Firstly, this tracial weight is trivial when $\tau^\omega(a)=0$.
Secondly, we may have $\tau^\omega(a)=\infty$ and then the trace $\tau^\omega_a$ is unbounded.
Thirdly, if $0<\tau^\omega(a)<\infty$, then $\tau^\omega_a$ extends to a positive tracial functional on $\CB_\omega\cap A'$ with norm  $\|\tau^\omega_a\|=\tau^\omega(a)$.

Furthermore, it is clear that $\tau^\omega_a(x)=0$ for all $x\in \CB_\omega\cap A^\perp$.
This induces a trace on $F(A, \CB_\omega)$, which we again denote $\tau^\omega_a$, and which has the same norm if it is bounded.
\end{remark}

\begin{proposition} \label{prop:trace-ineq}
Let $B_n$ be a sequence of \cstar-algebras with ultraproduct $\CB_\omega$ and let $A\subset\CB_\omega$ be an inclusion of a separable simple \cstar-subalgebra.
Let $a,b\in\mathcal P(A)$ be two non-zero positive elements in the Pedersen ideal.
Then there exists a constant $C>0$ such that for every generalized limit trace $\tau^\omega$ on $\CB_\omega$, we have
\[
C^{-1}\tau^\omega_a \leq \tau^\omega_b \leq C\tau^\omega_a \quad\text{on } (\CB_\omega\cap A')_+.
\]
Consequently the same inequality holds on $F(A, \CB_\omega)_+$.
\end{proposition}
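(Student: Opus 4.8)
The plan is to reduce everything to a purely algebraic domination statement inside $A$ and then transport it through the trace. Concretely, I would first establish that there exist finitely many elements $s_1,\dots,s_n\in A$ with $a\leq\sum_{i=1}^n s_i^*bs_i$, and symmetrically finitely many $t_1,\dots,t_m\in A$ with $b\leq\sum_{j=1}^m t_j^*at_j$. Granting this, the constant $C=\max\set{\sum_i\|s_i\|^2,\ \sum_j\|t_j\|^2}$ will do the job, and --- this is the crucial point --- it depends only on $a$ and $b$ through the chosen elements, not on the trace $\tau^\omega$.

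For the domination statement I would exploit simplicity of $A$ as follows. The algebraic two-sided ideal generated by $b$ has norm-closure a non-zero closed two-sided ideal, hence all of $A$, so this algebraic ideal is dense; by minimality of the Pedersen ideal it therefore contains $\CP(A)$, and in particular $a$. Using a local unit for $b$ inside $\CP(A)$ to absorb scalars, one writes $a=\sum_k u_kbw_k$ with $u_k,w_k\in A$. A standard polarization (or $1\times m$ matrix amplification) argument then rewrites the self-adjoint element $a$ as a difference of two elements of the form $\sum_i z_i^*bz_i$; discarding the negative part yields the one-sided inequality $a\leq\sum_i s_i^*bs_i$ with $s_i\in A$. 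Swapping the roles of $a$ and $b$ gives the second inequality.

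For the trace estimate I fix $x\in(\CB_\omega\cap A')_+$ and use that $x$ commutes with every element of $A$, together with the identity $\tau^\omega(y^*y)=\tau^\omega(yy^*)$, which is valid for the possibly unbounded lower semi-continuous trace $\tau^\omega$. Applying it to $y=b^{1/2}s_ix^{1/2}$ gives $\tau^\omega(xs_i^*bs_i)=\tau^\omega\big(x^{1/2}b^{1/2}s_is_i^*b^{1/2}x^{1/2}\big)$, and from $b^{1/2}s_is_i^*b^{1/2}\leq\|s_i\|^2b$ together with monotonicity of $\tau^\omega$ one obtains $\tau^\omega(xs_i^*bs_i)\leq\|s_i\|^2\tau^\omega_b(x)$. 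Combining this with $a\leq\sum_i s_i^*bs_i$ (conjugated by $x^{1/2}$ and using additivity on positive sums) yields $\tau^\omega_a(x)\leq\big(\sum_i\|s_i\|^2\big)\tau^\omega_b(x)$; the symmetric estimate and the choice of $C$ then finish the inequality on $(\CB_\omega\cap A')_+$. Since both $\tau^\omega_a$ and $\tau^\omega_b$ vanish on $\CB_\omega\cap A^\perp$ (see \autoref{rem:traces-on-FA}), they descend to $F(A,\CB_\omega)$ and the inequality is inherited there.

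The main obstacle I anticipate is the second paragraph: turning the algebraic membership $a\in\CP(A)\subseteq AbA$ into a genuinely one-sided positive domination $a\leq\sum_i s_i^*bs_i$ with all $s_i$ lying in $A$ itself rather than in its unitization, and making sure that every trace manipulation is justified for an unbounded weight --- so that I only ever invoke $\tau^\omega(y^*y)=\tau^\omega(yy^*)$ and monotonicity, and never the cyclicity of a product of three or more unrelated factors. Everything else is routine.
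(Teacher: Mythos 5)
Your proof is correct and follows essentially the same route as the paper's: both reduce the statement to dominating each of $a,b$ by a finite sum of conjugates of the other by elements of $A$ (using that non-zero positive elements of $\CP(A)$ generate the same algebraic ideal in the simple algebra $A$), and then push this through $\tau^\omega$ via the trace identity $\tau^\omega(y^*y)=\tau^\omega(yy^*)$ and commutation with $\CB_\omega\cap A'$. The only difference is that the paper uses the exact identity $b=\sum_j x_j a x_j^*$ as a standard Pedersen-ideal fact, whereas you settle for the one-sided inequality $a\leq\sum_i s_i^* b s_i$ obtained by polarization and discarding the negative part --- a slightly more elementary and equally sufficient substitute yielding the same constant.
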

\begin{proof}
The two elements $a,b\in\CP(A)$ necessarily generate the same algebraic ideal. 
We may thus choose $M\geq 1$ and elements 
\[
x_1,\dots,x_M, y_1,\dots,y_M\in A
\] 
such that
\[
b=\sum_{j=1}^M x_jax_j^* \quad\text{and}\quad a=\sum_{j=1}^M y_jby_j^*.
\]
We have for any $z\in (\CB_\omega\cap A')_+$ that
\[
\begin{array}{ccl}
\tau^\omega_b(z) &=& \tau^\omega(bz) \\
&=& \dst \sum_{j=1}^M \tau^\omega(x_jax_j^*z) \\
&=& \dst \sum_{j=1}^M \tau^\omega(a^{1/2}x_j^*x_ja^{1/2}z) \\
&\leq& \dst \tau^\omega_a(z)\cdot\Big(\sum_{j=1}^M \|x_j\|^2\Big).
\end{array}
\]
Exchanging the roles of $a$ and $b$ we get the analogous inequality
\[
\tau_a^\omega(z)\leq\tau_b^\omega(z)\cdot\Big(\sum_{j=1}^M \|y_j\|^2\Big).
\]
Hence the constant $C=\max\set{\sum_{j=1}^M \|x_j\|^2, \sum_{j=1}^M \|y_j\|^2}$ does the trick.
\end{proof}

\begin{definition} \label{def:null-full}
Let $B_n$ be a sequence of simple \cstar-algebras with ultraproduct $\CB_\omega$, and let $A\subset \CB_\omega$ be a simple inclusion of a non-zero separable \cstar-subalgebra.
\begin{enumerate}[label=\textup{(\roman*)},leftmargin=*]
\item \label{def:null-full:1} 
We say that a positive contraction $f\in \CB_\omega\cap A'$ or $f\in F(A, \CB_\omega)$ is tracially supported at 1, if either one of the following is true:
Either $B_n$ is traceless for $\omega$-all $n$, in which case we demand $\|fa\|=\|a\|$ for all $a\in A_+$.
Or, $B_n$ has non-trivial traces for $\omega$-all $n$, in which case we demand the following:
For every non-zero positive element $a\in\CP(A)$, there exists a constant $\kappa=\kappa(f,a)>0$ such that for every generalized limit trace $\tau^\omega$ with $\tau^\omega|_A$ non-trivial, one has $\displaystyle \inf_{k\in\mathbb N} \tau^\omega_a(f^k) \geq \kappa\tau^\omega(a)$.
\item \label{def:null-full:2} 
We say that a positive element $e\in \CB_\omega\cap A'$ or $e\in F(A, \CB_\omega)$ is tracially null, if $\tau^\omega_a(e)=0$ for every positive element $a\in\CP(A)$ and every generalized limit trace $\tau^\omega$ on $\CB_\omega$ with $\tau^\omega(a)<\infty$.
\end{enumerate}
In light of \autoref{rem:traces-simple-inclusion}, the condition that $\tau^\omega|_A$ be non-trivial is equivalent to $0<\tau^\omega(a)<\infty$, regardless of the choice of $a$.
Due to \autoref{prop:trace-ineq}, if we ask for $A$ to be simple, then it suffices to check either one of the conditions above for an arbitrary single element $a\in\CP(A)_+\setminus\set{0}$.

\end{definition}

\begin{remark}
The above term ``tracially supported at 1'' stems from the fact that every tracial state of the form $\tau^\omega_a$ on $\CB_\omega\cap A'$ in particular restricts to a positive linear functional on $\cstar(f)$.
By virtue of functional calculus, the latter can be identified with the continuous functions on the spectrum of $f$ vanishing at zero, with $f$ representing the identity function.
Since the restriction of $\tau^\omega_a$ to this \cstar-algebra must come from a finite regular Borel measure on the spectrum of $f$, the condition $\inf_{k\geq 1} \tau_a^\omega(f^k)>0$ means that such a measure is supported on the subset $\set{1}$ inside the spectrum of $f$.
In the above definition, we ask that this happens uniformly, meaning that all measures on the spectrum of $f$ coming from all possible restrictions of tracial states of the form $\tau^\omega_a$ take a value on $\set{1}$ that is uniformly bounded below from a positive constant.
\end{remark}

The next two definitions have their origin in \cite{Sato10, MatuiSato12, MatuiSato12acta, MatuiSato14}:

\begin{definition} \label{def:property-SI}
Let $B_n$ be a sequence of simple \cstar-algebras with ultraproduct $\CB_\omega$.
Suppose that $\Gamma$ is a countable discrete group and $\beta_n: \Gamma\curvearrowright B_n$ is a sequence of actions giving rise to the ultraproduct action $\beta_\omega: \Gamma\curvearrowright\CB_\omega$.
Let $A\subset\CB_\omega$ be a simple inclusion of a (non-zero) separable and $\beta_\omega$-invariant \cstar-subalgebra.
We say that the inclusion $A \subset \CB_\omega$ has equivariant property (SI) relative to $\beta_\omega$ if the following holds:

Whenever $e,f\in F(A,\CB_\omega)^{\tilde{\beta}_\omega}$ are two positive contractions  such that $f$ is tracially supported at 1 and $e$ is tracially null, there exists a contraction $s\in F(A,\CB_\omega)^{\tilde{\beta}_\omega}$ with $fs=s$ and $s^*s=e$.

In particular, we define the inclusion $A\subset\CB_\omega$ to have property (SI) if the above holds for $\Gamma=\set{1}$.
\end{definition}

\begin{notation}
More specifically, we say that a separable simple \cstar-algebra $A$ has property (SI) relative to an action $\alpha: \Gamma\curvearrowright A$, if the canonical inclusion $A\subset A_\omega$ has property (SI) relative to the ultrapower action $\alpha_\omega$.
If moreover $\Gamma=\set{1}$, we just say that $A$ has property (SI); cf.\ \cite[Definition 4.1]{MatuiSato12acta}.
\end{notation}

\begin{remark}
From this point onward, a number of results to come generalize but closely follow Matui--Sato's original approach towards proving property (SI) for simple nuclear \cstar-algebras with strict comparison.
For those familiar with their work we shall briefly summarize the parallels regarding the key arguments here:
\begin{itemize}[leftmargin=*]
\item \autoref{prop:trace-ineq} corresponds to \cite[Lemma 2.4]{MatuiSato12acta}.
\item \autoref{lemma:null-full-mod} corresponds to \cite[Lemma 2.3]{MatuiSato12acta}.
\item \autoref{lem:subequivalence} corresponds to \cite[Lemma 2.5]{MatuiSato12acta}.
\item \autoref{lem:excision} is a slightly more general version of \cite[Section 3]{MatuiSato12acta}.
\item \autoref{thm:ordinary-SI} corresponds to the beginning of \cite[Section 4]{MatuiSato12acta}.
At the level of proof we closely follow \cite[Section 4]{BBSTWW}.
\item \autoref{thm:equi-SI} generalizes \cite[Proposition 5.1]{Sato16}.
\item \autoref{thm:equivariant-Z-stability} generalizes \cite[Theorem 4.9]{MatuiSato14}.
\end{itemize}
We may in a few places also use ideas originating in \cite{Sato12, TomsWhiteWinter15, KirchbergRordam14, BBSTWW}.
\end{remark}

The following technical lemma is exclusively interesting in the non-unital case, as will be explained in the remark after it.
In a nutshell, it will tell us that for a simple inclusion $A\subset\CB_\omega$ as in \autoref{def:null-full}, one can isolate some sequence of compact generators $K_n\subset T(B_n)$ such that, up to scalar multiple, all generalized limit traces $\tau^\omega$ with $\tau^\omega|_A$ non-trivial arise from a sequence $\tau_n\in K_n$.
This will in turn make it possible later on to apply the $\eps$-test within crucial steps in the main proofs, in a similar fashion as has been done in the known cases.

\begin{lemma} \label{lem:compact-bases}
Let $B_n$ be a sequence of simple \cstar-algebras with ultraproduct $\CB_\omega$.
Let $A\subset \CB_\omega$ be a simple inclusion of a separable \cstar-subalgebra.
Suppose that $a\in \mathcal P(A)$ is a positive element in the Pedersen ideal with norm one.
\begin{enumerate}[label=\textup{(\roman*)},leftmargin=*]
\item  \label{lem:compact-bases:1}
There exists a sequence $a_n\in \mathcal P(B_n)$ of positive elements in the Pedersen ideal  representing $a$ and a natural number $m=m_a$ such that

\[
\lim_{n\to\omega} \tau_n(a_n)\leq m\tau^\omega\big( (a-\delta)_+ )
\] 
for all $0\leq\delta<1/2$ and for every sequence $\tau_n$ of traces on $B_n$ giving rise to the generalized limit trace $\tau^\omega$.
\end{enumerate}
Furthermore, there exists a sequence of compact generators $K_n\subset T(B_n)$ with the following properties:
\begin{enumerate}[label=\textup{(\roman*)},leftmargin=*,resume]
\item  \label{lem:compact-bases:2}
Whenever a sequence $\tau_n\in T(B_n)$ of traces gives rise to a generalized limit trace $\tau^\omega$ with $\tau^\omega(a)=1$, then $\tau_n\in K_n$ for $\omega$-all $n\in\mathbb N$.
\item  \label{lem:compact-bases:3}
Every sequence $\tau_n\in K_n$ gives rise to a generalized limit trace $\tau^\omega$ with $\frac{1}{m+1}\leq \tau^\omega(a)\leq m+1$.
\item \label{lem:compact-bases:4}
Whenever $b\in\CP(A)$ is a non-zero positive element and $\tau^\omega$ is a generalized limit trace with $0<\tau^\omega(b)<\infty$, there is a sequence $\theta_n\in K_n$ such that the associated generalized limit trace $\theta^\omega$ on $\CB_\omega$ is a scalar multiple of $\tau^\omega$.
Moreover there exists a constant $C\geq 1$ such that for every such generalized limit trace $\theta^\omega$, one has $C^{-1}\leq\theta^\omega(b)\leq C$.
\end{enumerate}
\end{lemma}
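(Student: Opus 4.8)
The plan is to first produce in \ref{lem:compact-bases:1} a representative of $a$ whose \emph{entire} trace is controlled by the trace of its ``bulk'', and then to extract \ref{lem:compact-bases:2}--\ref{lem:compact-bases:4} from a single two-sided comparison between $\lim_{n\to\omega}\tau_n(a_n)$ and $\tau^\omega(a)$. For \ref{lem:compact-bases:1}, since the inclusion $A\subset\CB_\omega$ is simple and $a,(a-1/2)_+\in\CP(A)$ are both non-zero, these elements generate the same algebraic ideal, so exactly as in the proof of \autoref{prop:trace-ineq} I fix $x_1,\dots,x_M\in A$ with $a=\sum_{j=1}^M x_j(a-1/2)_+x_j^*$. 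Lifting $a$ to a bounded positive representative $(a_n')_n$ and each $x_j$ to $(x_{j,n})_n$, I set
\[
a_n=\sum_{j=1}^M x_{j,n}(a_n'-1/2)_+x_{j,n}^*,
\]
which represents $a$ by construction. Since $(a_n'-1/2)_+$ is of the form $(c-\eps)_+$ it lies in $\CP(B_n)$, and as $\CP(B_n)$ is an ideal we get $a_n\in\CP(B_n)$. The point of this representative is that for any trace $\tau_n$ the tracial manipulations of \autoref{prop:trace-ineq} give $\tau_n(a_n)\le\big(\sum_j\|x_{j,n}\|^2\big)\,\tau_n\big((a_n'-1/2)_+\big)$, while for $\delta<1/2$ one has $\lim_{n\to\omega}\tau_n((a_n'-1/2)_+)\le\tau^\omega((a-\delta)_+)$ (take $\eps=1/2-\delta$ in the supremum defining $\tau^\omega$, using that $(a_n'-\delta)_+$ represents $(a-\delta)_+$). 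Taking $\omega$-limits and putting $m=m_a:=\big\lceil\sum_j\|x_j\|^2\big\rceil$ yields \ref{lem:compact-bases:1}.

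The engine for the rest is the estimate
\[
\tau^\omega(a)\ \le\ \lim_{n\to\omega}\tau_n(a_n)\ \le\ m\,\tau^\omega(a),
\]
valid for every $\tau_n\in T(B_n)$ giving rise to $\tau^\omega$: the left inequality is immediate from $(a_n-\eps)_+\le a_n$, and the right one is \ref{lem:compact-bases:1} combined with $\tau^\omega((a-\delta)_+)\le\tau^\omega(a)$. I would then define, for the $\omega$-almost all $n$ with $a_n\neq 0$,
\[
K_n=\Big\{\tau\in T(B_n)\ \Big|\ \tfrac{m}{m+1}\le\tau(a_n)\le m+1\Big\},
\]
choosing any compact generator on the remaining negligible set of indices. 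That $K_n$ is a compact generator follows because every non-trivial trace on the simple algebra $B_n$ is faithful and finite on $\CP(B_n)$, hence takes a value in $(0,\infty)$ on $a_n$ and rescales into $\{\tau(a_n)=1\}\subset K_n$; compactness of $K_n$ reduces, via continuity of scaling in the cone, to compactness of the base $\{\tau(a_n)=1\}$, which is the standard fact from \cite{ElliottRobertSantiago11} that traces normalized at a non-zero Pedersen-ideal element form a compact set. Feeding the displayed estimate into the band gives \ref{lem:compact-bases:2} and \ref{lem:compact-bases:3} at once: if $\tau^\omega(a)=1$ then $\lim_\omega\tau_n(a_n)\in[1,m]\subset(\tfrac m{m+1},m+1)$, so $\tau_n\in K_n$ for $\omega$-all $n$; conversely $\tau_n\in K_n$ forces $\tau^\omega(a)\le\lim_\omega\tau_n(a_n)\le m+1$ and $\tau^\omega(a)\ge\tfrac1m\lim_\omega\tau_n(a_n)\ge\tfrac1m\cdot\tfrac m{m+1}=\tfrac1{m+1}$. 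The asymmetric endpoints $\tfrac m{m+1}$ and $m+1$ are precisely what make both assertions hold simultaneously.

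For \ref{lem:compact-bases:4}, given $b$ and $\tau^\omega$ with $0<\tau^\omega(b)<\infty$, \autoref{rem:traces-simple-inclusion} makes $\tau^\omega|_A$ proper and faithful, so $0<\tau^\omega(a)<\infty$, and the two-sided estimate gives $0<\tau_n(a_n)<\infty$ for $\omega$-all $n$. Rescaling $\theta_n:=\tau_n(a_n)^{-1}\tau_n$ produces $\theta_n\in K_n$ with $\theta_n(a_n)=1$, and since $\tau_n(a_n)^{-1}$ converges along $\omega$ to a finite positive scalar, the associated limit trace $\theta^\omega$ is a positive scalar multiple of $\tau^\omega$. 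The uniform bound $C^{-1}\le\theta^\omega(b)\le C$ then comes from running the ideal-decomposition argument of \autoref{prop:trace-ineq} at the level of values: writing $b=\sum_j x_jax_j^*$ and $a=\sum_j y_jby_j^*$ bounds $\theta^\omega(b)$ above and below by fixed multiples of $\theta^\omega(a)$, and the latter lies in $[\tfrac1{m+1},m+1]$ by \ref{lem:compact-bases:3}.

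The only genuinely delicate step I anticipate is the choice of representative in \ref{lem:compact-bases:1}: the whole scheme hinges on replacing an arbitrary lift of $a$, whose trace may detect a great deal of mass accumulating near $0$ that $\tau^\omega$ discards, by the conjugated sum $a_n$, for which the full value $\tau_n(a_n)$ is pinned to the bulk value $\tau_n((a_n'-1/2)_+)$; this is where simplicity of the inclusion and the ideal structure of the Pedersen ideal are essential. The compactness of the normalized trace sets is the other external ingredient, but it is available off the shelf from \cite{ElliottRobertSantiago11}.
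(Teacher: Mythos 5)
Your proof is correct and follows essentially the same route as the paper: the same conjugated representative $a_n=\sum_j x_{j,n}(a_n'-\tfrac12)_+x_{j,n}^*$, the same band $K_n=\{\tau\mid \tfrac{m}{m+1}\le\tau(a_n)\le m+1\}$, and the same two-sided comparison between $\lim_{n\to\omega}\tau_n(a_n)$ and $\tau^\omega(a)$ driving \ref{lem:compact-bases:2}--\ref{lem:compact-bases:4}. The only deviations are cosmetic (taking $m=\lceil\sum_j\|x_j\|^2\rceil$ rather than normalizing the $x_j$ to contractions, and spelling out the rescaling argument for \ref{lem:compact-bases:4} that the paper leaves implicit).
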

\begin{proof}
\ref{lem:compact-bases:1}:
Let $a_n'\in B_n$ be any sequence of positive contractions representing $a$.
Without loss of generality let us assume that $\|a_n'\|=1$.
As the inclusion $A\subset\CB_\omega$ is simple and we have also assumed $a\in\mathcal P(A)$, it follows that $a$ and $(a-\frac12)_+$ generate the same algebraic ideal in $\CB_\omega$.
Choose contractions $x_1,\dots,x_m\in \CB_\omega$ such that $a=\sum_{j=1}^m x_j(a-\frac12)_+x_j^*$.
Choose sequences of contractions $x_{j,n}\in B_n$ representing $x_j$ for $j=1,\dots,m$.
Set $a_n=\sum_{j=1}^m x_{j,n}(a_n'-\frac12)_+x_{j,n}^* \in \CP(B_n)$, which is another bounded sequence of positive elements representing $a$.
We claim that $m_a=m$ does the trick.

Indeed, for every sequence of traces $\tau_n\in T(B_n)$ one has
\[
\begin{array}{ccl}
\dst \lim_{n\to\omega} \tau_n(a_n) &=& \dst \lim_{n\to\omega} \sum_{j=1}^m \tau_n(x_{j,n}(a_n'-\frac12)_+x_{j,n}^*) \\
&\leq& \dst m\lim_{n\to\omega} \tau_n( (a_n'-\frac12)_+ ) \\
&\leq& \dst m\tau^\omega((a-\delta)_+)
\end{array}
\]
for all $0\leq\delta<1/2$.

For the next part of the statement, we set
\[
K_n = \big\{ \tau\in T(B_n) \mid \frac{m}{m+1} \leq \tau( a_n ) \leq m+1 \big\}
\]
We claim that this sequence does the job.
As each $B_n$ is simple and $a_n$ is in the Pedersen ideal, clearly $K_n\subset T(B_n)$ is a compact generator; cf.\ \cite[Proposition 3.11]{ElliottRobertSantiago11}.

\ref{lem:compact-bases:2}:
Suppose that a sequence $\tau_n\in T(B_n)$ of traces induces a generalized limit trace with $\tau^\omega(a)=1$.
Then by part \ref{lem:compact-bases:1} we have 
\[
1=\tau^\omega(a) = \sup_{\eps>0} \lim_{n\to\omega} \tau_n( (a_n-\eps)_+ ) \leq \lim_{n\to\omega} \tau_n(a_n) \leq m\tau^\omega(a) = m. 
\]
So indeed $\tau_n\in K_n$ for $\omega$-all $n$.

\ref{lem:compact-bases:3}:
For every sequence $\tau_n\in K_n$ and associated generalized limit trace $\tau^\omega$ on $\CB_\omega$, one has
\[
\tau^\omega(a) \leq \lim_{n\to\omega} \tau_n(a_n) \leq m+1
\]
and
\[
\tau^\omega(a) \geq \frac1m \lim_{n\to\omega} \tau_n(a_n) \geq \frac{1}{m+1}.
\]

\ref{lem:compact-bases:4}:
Since we assumed the inclusion $A\subset\CB_\omega$ to be simple, the elements $a,b\in\CP(A)$ generate the same algebraic ideal in $\CB_\omega$.
Hence the claim follows directly from \ref{lem:compact-bases:2} and \ref{lem:compact-bases:3}, with the same argument as in the proof of \autoref{prop:trace-ineq}.
\end{proof}

\begin{remark} \label{rem:constant-bases}
In the lemma above, if we consider the special case where $A$ is simple and $A\subset A_\omega$ is the canonical inclusion into its ultrapower, then we can specify any compact generator $K\subset T(A)$, which will result in the statement \autoref{lem:compact-bases}\ref{lem:compact-bases:4} to hold for $K_n=K$ for all $n\geq 1$.
If $A$ for example happens to be unital, the canonical choice for $K$ would be the set of all tracial states, which would work for the choice of $a=1$ in \autoref{lem:compact-bases}.
\end{remark}

\section{Excision in small central sequences}

\begin{lemma} \label{lemma:null-full-mod}
Let $B_n$ be a sequence of simple \cstar-algebras with ultraproduct $\CB_\omega$.
Let $K_n\subseteq T(B_n)\setminus\set{0,\infty}$ be a sequence of compact generators.
Let $a\in \CB_\omega$ be a positive element of norm one.
\begin{enumerate}[label=\textup{(\roman*)}, leftmargin=*]
\item \label{lemma:null-full-mod:1}
Suppose that there is a constant $\kappa>0$ such that $\tau^\omega(a^k)\geq\kappa$ for every $k\geq 1$ and every generalized limit trace $\tau^\omega$ associated to a sequence $\tau_n\in K_n$.
Then there exist sequences $b_n, c_n\in B_n$ of positive norm one elements satisfying
\[
a=[(b_n)_n],\quad \lim_{n\to\omega} \min_{\tau\in K_n} \tau(c_n) \geq \kappa,
\]
and $b_nc_n=c_n$ for all $n\in\mathbb N$.
\item \label{lemma:null-full-mod:2}
Suppose that for every generalized limit trace $\tau^\omega$ associated to a sequence $\tau_n\in K_n$, one has $\tau^\omega(a)=0$.
Then there exists a sequence of positive contractions $b_n\in B_n$ representing $a$ such that
\[
\lim_{n\to\omega}\max_{\tau\in K_n} d_\tau(b_n)=0.
\]
\end{enumerate}
\end{lemma}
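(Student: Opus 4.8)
The plan is to handle the two parts separately, in each case first proving a \emph{uniform} (over the compact generators $K_n$) spectral estimate and then feeding suitable functional-calculus elements into the $\eps$-test \cite[Lemma 3.1]{KirchbergRordam14}. Throughout I fix a representative $a=[(a_n)_n]$ with each $a_n\in (B_n)_+$ a positive contraction; for the first part I additionally arrange $\|a_n\|=1$ for $\omega$-all $n$, so that $1\in\mathrm{sp}(a_n)$, and I write $\mu_{\tau,n}$ for the spectral distribution of $a_n$ under a trace $\tau$, i.e.\ $\tau(h(a_n))=\int h\,d\mu_{\tau,n}$.

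For part (ii) the first step is the uniform vanishing statement: for every $\delta>0$ one has $\dst\lim_{n\to\omega}\sup_{\tau\in K_n}\tau^\delta(a_n)=0$. This follows by contradiction from the hypothesis, since if the limit were positive (or infinite) along an $\omega$-set one could pick $\tau_n\in K_n$ with $\tau_n^\delta(a_n)$ bounded below by a positive constant, and the associated generalized limit trace would satisfy $\tau^\omega(a)\geq\lim_{n\to\omega}\tau_n^\delta(a_n)>0$, contradicting $\tau^\omega(a)=0$. The second step is the standard inequality $d_\tau\big((a_n-\delta)_+\big)\leq\tfrac{2}{\delta}\,\tau^{\delta/2}(a_n)$, valid for all $\tau$ and $\delta>0$. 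Putting the elements $s_n=(a_n-\delta)_+$ (which satisfy $\|s_n-a_n\|\leq\delta$) into the $\eps$-test, with test functions $\|s_n-a_n\|$ and $\max_{\tau\in K_n}d_\tau(s_n)$, and combining the two steps, produces a representative $b_n$ of $a$ with $\dst\lim_{n\to\omega}\max_{\tau\in K_n}d_\tau(b_n)=0$.

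For part (i) I construct $b_n,c_n$ by functional calculus: for small $\eta>0$ set $b_n=f_\eta(a_n)$ with $f_\eta(t)=\min\set{1,t/(1-\eta)}$, so that $\|b_n-a_n\|\leq\eta$, $\|b_n\|=1$ and $f_\eta\equiv 1$ on $[1-\eta,1]$; and set $c_n=g_\eta(a_n)$ with $0\leq g_\eta\leq 1$ supported in $[1-\eta,1]$ and equal to $1$ on $[1-\eta/2,1]$. Then $\|c_n\|=1$, and since $g_\eta$ is supported where $f_\eta\equiv 1$ one gets the exact relation $b_nc_n=c_n$; moreover $c_n\geq\chi_{(1-\eta/2,1]}(a_n)$ yields $\tau(c_n)\geq d_\tau\big((a_n-(1-\eta/2))_+\big)$ for every $\tau$. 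Thus everything reduces to the lower bound
\[
\liminf_{n\to\omega}\,\min_{\tau\in K_n} d_\tau\big((a_n-s)_+\big)\ \geq\ \kappa\qquad\text{for every } s\in(0,1),
\]
after which a last application of the $\eps$-test (letting $\eta\to 0$, so $s=1-\eta/2\to 1$, and using the freedom in the representative to recover $a=[(b_n)_n]$ exactly) gives the assertion.

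The heart of the matter, and the step I expect to be the main obstacle, is this last lower bound. The useful direction is $d_\tau\big((a_n-s)_+\big)\geq\tfrac{1}{1-s^k}\tau\big((a_n^k-s^k)_+\big)$, obtained by dominating the continuous function $(t^k-s^k)_+/(1-s^k)$ by $\chi_{(s,1]}$; the matching upper estimate is that, for every $\delta>0$, $\tau\big((a_n^k-\delta)_+\big)\leq d_\tau\big((a_n-s)_+\big)+\int_{(0,s]}t^k\,d\mu_{\tau,n}$, whence $\tau^\omega(a^k)\leq\lim_{n\to\omega}\big(d_{\tau_n}((a_n-s)_+)+\int_{(0,s]}t^k\,d\mu_{\tau_n,n}\big)$. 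In the unital/bounded setting of Matui--Sato this is immediate, because each $\mu_{\tau,n}$ is a probability measure and the low-spectrum term is at most $s^k\to 0$; in the present non-unital context these measures are unbounded near $0$, and making $\int_{(0,s]}t^k\,d\mu_{\tau,n}$ negligible for large $k$, uniformly over $\tau\in K_n$, is exactly where the finiteness of the relevant generalized limit traces on a fixed power of $a$ must be exploited (bounding the low-spectrum term by $s^{k-k_0}\tau^\omega(a^{k_0})$). Granting this, one concludes by contradiction just as in part (ii): a sequence $\tau_n\in K_n$ nearly minimizing $d_{\tau_n}((a_n-s)_+)$ with limit strictly below $\kappa$ would, for $k$ large, force $\tau^\omega(a^k)<\kappa$, contradicting the hypothesis.
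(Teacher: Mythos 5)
Your part (ii) is complete and is essentially the paper's own argument: the same uniform vanishing statement $\lim_{n\to\omega}\max_{\tau\in K_n}\tau^{\delta}(a_n)=0$ extracted by contradiction from the hypothesis, followed by the Chebyshev-type estimate $d_\tau\big((a_n-\delta)_+\big)\leq\tfrac{2}{\delta}\tau^{\delta/2}(a_n)$ and one application of the $\eps$-test; the paper's bookkeeping with $\delta_{n,\ell}=\sqrt{\max_{\tau}\tau^{\eps_\ell}(a_n)}$ is just a quantitative variant of your step. Your setup for part (i) (the functions $f_\eta,g_\eta$, the reduction to a uniform lower bound on $d_\tau\big((a_n-s)_+\big)$ for $s$ close to $1$, and the final $\eps$-test) likewise mirrors the paper's choice of $h_\delta,g_\delta$.

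However, the step you yourself flag as ``the main obstacle'' is a genuine gap: you never establish the lower bound $\lim_{n\to\omega}\min_{\tau\in K_n}d_\tau\big((a_n-s)_+\big)\geq\kappa$, only describe what would be needed, namely that $\int_{(0,s]}t^k\,d\mu_{\tau_n,n}$ be killed for large $k$ via $s^{k-k_0}\tau^\omega(a^{k_0})$ with $\tau^\omega(a^{k_0})<\infty$. Your instinct that some finiteness must be imported is exactly right, and you should note that it is \emph{not} contained in the stated hypotheses: nothing rules out $\tau^\omega(a^k)=\infty$ for all $k$ (take $B_n=\CK$, $K_n=\set{\mathrm{Tr}}$, and $a_n$ with eigenvalue $1$ of multiplicity one and eigenvalue $\tfrac12$ of multiplicity $n$), in which case the hypothesis of (i) holds for every $\kappa$ while any admissible $(b_n,c_n)$ has $\lim_{n\to\omega}\tau(c_n)\leq 1$; so the estimate you need cannot be derived from the stated assumptions alone. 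The paper's proof compresses this point into one line: it fixes a single generalized limit trace $\tau^\omega$, writes its restriction to $\cstar(a)$ as integration against a measure $\mu$ on the pointed spectrum, and asserts that $\inf_k\int t^k\,d\mu\geq\kappa$ implies $\mu(\set{1})\geq\kappa$ ``by monotone convergence'' --- but since $t^k$ \emph{decreases} to $\chi_{\set{1}}$, this also tacitly requires $\int t^{k_0}\,d\mu<\infty$ for some $k_0$. In every application (e.g.\ in \autoref{lem:subequivalence}, where the element fed in is $a_0f$ with $a_0\in\CP(A)$ and $K_n$ is chosen via \autoref{lem:compact-bases}\ref{lem:compact-bases:4}) one has $\tau^\omega(a)\leq C<\infty$ uniformly over the relevant traces, so the lemma is only ever invoked where the finiteness is available. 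To close your argument you must either add this uniform finiteness as a hypothesis or verify it in context; granting it, your chain of estimates does go through, and the paper's packaging (first $\tau^\omega(g(a))\geq\kappa$ for every norm-one positive $g$ with $g(1)=1$ for each fixed $\tau^\omega$, then the $\eps$-test over $\prod_nK_n$ to get the uniform statement) is an equivalent route.
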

\begin{proof}
Throughout the proof, let $a_n\in B_n$ be a sequence of positive elements with norm one representing $a$. 

\ref{lemma:null-full-mod:1}:
Let $\tau^\omega$ be a generalized limit trace arising from a sequence $\tau_n\in K_n$.
Then $\tau^\omega$ restricts to a lower semi-continuous trace on 
\[
\cstar(a)\cong\set{ h\in\CC(\sigma(a)) \mid h(0)=0 },
\] 
where it corresponds to forming the integral with respect to a $[0,\infty]$-valued regular Borel measure on the (pointed) spectrum of $a$.
By assumption, we have
\[
\inf_{k\in\mathbb N} \tau^\omega(a^k) \geq \kappa,
\]
which by the monotone convergence theorem implies that $\tau^\omega \geq \kappa\cdot\operatorname{ev}_1$.

It follows that for any positive function $g\in \mathcal C_0(0,1]$ with norm one and $g(1)=1$, we have $\tau^\omega(g(a))\geq\kappa$.
Since the choice of the sequence $\tau_n$ was arbitrary, it follows that\footnote{Here we implicitely use the $\eps$-test \cite[Lemma 3.1]{KirchbergRordam14} on the product $\prod_{n\in\IN} K_n$ with respect to the functions $f_{n}^{(m)}: K_n\to [0,1]$, $n,m\geq 1$, defined by $f_n^{(m)}(\tau)=\tau^{1/m}(g(a_n))$.
If the supremum in the claim were less than $\kappa$, we would be able to find a sequence $\tau_n\in K_n$ for which the associated generalized limit trace $\tau^\omega$ satisfies $\tau^\omega(g(a))<\kappa$, a contradiction.}
\[
\sup_{\eps>0} \lim_{n\to\omega} \min_{\tau\in K_n} \tau^\eps(g(a_n)) \geq \kappa.
\]
For $\delta>0$, we let $g_\delta, h_\delta: [0,1]\to [0,1]$ be the functions defined via
\[
h_\delta(t)= \begin{cases} (1-\delta)^{-1}t &,\quad t\leq 1-\delta \\ 1 &,\quad 1-\delta\leq t, \end{cases}
\]
and
\[
g_\delta(t) = \begin{cases} 0 &,\quad t\leq 1-\delta \\ \frac 2\delta (t-\delta) &,\quad 1-\delta \leq t \leq 1-\delta/2 \\ 1 &,\quad 1-\delta/2\leq t. \end{cases}
\]
If we define $b_n^\delta = h_\delta(a_n)$ and $c_n^\delta=g_\delta(a_n)$, then evidently $b_n^\delta c_n^\delta=c_n^\delta$.
Furthermore we have
\[
\lim_{n\to\omega} \min_{\tau\in K_n} \tau( c_n^\delta ) \geq \sup_{\eps>0} \lim_{n\to\omega} \min_{\tau\in K_n} \tau^\eps( c_n^\delta ) \geq \kappa.
\]
by the above, and clearly $\|a_n-b_n^\delta\|\leq \frac{\delta}{1-\delta} \stackrel{\delta\to 0}{\longrightarrow} 0$.
Once we let $\delta\to 0$, the existence of the desired sequences $b_n,c_n\in B_n$ follows from the $\eps$-test \cite[Lemma 3.1]{KirchbergRordam14} as follows.
We set
\[
X_n = \set{ (b,c)\in B_n\times B_n \mid 0 \leq b,c\leq 1,\ \|c\|=1,\ bc=c},\quad n\in\IN.
\]
Consider $X=\prod_{n\in\IN} X_n$ and the functions $f^{(m)}_n: X_n\to [0,\infty]$ defined by $f^{(1)}_n(b,c)=\|a_n-b\|$ and
\[
f^{(1+m)}_n(b,c) = \begin{cases} 0 &,\quad \min_{\tau\in K_n} \tau(c) \geq \kappa-\frac1m \\ 1 &,\quad\text{else}. \end{cases}
\]
For every $\eps>0$, let us consider the sequence $x=(b_n^\delta,c_n^\delta)_{n\in\IN}\in X$ for some $\delta>0$ with $\frac{\delta}{1-\delta}\leq\eps$.
Then $f^{(1)}_\omega(x)=\lim_{n\to\omega} \|a_n-b_n^\delta\|\leq\frac{\delta}{1-\delta}\leq\eps$.
Furthermore, we have $\lim_{n\to\omega}\min_{\tau\in K_n}\tau(c_n^\delta)\geq\kappa$, which for every $m\geq 1$ implies that $\min_{\tau\in K_n} \tau(c_n^\delta)\geq \kappa-\frac1m$ for $\omega$-all $n$.
Thus $f^{(1+m)}_\omega(x)=\lim_{n\to\omega} f^{(1+m)}_n(b_n^\delta,c_n^\delta)=0$ for all $m\geq 1$. 
Applying the $\eps$-test we may get a sequence $x=(b_n,c_n)_{n\in\IN}\in X$ such that
\[
0=f^{(1)}_\omega(x)=\lim_{n\to\omega} \|a_n-b_n\|
\]
and for all $m\geq 1$, one has
\[
0=f^{(1+m)}_\omega(x)=\lim_{n\to\omega} f^{(1+m)}_n(b_n,c_n).
\]
By definition of the functions $f^{(1+m)}_n$, the latter condition translates to $\min_{\tau\in K_n} \tau(c_n)\geq\kappa-\frac1m$ for $\omega$-all $n$.
Since $m\geq 1$ is arbitrary, this means $\lim_{n\to\omega}\min_{\tau\in K_n} \tau(c_n)\geq\kappa$.

\ref{lemma:null-full-mod:2}:
By exploiting the $\eps$-test once more as in the previous footnote, our assumption implies $\dst \lim_{n\to\omega} \max_{\tau\in K_n} \tau^{\eps}(a_n)=0$ for all $\eps>0$.
Let $\eps_\ell>0$ be a monotone null sequence.
We set 
\[
\delta_{n,\ell} = \sqrt{ \max_{\tau\in K_n} \tau^{\eps_\ell}(a_n) },
\] 
which converges to zero as $n\to\omega$, for all $\ell\geq 1$.
By functional calculus, we observe for $b_{n,\ell}=(a_n-(\eps_\ell+\delta_{n,\ell}))_+ \leq a_n$ that
\[
\delta_{n,\ell}(b_{n,\ell}+\frac1m)^{-1}b_{n,\ell} \leq (a_n-\eps_\ell)_+,\quad m\in\mathbb N
\]
and hence
\[
\begin{array}{ccl}
\displaystyle \max_{\tau\in K_n} d_\tau(b_{n,\ell}) &=& \displaystyle \max_{\tau\in K_n} \sup_{m\in\mathbb N} \tau( (b_{n,\ell}+\frac1m)^{-1}b_{n,\ell} ) \\
&\leq& \displaystyle \max_{\tau\in K_n} \delta_{n,\ell}^{-1}\tau^{\eps_\ell}(a_n) \\
&\leq& \displaystyle \delta_{n,\ell} \ \stackrel{n\to\omega}{\longrightarrow} \ 0.
\end{array}
\]
Clearly we also have $\lim_{n\to\omega} \|a_n-b_{n,\ell}\|\leq \eps_\ell$.
The existence of the desired sequence $b_n$ follows directly from applying the $\eps$-test.
\end{proof}

\begin{lemma} \label{lem:subequivalence}
Let $B_n$ be a sequence of simple \cstar-algebras with local strict comparison.
Let $\CB_\omega$ be their ultraproduct, and let $A\subset\CB_\omega$ be a simple inclusion of a non-zero separable \cstar-subalgebra.
Let $e,f\in \CB_\omega\cap A'$ be two positive contractions so that $f$ is tracially supported at 1 and $e$ is tracially null.
Let $a\in A_+$ be a positive element of norm one.
Then there exists a contraction $r\in \CB_\omega$ with
\[
ar=r,\ fr=r, \quad\text{and}\quad r^*r - e \in \CB_\omega\cap A^\perp.
\]
\end{lemma}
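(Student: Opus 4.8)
The plan is to transplant Matui--Sato's proof of \cite[Lemma 2.5]{MatuiSato12acta} to the non-unital setting by performing the decisive Cuntz comparison inside each fibre $B_n$ and then reassembling the witness $r$ with the $\eps$-test. I first dispose of the traceless case. If $B_n$ is traceless for $\omega$-all $n$, then local strict comparison forces each such $B_n$ to be purely infinite, every positive element of $\CB_\omega$ is automatically tracially null, and ``tracially supported at $1$'' reduces to $\|fa\|=\|a\|$ for all $a\in A_+$. In this regime $f$ attains norm one on the hereditary subalgebra of $\CB_\omega$ cut down by $a$, and $r$ is produced directly from the comparison theory of purely infinite simple \cstar-algebras. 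So assume henceforth that $B_n$ has non-trivial traces for $\omega$-all $n$. Fix a reference element $a_0\in\CP(A)_+$ of norm one, apply \autoref{lem:compact-bases} to obtain compact generators $K_n\subseteq T(B_n)$ capturing up to scalar every generalized limit trace $\tau^\omega$ with $\tau^\omega|_A$ non-trivial, and fix positive contractions $a_n,e_n,f_n\in B_n$ representing $a,e,f$ with $\|a_n\|=1$. By the remark after \autoref{def:null-full} the hypotheses on $e$ and $f$ may be verified against $a_0$.

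Next I extract the two quantitative inputs. Since $f$ is tracially supported at $1$, there is $\kappa>0$ with $\tau^\omega_{a_0}(f^k)\geq\kappa$ for all $k\geq1$ and all $\tau^\omega$ arising from $K_n$; viewing $\tau^\omega_{a_0}$ on $\cstar(f)$ as a measure on the spectrum, this says the mass at the point $1$ is at least $\kappa$. Running the monotone-convergence and $\eps$-test argument of \autoref{lemma:null-full-mod}\ref{lemma:null-full-mod:1} in this $a_0$-weighted form furnishes, for each $\delta>0$, a positive contraction $c=g_\delta(f)$ with $fc=c$ whose $a_0$-weighted trace stays $\geq\kappa$. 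On the other side, tracial nullity of $e$ means $\tau^\omega(a^{1/2}ea^{1/2})=0$ for every $a\in\CP(A)_+$ and every $\tau^\omega$ from $K_n$; feeding the element $a^{1/2}ea^{1/2}$ into \autoref{lemma:null-full-mod}\ref{lemma:null-full-mod:2} therefore realises the negligibility of each $A$-corner of $e$ as a genuine vanishing of dimension functions, $\lim_{n\to\omega}\max_{\tau\in K_n}d_\tau(a_n^{1/2}e_na_n^{1/2})=0$, and by a diagonal $\eps$-test the same representative works simultaneously for a fixed countable dense subset of $\CP(A)$.

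With these inputs in hand I perform the construction fibrewise. Exploiting that $f$ commutes with $a$ in $\CB_\omega$ (as $f\in A'$), I build for a cut-off $\delta=\delta_n\to0$ a positive contraction $c_n'$ sitting jointly under $g_\delta(f_n)$ and under the spectral part of $a_n$ near $1$, so that $f_nc_n'=c_n'$ and $a_nc_n'=c_n'$ hold with error vanishing as $n\to\omega$. The heart of the matter is to arrange, using local strict comparison in $B_n$, a subequivalence whose induced $r_n\in B_n$ has left support $r_nr_n^*\in\overline{c_n'B_nc_n'}$ --- forcing $f_nr_n\approx r_n$ and $a_nr_n\approx r_n$, since $f_n$ and $a_n$ act as units on that hereditary subalgebra --- while the right support $r_n^*r_n$ agrees with $e_n$ tested against every element of the fixed dense sequence up to an error tending to $0$. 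A concluding $\eps$-test over the product of the pertinent constraint sets then produces a contraction $r=[(r_n)_n]\in\CB_\omega$ with $fr=r$, $ar=r$ and $(r^*r-e)A=A(r^*r-e)=0$, i.e.\ $r^*r-e\in\CB_\omega\cap A^\perp$.

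The main obstacle is this fibrewise balancing. Two features pull against each other: the relation $ar=r$ confines the left support $r_nr_n^*$ to the spectral part of $a_n$ near $1$, which may carry arbitrarily little trace because $a$ need not have appreciable spectral mass at $1$; whereas $r_n^*r_n$ must reproduce $e_n$ when tested against $A$. The reconciliation rests on the fact that $e$ is tracially \emph{null} rather than merely of small trace, which through \autoref{lemma:null-full-mod}\ref{lemma:null-full-mod:2} drives the dimension functions of all $A$-corners $a_n^{1/2}e_na_n^{1/2}$ below any prescribed rate; a diagonal choice of $\delta_n\to0$ matched against that rate, together with the fact that the discrepancy between $r_n^*r_n$ and $e_n$ is absorbed into the annihilator $A^\perp$, secures both support relations and the congruence $r^*r\equiv e$ simultaneously. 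Setting up this double limit --- over the dense subset of $A$ and over the trace constraints in $K_n$ --- so that a single $\eps$-test delivers $r$ is the principal technical point; once it is in place, the traceless case and the weighted form of \autoref{lemma:null-full-mod}\ref{lemma:null-full-mod:1} are routine.
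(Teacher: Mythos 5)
Your overall strategy coincides with the paper's: represent a corner of $e$ by a sequence with vanishing dimension functions (\autoref{lemma:null-full-mod}\ref{lemma:null-full-mod:2}), represent a spectral chunk of $f$ localized near the top of $a$ by a sequence whose traces are bounded below (\autoref{lemma:null-full-mod}\ref{lemma:null-full-mod:1}), run local strict comparison fibrewise in the $B_n$, and reassemble the witness with the $\eps$-test. The handling of the traceless case and of the congruence $r^*r\equiv e$ modulo $\CB_\omega\cap A^\perp$ is also consistent with the paper (which uses a single $h\in\CP(A)_+$ with $hx\approx x$ on $\CF$, so that $r^*r=he$ exactly, rather than a dense sequence).

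There is, however, a genuine gap precisely at the step you single out as the main obstacle, and your proposed reconciliation does not close it. To run strict comparison you need, for $\omega$-all $n$, the inequality $d_\tau(d_n)<d_\tau(c_n')$ for \emph{all} densely defined traces on $B_n$; since $d_\tau(d_n)\to 0$ only supplies an upper bound, this forces a \emph{positive lower bound} $\min_{\tau\in K_n}\tau(c_n')\geq\kappa_\delta>0$ on the traces of the joint element sitting under $g_\delta(f_n)$ and under the spectral part of $a_n$ near $1$. The bound you extract, $\tau^\omega_{a_0}(f^k)\geq\kappa$ for a fixed reference element $a_0$, says nothing about this joint element: it does not localize any tracial mass of $f$ over the spectral top of $a$. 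The missing move --- and the actual crux of the paper's argument --- is to choose $a_0,a_1\in\CP(A)_+$ by functional calculus in $a$ with $\|a-a_0\|\leq\delta$ and $a_0a_1=a_1$, and to test the condition of \autoref{def:null-full}\ref{def:null-full:1} against $a_1$ (legitimate because $A$ is simple, via \autoref{prop:trace-ineq}); together with \autoref{lem:compact-bases}\ref{lem:compact-bases:4} this yields $\tau^\omega(f^ka_1)\geq\kappa_\delta>0$, hence $\tau^\omega\big((fa_0)^k\big)\geq\kappa_\delta$ since $(fa_0)^k=f^ka_0^k\geq f^ka_1$. One then applies \autoref{lemma:null-full-mod}\ref{lemma:null-full-mod:1} to the single commuting product $a_0f$, producing $b_n,c_n$ with $b_nc_n=c_n$ and $\min_{\tau\in K_n}\tau(c_n)\geq\kappa_\delta$ eventually, so that $r_n=c_n^{1/2}q_n$ automatically satisfies $b_nr_n=r_n$ and hence $far\approx r$ in one stroke. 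Your appeal to the tracial nullity of $e$ and a diagonal choice of $\delta_n$ only controls the small side of the comparison; without the $\delta$-dependent positive constant $\kappa_\delta$ supplied by simplicity of $A$, the rate-matching has nothing to match against and the comparison step cannot be carried out. Note also that by working at fixed $\delta$ and $\CF$ and deferring all limits to a single $\eps$-test at the end, as the paper does, the diagonalization over $\delta_n$ becomes unnecessary.
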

\begin{proof}
Since the elements $f$ and $a$ commute by assumption, we note that $ar=r$ and $fr=r$ hold together precisely when $far=r$.
Let $\delta>0$ and $\mathcal F\subset A$ be a self-adjoint finite set.
In order to show the statement, it suffices to find a contraction $r\in \CB_\omega$ with $\|far-r\|\leq \delta$ and $\|(r^*r-e)x\|\leq \delta$ for all $x\in\mathcal F$.

Let $K_n\subset T(B_n)$ be a sequence of compact generators satisfying the property in \autoref{lem:compact-bases}\ref{lem:compact-bases:4}.
By an analogous functional calculus argument as in the proof of \autoref{lemma:null-full-mod}, we may find two non-zero positive elements $a_0,a_1\in\mathcal P(A)_+$ of norm one in the Pedersen ideal such that $\|a-a_0\|\leq\delta$ and $a_0a_1=a_1$.
We may furthermore choose an element $h\in \mathcal P(A)_+$ such that $\|hx-x\|\leq\delta$ for all $x\in \mathcal F$.

Let $e_n,f_n\in B_n$ be sequences of positive contractions representing $e$ and $f$, respectively.
Let $a_{0,n}, h_n\in B_n$ be sequences of positive contractions representing $a_0$ and $h$, respectively.

Since $f$ is tracially supported at 1 and the compact generators $K_n\subset T(B_n)$ satisfy the property in \autoref{lem:compact-bases}\ref{lem:compact-bases:4}, there is a constant $\kappa>0$ such that for every generalized limit trace $\tau^\omega$ on $\CB_\omega$ arising from a sequence $\tau_n\in K_n$ and every $k\geq 1$, we have $\tau^\omega(f^ka_1)\geq\kappa$.
In particular, one also has $\kappa\leq \tau^\omega( (fa_0)^k )$ for all $k\geq 1$.
For the special case that $B_n$ is traceless for $\omega$-all $n$, let us also point out that the norm of $fa_0$ is one by assumption.

Using \autoref{lemma:null-full-mod}\ref{lemma:null-full-mod:1} on the element $a_0f\in\CB_\omega$, we may choose sequences of positive elements $b_n, c_n\in B_n$ of norm one, with $b_nc_n=c_n$ and
\[
\lim_{n\to\omega} \| a_{0,n}^{1/2} f_n a_{0,n}^{1/2} - b_n\| =0,\quad  \lim_{n\to\omega} \min_{\tau\in K_n} \tau(c_n) \geq \kappa.
\]
By assumption, $e$ is tracially null in $\CB_\omega\cap A'$.
In particular, we get that for every generalized limit trace $\tau^\omega$ arising from a sequence $\tau_n\in K_n$, we have $\tau^\omega(eh)=0$.
By using \autoref{lemma:null-full-mod}\ref{lemma:null-full-mod:2} on the element $he\in\CB_\omega$, there is hence a sequence of positive contractions $d_n\in B_n$ with
\[
\lim_{n\to\omega} \|h_n^{1/2} e_n h_n^{1/2} - d_n\| = 0,\quad \lim_{n\to\omega} \max_{\tau\in K_n} d_\tau(d_n)=0.
\]
In summary we conclude that
\[
\max_{\tau\in K_n} d_\tau(d_n) < \kappa/2 \quad\text{and}\quad \min_{\tau\in K_n} d_\tau(c_n) \geq \min_{\tau\in K_n} \tau(c_n) > \kappa/2 
\]
for $\omega$-all $n$.
Since by construction, $K_n\subset T(B_n)$ is a compact generator for every $n$, it follows that for $\omega$-all $n$, the inequality $d_\tau(d_n)<d_\tau(c_n)$ holds for all densely defined traces $\tau\in T(B_n)$.

Since we assumed that every \cstar-algebra $B_n$ has local strict comparison, we conclude that $d_n \precsim c_n$ for $\omega$-all $n$.
This allows us to find a (not necessarily bounded) sequence $q_n\in B_n$ such that
\[
\lim_{n\to\omega} \| q_n^*c_nq_n - d_n \| = 0.
\]
The elements $r_n=c_n^{1/2}q_n$ thus define a sequence with $\lim_{n\to\omega}\|r_n\|\leq 1$, inducing a contraction $r\in \CB_\omega$.
Since clearly $r^*r=he$, it follows due to the choice of $h$ that
\[
(r^*r)x = ehx =_{\delta} ex,\quad x\in\CF.
\]
We also observe
\[
b_n r_n =  b_n c_n^{1/2} q_n = c_n^{1/2} q_n = r_n
\]
for all $n$, and hence
\[
far =_{\delta} fa_0r = [ (b_n r_n) ] = [ (r_n) ] = r. 
\]
This finishes the proof.
\end{proof}

\begin{remark} \label{rem:vwc}
We notice that in the above proof, we actually have the stronger statement $\max_{\tau\in K_n} d_\tau(d_n)<\delta$ for $\omega$-all $n$, where $\delta>0$ is an arbitrary constant.
In the situation where $A=B_n$ for all $n$ and $A\subset A_\omega$ is the canonical inclusion, we can choose $K=K_n$ for all $n$; see \autoref{rem:constant-bases}.
In this special case, if we assume only that $A$ has very weak comparison, then this still gives us a sequence $q_n\in B_n$ as above to carry out the rest of the proof to obtain the statement of \autoref{lem:subequivalence}.
\end{remark}

\begin{notation}[cf.\ {\cite[Section 4]{BBSTWW}}] \label{nota:theta-A}
For a given \cstar-algebra $A$, we can choose a representative for every unitary equivalence class of irreducible representations of $A$, and denote by $\theta_A$ the direct sum representation.
Below it will be relevant to consider the case where $\theta_A$ is essential, i.e., the intersection of its range with the compact operators is trivial.
In the special case where $A$ is simple, this is equivalent to saying that $A$ is non-elementary.
\end{notation}

\begin{lemma} \label{lem:nuclear-maps}
Let $A$ be a separable \cstar-algebra such that $\theta_A$ is essential.
Let $B$ be a \cstar-algebra and $\phi: A\to B$ a nuclear completely positive map.
For every $\eps>0$ and finite set $\CF\subset A$, there exist natural numbers $N,L$, pairwise inequivalent pure states $\lambda_1,\dots,\lambda_L$ on $A$ and elements $c_i\in B, d_{i,l}\in A$ for $i=1,\dots,N$ and $l=1,\dots,L$ such that
\[
\phi(x) =_\eps \sum_{l=1}^L\sum_{i,j=1}^N \lambda_l(d_{i,l}^*xd_{j,l})c_i^*c_j
\]
for all $x\in\CF$.
\end{lemma}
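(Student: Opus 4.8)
The plan is to combine the standard approximate factorisation of a nuclear map through a matrix algebra with a disintegration of the resulting map $A\to M_N$ into pure pieces, and to use the essentiality of $\theta_A$ only at the very end, to make the resulting pure states pairwise inequivalent.

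First I would invoke nuclearity to produce, for the given $\eps$ and $\CF$, contractive c.p.\ maps $\psi: A\to M_k$ and $\eta: M_k\to B$ with $\|\phi(x)-\eta(\psi(x))\|\leq\eps/3$ for $x\in\CF$. The map $\eta$ is determined by the positive matrix $(\eta(e_{ij}))_{ij}\in M_k(B)_+$; writing this as $W^*W$ with $W=(w_{\mu i})\in M_k(B)$ (take $W=P^{1/2}$ via functional calculus) gives $\eta(T)=\sum_\mu\sum_{ij}T_{ij}w_{\mu i}^*w_{\mu j}$. Absorbing the index $\mu$ into an enlarged matrix size and passing to the block-diagonal amplification $\Psi(x)_{(\mu,i),(\nu,j)}=\delta_{\mu\nu}\psi(x)_{ij}$, I may assume after relabelling that $\eta\circ\psi$ has the form $x\mapsto\sum_{ij}\psi(x)_{ij}c_i^*c_j$ for a c.p.\ map $\psi: A\to M_N$ and elements $c_1,\dots,c_N\in B$. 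Thus it suffices to approximate $\psi$, in point-norm on $\CF$, by a map $x\mapsto\big(\sum_{l}\lambda_l(d_{i,l}^*xd_{j,l})\big)_{ij}$ with $\lambda_l$ pairwise inequivalent pure states: multiplying the error by $\sum_i\|c_i\|^2$ and choosing the approximation fine enough then yields total error $\leq\eps$.

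For this main step I would regard $\psi$ as the positive functional $s$ on $M_N(A)\cong A\otimes M_N$ given by $s\big((x_{ij})_{ij}\big)=\sum_{ij}\psi(x_{ij})_{ij}$, and approximate $s$ weak-$*$ on the finite set $\{x\otimes e_{ij}: x\in\CF\}$ by a finite convex combination $\sum_{l=1}^L t_l\mu_l$ of pure states of $A\otimes M_N$ (Krein--Milman; this controls the finitely many matrix entries because $M_N$ is finite dimensional). Since every irreducible representation of $A\otimes M_N$ has the form $\sigma_l\otimes\id_{M_N}$ for an irreducible representation $\sigma_l$ of $A$, writing each $\mu_l$ as a vector state along such a representation and unravelling the tensor leg produces vectors $\xi_{i,l}\in H_{\sigma_l}$ (after absorbing $\sqrt{t_l}$) with $\psi(x)_{ij}\approx\sum_l\langle\xi_{i,l},\sigma_l(x)\xi_{j,l}\rangle$ on $\CF$.

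The genuinely delicate point --- and the place where essentiality of $\theta_A$ enters --- is to arrange that the $\sigma_l$ be pairwise inequivalent. Here I would first note that essentiality of $\theta_A$ forces $A$ to have infinitely many inequivalent irreducible representations: if $A$ were type I with only finitely many classes, some class would be separated in $\hat A$ and would contribute compact operators to the range of $\theta_A$, contradicting essentiality, while if $A$ is antiliminal it has uncountably many classes by Glimm's theorem. Fixing fresh, pairwise distinct classes $[\sigma_1'],\dots,[\sigma_L']$ (each necessarily essential), I would use Glimm's Lemma --- applicable precisely because these representations are essential, so their vector states are weak-$*$ dense --- to replace each piece defining $\psi_l$ by a nearby one realised by a vector state of $\sigma_l'$, keeping the finitely many matrix entries within tolerance on $\CF$. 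Finally, for each $l$ I would apply Kadison transitivity to the irreducible $\sigma_l'$ with cyclic vector $\Omega_l$ to obtain $d_{i,l}\in A$ with $\sigma_l'(d_{i,l})\Omega_l=\xi_{i,l}$, whence $\langle\xi_{i,l},\sigma_l'(x)\xi_{j,l}\rangle=\lambda_l(d_{i,l}^*xd_{j,l})$ for the pairwise inequivalent pure states $\lambda_l:=\langle\Omega_l,\sigma_l'(\cdot)\Omega_l\rangle$. Combining the three approximations gives the statement; I expect the control of inequivalence --- perturbing each pure piece into a distinct irreducible class via Glimm's Lemma while preserving the matrix coefficients on $\CF$ --- to be the main technical obstacle.
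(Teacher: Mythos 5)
Your overall architecture (nuclear factorisation through $M_k$, rewriting the map $M_k\to B$ via its Choi matrix as $T\mapsto\sum_{ij}T_{ij}c_i^*c_j$, and disintegrating the resulting positive functional on $M_N(A)$ into pure pieces followed by Kadison transitivity) is exactly that of the proof the paper defers to, namely \cite[Lemma 4.8]{BBSTWW}, and those steps are fine. The genuine gap is in the step where you force the irreducible classes to be pairwise inequivalent. You propose to fix ``fresh'' classes $[\sigma_1'],\dots,[\sigma_L']$, assert that each is ``necessarily essential'', and then apply Glimm's Lemma to each $\sigma_l'$ separately to move the pure piece $\mu_l$ into the class $[\sigma_l']$. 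This fails for the lemma as stated, for two reasons. First, essentiality of $\theta_A$ does \emph{not} imply that any individual irreducible representation is essential: for $A=\CC[0,1]$ (or $\CC[0,1]\otimes\CK$) the representation $\theta_A$ is essential, yet every irreducible representation has compact (indeed finite-rank or all-of-$\CK$) image, so Glimm's Lemma is not applicable to any single one of them, and the vector states of a fixed $\sigma_l'$ are not weak-$*$ dense. Second, even when $\sigma_l'$ happens to be essential it may have a nontrivial kernel on which $\mu_l$ does not vanish, and then $\mu_l$ cannot be approximated by vector states of $\sigma_l'$ at all (again $\CC[0,1]$: you cannot approximate $\operatorname{ev}_{1/2}$ by vector states of $\operatorname{ev}_{1/3}$). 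Your claim only becomes correct when $A$ is simple and non-elementary, which covers the paper's applications but not the lemma as stated.

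The repair, which is how the hypothesis on $\theta_A$ is actually meant to be used, is to apply Glimm's Lemma \emph{once} to the single representation $\theta_A\otimes\id_{M_N}$ of $M_N(A)$ on $H_{\theta_A}\otimes\IC^N$: this representation is faithful and essential precisely because $\theta_A$ is, so the whole functional $s$ (suitably normalised) is a weak-$*$ limit of its vector states. A single approximating vector $\xi\in\bigoplus_\lambda H_\lambda\otimes\IC^N$ can then be truncated to finitely many summands, and since these summands are pairwise orthogonal, invariant, and carry pairwise inequivalent irreducible representations by construction of $\theta_A$, the vector state decomposes automatically as $\sum_{l=1}^L\langle\xi_{i,l},\sigma_l(x)\xi_{j,l}\rangle$ with the $\sigma_l$ pairwise inequivalent. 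There is no need to move any state between inequivalent classes, and your final Kadison transitivity step then applies verbatim to each $\sigma_l$.
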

\begin{proof}
This is proved in \cite[Lemma 4.8]{BBSTWW}.
Even though the statement there assumes that $A=B$ is unital and that $\phi$ is the identity map, neither one of those assumptions enters in the proof, so we may conclude the more general statement above.
\end{proof}

\begin{lemma} \label{lem:excision}
Let $B_n$ be a sequence of simple \cstar-algebras with local strict comparison.
Let $\CB_\omega$ be their ultraproduct, and let $A\subset\CB_\omega$ be a simple inclusion of a non-zero separable \cstar-subalgebra.
Suppose that the representation $\theta_A$ is essential.
Let $e,f\in \CB_\omega\cap A'$ be two positive contractions such that $e$ is tracially null and $f$ is tracially supported at 1.
Let $\phi: A\to \overline{A\CB_\omega A}\cap\set{e}'$ be a nuclear, completely positive contractive map.

Then, for every $\eps>0$ and finite set $\CF\subset A$, there exists $\delta>0$ and a finite set $\mathcal G\subset A$ such that the following is true.

If $b\in A$ is a positive element of norm one with
\[
\max_{x\in\mathcal G} \|[b,x]\| \leq \delta \quad\text{and}\quad \|bx\|\geq\|x\|-\delta,\quad x\in\CG,
\]
then there exists an element $s\in \CB_\omega$ with $s^*s\leq e+\eps$ such that 
\[
fs=s,\ \|bs-s\|\leq\eps, \text{ and } \max_{x\in\mathcal F} \|s^*xs-\phi(x)e\|\leq\eps.
\]
\end{lemma}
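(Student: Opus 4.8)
The plan is to rerun the construction behind \autoref{lem:subequivalence}, but to twist the two building blocks coming from \autoref{lemma:null-full-mod} with the data of a pure-state excision, so that the element produced realizes not merely $e$ but the compression $\phi(\cdot)e$. First I would feed $\phi$ into \autoref{lem:nuclear-maps}, with a finite set and tolerance extracted from $\CF$ and $\eps$; this yields pairwise inequivalent pure states $\lambda_1,\dots,\lambda_L$ on $A$, together with $c_i\in\overline{A\CB_\omega A}\cap\set{e}'$ and $d_{i,l}\in A$, for which $\phi(x)\approx\sum_{l,i,j}\lambda_l(d_{i,l}^*xd_{j,l})\,c_i^*c_j$ on $\CF$. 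Since $\theta_A$ is essential, $A$ is non-elementary simple, so each $\lambda_l$ can be excised; and as the $\lambda_l$ are pairwise inequivalent, they can be excised \emph{simultaneously} by mutually orthogonal positive contractions. I would then collect into the finite set $\CG$ the elements $d_{i,l}$, the excising elements, and the products among them appearing below, and choose $\delta$ accordingly.

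Next I would build the two sequences for the \autoref{lemma:null-full-mod} machinery, now twisted by the excision and the coefficients. On the ``full'' side, using that $f$ is tracially supported at $1$, I would produce a sequence $c_n\in B_n$ whose compressions realize the scalar coefficients $\lambda_l(d_{i,l}^*(\cdot)d_{j,l})$ through the excision, with $\min_{\tau\in K_n}\tau(c_n)\geq\kappa$ for a sequence of compact generators $K_n$ as in \autoref{lem:compact-bases}; the almost-commutation and almost-norming conditions will arrange that $f$ and $b$ both act as left units on $c_n$, which is the source of $fs=s$ and $\|bs-s\|\le\eps$. On the ``null'' side, using that $e$ is tracially null, I would produce a sequence $d_n\in B_n$ with $\max_{\tau\in K_n}d_\tau(d_n)\to 0$ encoding $\phi(\cdot)e$ through the matrix coefficients $c_i^*c_j$. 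Local strict comparison of the $B_n$ then gives $d_n\precsim c_n$, hence $q_n$ with $q_n^*c_nq_n\approx d_n$, and $s$ is assembled from $c_n^{1/2}q_n$ exactly as $r$ was in \autoref{lem:subequivalence}. The bound $s^*s\le e+\eps$ follows from $q_n^*c_nq_n\approx d_n$ being tracially dominated by $e$ together with contractivity of the coefficient data, and $\max_{x\in\CF}\|s^*xs-\phi(x)e\|\le\eps$ from the excision relations, using that $e$ commutes with $A$ and $\phi(A)\subseteq\set{e}'$. All the approximate relations would be resolved together by a single $\eps$-test over $\prod_n B_n$.

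The hard part will be this interweaving itself. There is a real tension between the two roles $s$ must play: to reproduce $\phi(x)$ the variable $x$ must be flanked by the pure-state excising data, whereas to control the support one needs $f$ and $b$ to act as left units on $s$ and one needs $s^*s$ to be squeezed into the tracially \emph{null} element $e$. Since the excising elements are tracially \emph{large}, they cannot themselves serve as the support; one must instead arrange, on a single sequence in the $B_n$, that the full element simultaneously carries the coefficients $\lambda_l(d_{i,l}^*xd_{j,l})$ and admits a Cuntz subequivalence from the tracially small, $\phi$-twisted null element. Making these two demands compatible --- so that $s^*xs$ collapses to $\phi(x)e$ with the correct $c_i^*c_je$ structure rather than a conjugated variant, while still obeying $s^*s\le e+\eps$ --- is the technical heart, and is exactly where non-elementariness (for the excision) and local strict comparison (for the subequivalence) are simultaneously indispensable.
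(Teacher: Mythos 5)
Your setup matches the paper's: both feed $\phi$ into \autoref{lem:nuclear-maps}, excise the pairwise inequivalent pure states $\lambda_1,\dots,\lambda_L$ simultaneously by norm-one positive elements $a_l$ satisfying $a_lxa_k\approx\delta_{l,k}\lambda_l(x)a_l^2$ on the relevant products, and put the $d_{i,l}$, the $a_l$ and a cutting element into $\CG$. The gap is in the assembly step, which you yourself flag as ``the technical heart'' but do not resolve, and the route you gesture at would not deliver the conclusion. You propose a \emph{single} comparison in which the tracially null element already ``encodes $\phi(\cdot)e$ through the matrix coefficients $c_i^*c_j$'' and the tracially full element carries the scalar coefficients. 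But a Cuntz subequivalence $d_n\precsim c_n$ produces one element $q_n$ satisfying the single relation $q_n^*c_nq_n\approx d_n$ and nothing more; if $s$ is built from $c_n^{1/2}q_n$ as in \autoref{lem:subequivalence}, you control $s^*s$ but have no handle whatsoever on $s^*xs$ for the various $x\in\CF$, since $q_n$ is otherwise arbitrary. No choice of ``twisted'' null and full elements fixes this: you would need a whole family of relations indexed by $x$ to come out of one comparison.

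The paper resolves the tension by keeping the comparison \emph{plain}. It applies \autoref{lem:subequivalence} once per excisor, with the norm-one element $ba_l^2b/\|ba_l^2b\|$ in the role of $a$, obtaining contractions $r_l$ with $fr_l=r_l$, $ba_l^2b\,r_l=\|ba_l^2b\|r_l$ and $r_l^*r_l-e\in\CB_\omega\cap A^\perp$ --- no $\phi$ and no $x$ enter the comparison at all. All the $x$-dependence is then placed in \emph{outer} factors by setting $s_0=\sum_{l,j}d_{j,l}a_lbr_lc_j$: in the expansion of $s_0^*xs_0$ the middle blocks $a_ld_{i,l}^*xd_{j,k}a_k$ collapse via excision to $\delta_{l,k}\lambda_l(d_{i,l}^*xd_{j,l})a_l^2$, the eigenvector relation turns $r_l^*ba_l^2br_l$ into $\|ba_l^2b\|\,r_l^*r_l\approx e$, and $c_i^*(r_l^*r_l-e)c_j=0$ because $c_i\in\overline{A\CB_\omega A}$. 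Finally $s=hs_0$ for a cutting element $h$ with $hxh\approx x$ on $\CF$; this last step, which you omit, is what yields $s^*s=s_0^*h^2s_0\le\eps+\phi(h^2)e\le\eps+e$. So the ingredients you list are the right ones, but the mechanism producing $\max_{x\in\CF}\|s^*xs-\phi(x)e\|\le\eps$ is absent from your argument and cannot be recovered along the single-comparison route you describe.
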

\begin{proof}
We proceed similarly as in the proofs of \cite[Proposition 2.2]{MatuiSato12acta} or \cite[Lemma 4.4]{BBSTWW}.
Let $\CF\subset A$ and $\eps>0$ be given.
We can and will assume that $\CF=\CF^*$ consists of contractions.
Let us first choose a positive contraction $h\in A_+$ such that 
\begin{equation} \label{eq:def-h}
\|hxh-x\|\leq\eps/5,\quad x\in\mathcal F.
\end{equation}
We set $\CF_h = \set{ hxh \mid x\in\CF\cup\set{1} }\subset A$.

By \autoref{lem:nuclear-maps} there exist $L,N\in\IN$, pairwise inequivalent pure states $\lambda_1,\dots,\lambda_L$ on $A$ and elements $c_i\in \big(\overline{A\CB_\omega A}\cap\set{e}'\big) , d_{i,l}\in A$ for $i\leq N$ and $l\leq L$, such that
\begin{equation} \label{eq:phi-approx}
\phi(x)=_{\eps/5} \sum_{l=1}^L \sum_{i,j=1}^N \lambda_l(d_{i,l}^*x d_{j,l})c_i^*c_j
\end{equation}
for all $x\in\mathcal F_{h}$.

Without loss of generality we assume that the elements $c_j$ are contractions.
We set 
\[
\mathcal G' = \set{ d_{i,l}^* x d_{j,k} \mid x\in\mathcal F_{h},\  i,j\leq N,\  l,k\leq L } \subset A. 
\]
and choose $\delta>0$ small enough such that
\begin{equation} \label{eq:def-delta}
\delta + \sqrt{2\delta} \leq \frac{\eps}{5L^2N^2}\Big( 1+ \max\set{\|d_{j,l}\| \mid j\leq N,\ l\leq L} \Big)^{-1}.
\end{equation}
By \cite[Lemma 4.7]{BBSTWW}, we find positive elements $a_1,\dots,a_L\in A$ of norm one with the property
\begin{equation} \label{eq:aap-condition}
\max_{l,k\leq L}\ \max_{x\in\mathcal G'}\ \|a_lxa_k-\delta_{l,k}\lambda_l(x)a_l^2\|\leq \delta.
\end{equation}
We set
\begin{equation} \label{eq:def-CG}
\mathcal G = \set{d_{j,l} \mid j\leq N,\ l\leq L}\cup\set{ a_l \mid l\leq L}\cup\set{h} \subset A.
\end{equation}
Let us show that the pair $(\mathcal G, \delta)$ satisfies the desired property.

%
Let $b\in A_+$ be a positive element of norm one with
\begin{equation} \label{eq:def-b}
\max_{x\in\mathcal G} \|[x,b]\|\leq\delta \quad\text{and}\quad \|bx\|\geq\|x\|-\delta,\quad x\in\CG.
\end{equation}
We may apply \autoref{lem:subequivalence} a total of $L$ times and find contractions $r_1,\dots,r_L\in \CB_\omega$ 
\begin{equation} \label{eq:subeq-1}
fr_l=r_l,\quad ba_l^2b\cdot r_l = \|ba_l^2b\|r_l,\quad l=1,\dots,L
\end{equation}
and
\begin{equation} \label{eq:subeq-2}
r_l^* r_l - e\in  \CB_\omega\cap A^\perp,\quad l=1,\dots,L.
\end{equation}
Note that 
\[
\begin{array}{ccl}
\|(1-b)r_l\|^2 &=& \|r_l^*(1-b)^2r_l\| \\
&\leq& \|r_l^*(1-b^2)r_l\| \\
&\leq& \|r_l^*(1-ba_l^2b)r_l\| \\
&\stackrel{\eqref{eq:subeq-1}}{\leq}& 1-\|ba_l^2b\| \\
&\stackrel{\eqref{eq:def-b},\eqref{eq:def-CG}}{\leq}& 1-(1-\delta)^2 \ \leq \ 2\delta.
\end{array}
\]
We set
\[
s_0 = \sum_{l=1}^L \sum_{j=1}^N d_{j,l} a_l  b r_l c_j.
\]
Then the relation $fs_0=s_0$ follows trivially with \eqref{eq:subeq-1}.
Furthermore, using the upper estimate $\|(1-b)r_l\|^2\leq 2\delta$ from above we have
\[
\begin{array}{ccl}
\|(1-b)s_0\| &\leq& \displaystyle \sum_{l=1}^L \sum_{j=1}^N \Big( \|[d_{j,l},b]\| +  \|d_{j,l}\|\|[a_l,b]\|+  \|d_{j,l}\|\|(1-b)r_l\| \Big) \\
&\leq& \displaystyle \sum_{l=1}^L \sum_{j=1}^N \Big( \|[d_{j,l},b]\| +  \|d_{j,l}\|\|[a_l,b]\|+  \sqrt{2\delta}\|d_{j,l}\| \Big) \\
&\stackrel{\eqref{eq:def-b},\eqref{eq:def-CG}}{\leq}& 4NL (\delta+\sqrt{2\delta})\cdot\max_{j,l} \|d_{j,l}\| \\
&\stackrel{\eqref{eq:def-delta}}{\leq}& 4\eps/5.
\end{array}
\]
Moreover, we have for all $x\in\mathcal F_h$ that
\[
\begin{array}{ccl}
s_0^*xs_0 &=& \displaystyle \sum_{k,l=1}^L \sum_{i,j=1}^N c_i^* r_l^* b a_l d_{i,l}^* x d_{j,k} a_k b r_k c_j \\
&\stackrel{\eqref{eq:aap-condition},\eqref{eq:def-delta}}{=}_{\makebox[0pt]{\footnotesize\hspace{-7mm}$\eps/5$}}& \displaystyle \sum_{l=1}^L \sum_{i,j=1}^N c_i^* r_l^*  b a_l \lambda_l(d_{i,l}^* x d_{j,l})  a_l b  r_l c_j \\
&\stackrel{\eqref{eq:subeq-1}}{=}& 
\displaystyle \sum_{l=1}^L \sum_{i,j=1}^N \|ba_l^2b\| \lambda_l(d_{i,l}^* x d_{j,l}) c_i^* r_l^*r_l c_j   \\
&\stackrel{\eqref{eq:def-CG},\eqref{eq:def-b}}{=}_{\makebox[0pt]{\footnotesize\hspace{-6mm}$2\eps/5$}}& \displaystyle \sum_{l=1}^L \sum_{i,j=1}^N \lambda_l(d_{i,l}^* x d_{j,l}) c_i^* r_l^*r_l c_j \\
&\stackrel{\eqref{eq:subeq-2}}{=}& \displaystyle \sum_{l=1}^L \sum_{i,j=1}^N \lambda_l(d_{i,l}^* x d_{j,l}) c_i^* c_j e \\
&\stackrel{\eqref{eq:phi-approx}}{=}_{\makebox[0pt]{\footnotesize$\eps/5$}}& \phi(x)e.
\end{array}
\]
Here we have used in the penultimate equality that $c_i\in \big(\overline{A\CB_\omega A}\cap\set{e}'\big)$ for all $i\leq N$.
Finally, we set $s=hs_0$.
For $x=h^2$, the above calculation ensures in particular that 
\[
s^*s=s_0^*h^2s_0 \leq \eps+ \phi(h^2)e \leq \eps+e.
\]
For every $x\in\CF$, it follows by our choice of $h$ and the definition of $\CF_h$ that
\[
s^*xs=s_0^*hxhs_0 =_{4\eps/5} \phi(hxh)e \stackrel{\eqref{eq:def-h}}{=}_{\eps/5} \phi(x)e.
\]
The equation $fs=s$ is inherited from $s_0$.
Using the calculation above involving $b$ and $s_0$, we finally compute
\[
\begin{array}{ccl}
\|(1-b)s\| &\leq& \|s_0\|\|[b,h]\|+\|(1-b)s_0\| \\
&\leq & 4\eps/5+\|s_0\|\|[b,h]\| \\
&\stackrel{\eqref{eq:def-CG},\eqref{eq:def-b}}{\leq}& 4\eps/5+NL\delta\max_{j,l}\|d_{l,j}\|  \ \stackrel{\eqref{eq:def-delta}}{\leq} \ \eps.
\end{array}
\]
This finishes the proof.
\end{proof}

\begin{remark}
In comparison to the existing literature on property (SI) and excision of c.p.c.\ maps in small central sequences, the only noteworthy additional layer of generality above is given by the element $b$.
This aspect is (to my knowledge) irrelevant in the context of ordinary property (SI), but we will see in the next section that it becomes very useful for obtaining equivariant property (SI) with respect to actions of amenable groups.
\end{remark}

\begin{remark} \label{rem:excision-vwc}
In light of \autoref{rem:vwc}, we can again consider the special case $A=B_n$ and the canonical inclusion $A\subset A_\omega$.
Then the statement \autoref{lem:excision} is true when we assume that $A$ is non-elementary separable simple nuclear and has very weak comparison.
\end{remark}

\begin{theorem} \label{thm:ordinary-SI}
Let $B_n$ be a sequence of simple \cstar-algebras with local strict comparison.
Let $\CB_\omega$ be their ultraproduct, and let $A\subset\CB_\omega$ be a simple inclusion of a non-zero separable nuclear \cstar-subalgebra.
Let us also suppose that the representation $\theta_A$ is essential.
Then the inclusion $A\subseteq\CB_\omega$ has property (SI).
\end{theorem}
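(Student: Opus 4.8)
The plan is to read off property (SI) directly from the excision result \autoref{lem:excision}, fed with the identity map $\phi=\id_A$, and then to upgrade the resulting approximate data to an exact witness by a reindexation argument.

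First I would pass to genuine lifts. Given positive contractions $e,f\in F(A,\CB_\omega)$ with $f$ tracially supported at $1$ and $e$ tracially null, I lift them to positive contractions $e,f\in\CB_\omega\cap A'$, which is possible since the quotient map onto $F(A,\CB_\omega)$ lifts positive contractions to positive contractions. By \autoref{rem:traces-on-FA} every trace $\tau^\omega_a$ on $\CB_\omega\cap A'$ factors through $F(A,\CB_\omega)$, so the defining conditions in \autoref{def:null-full} transfer verbatim to these lifts. Since $A$ is nuclear, $\phi=\id_A$ is a nuclear c.p.c.\ map, and because $e\in\CB_\omega\cap A'$ we have $A\subseteq\overline{A\CB_\omega A}\cap\set{e}'$, so $\phi$ is admissible in \autoref{lem:excision}. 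Next I fix an increasing sequence of self-adjoint finite sets $\CF_m\subset A$ of contractions with dense union, arranged so that $x^*x\in\CF_m$ for each $x\in\CF_m$, together with a null sequence $\eps_m\searrow0$. For each $m$, \autoref{lem:excision} (which is where local strict comparison of the $B_n$ enters, via \autoref{lem:subequivalence}) produces $\delta_m>0$ and a finite set $\CG_m\subset A$; I then choose a norm-one positive $b_m\in A$ from a quasicentral approximate unit of $A$ so that $\max_{x\in\CG_m}\|[b_m,x]\|\le\delta_m$ and $\|b_mx\|\ge\|x\|-\delta_m$ for $x\in\CG_m$. This yields $s_m\in\CB_\omega$ with $fs_m=s_m$, with $s_m^*s_m\le e+\eps_m$, and with $s_m^*xs_m=_{\eps_m}xe$ for all $x\in\CF_m$. (The element $b_m$ is only needed to invoke the lemma; the relation $\|b_ms_m-s_m\|\le\eps_m$ is never used afterwards, in line with the remark that $b$ is irrelevant for ordinary (SI).)

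The crucial point is that each $s_m$ asymptotically commutes with $A$, and this is the step I expect to be the main obstacle, precisely because non-unitality gives only the one-sided bound $s_m^*s_m\le e+\eps_m$ and no lower bound on $s_m^*s_m$. For $x\in\CF_m$ (so $x^*,x^*x\in\CF_m$), I expand the positive element $[s_m,x]^*[s_m,x]$ into its four summands and substitute $s_m^*ys_m=_{\eps_m}ye$, using that $e$ is central. Three of the summands then collapse to $\pm x^*xe$, leaving $[s_m,x]^*[s_m,x]=_{C\eps_m}x^*s_m^*s_mx-x^*xe=:Y$, where $C$ is absolute since the $x$ are contractions. Now the upper bound $s_m^*s_m\le e+\eps_m$ forces $Y\le\eps_m x^*x$, while positivity of $[s_m,x]^*[s_m,x]$ forces $Y\ge-C\eps_m$; sandwiching yields $\|Y\|\le C'\eps_m$ and hence $\|[s_m,x]\|\to0$ as $m\to\infty$ for every fixed $x\in A$. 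This positivity sandwich is exactly what substitutes for the missing lower bound on $s_m^*s_m$.

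Finally I would apply the $\eps$-test \cite[Lemma 3.1]{KirchbergRordam14} (a standard reindexation) to the sequence $(s_m)_m$ tested against a countable dense subset of $A$: the asymptotic relations $fs_m=s_m$, $\|[s_m,x]\|\to0$, $\|s_m^*xs_m-xe\|\to0$, and $s_m^*s_m\le e+\eps_m$ upgrade to a single contraction $s\in\CB_\omega\cap A'$ with $fs=s$, with $s^*xs=xe$ for all $x\in A$, and with $s^*s\le e$. Writing $\bar s\in F(A,\CB_\omega)$ for its image, the relation $fs=s$ gives $f\bar s=\bar s$. For $a\in A_+$, commutativity of $s$ with $a^{1/2}$ gives $a^{1/2}(s^*s-e)a^{1/2}=s^*as-ae=0$, and since $e-s^*s\ge0$ a routine positivity argument then shows $s^*s-e\in\CB_\omega\cap A^\perp$, that is $\bar s^*\bar s=e$ in $F(A,\CB_\omega)$. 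Thus $\bar s$ is the desired contraction witnessing property (SI).
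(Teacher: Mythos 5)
Your proposal is correct and follows essentially the same route as the paper: feed $\phi=\id_A$ into \autoref{lem:excision} with an approximate unit in the role of $b$, reindex via the $\eps$-test, and extract commutation with $A$ and the annihilator relation $s^*s-e\in\CB_\omega\cap A^\perp$ from the identity $(xs-sx)^*(xs-sx)=x^*(s^*s-e)x\leq 0$. The only (cosmetic) difference is that you run this positivity argument approximately on each $s_m$ before the $\eps$-test, whereas the paper applies the $\eps$-test first and then performs the same computation exactly.
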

\begin{proof}
Let $e,f\in\CB_\omega\cap A'$ be two positive contractions such that $e$ is tracially null and $f$ is tracially supported at 1.
We denote by $\tilde{e},\tilde{f}\in F(A,\CB_\omega)$ their induced elements.
Considering the statement of \autoref{lem:excision} for 
\[
\phi=\id_A: A\to A\subset \overline{A\CB_\omega A} \cap \set{e}',
\] 
we may insert as $b$ an approximate unit of $A$, let $\CG\subset A$ get bigger, and let $\delta\to 0$.
This allows us to get the conclusion of \autoref{lem:excision} for arbitrary finite sets $\CF\subset A$ and $\eps>0$, with possibly varying choices for the element $b$.
Once we apply the $\eps$-test, this allows us to find a contraction $s\in\CB_\omega$ such that
\[
fs=s,\quad s^*s\leq e,\quad s^*xs=xe\quad\text{for all } x\in A.
\]
We conclude $s\in\CB_\omega\cap A'$ with the following standard computation.
For all $x\in A$, one has
\[
\begin{array}{ccl}
(xs-sx)^*(xs-sx) &=& s^*x^*xs - x^*s^*xs - s^*x^*sx + x^*s^*sx \\
&=& x^*xe-x^*xe-x^*ex + x^*s^*sx \\
&=& x^*(s^*s-e)x \ \leq \ 0.
\end{array}
\]
In particular, we get $s^*s-e\in\CB_\omega\cap A^\perp$.
For the element $\tilde{s}\in F(A,\CB_\omega)$ induced by $s$, this yields
\[
\tilde{f}\tilde{s}=\tilde{s} \quad\text{and}\quad \tilde{s}^*\tilde{s}=\tilde{e}.
\]
This finishes the proof.
\end{proof}

\begin{corollary}
Let $A$ be a non-elementary separable simple nuclear \cstar-algebra with very weak comparison.
Then $A$ has property (SI).
\end{corollary}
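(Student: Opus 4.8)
The plan is to recognize that this corollary is exactly the conclusion of \autoref{thm:ordinary-SI} specialized to $B_n = A$ for all $n$ with $A \subset A_\omega$ the canonical inclusion, the only difference being that the hypothesis of local strict comparison has been weakened to very weak comparison. By the notation following \autoref{def:property-SI}, asserting that $A$ has property (SI) is by definition the assertion that this canonical inclusion has property (SI) with $\Gamma = \set{1}$. First I would check the standing hypotheses of \autoref{thm:ordinary-SI}: since $A$ is simple, the inclusion $A \subset A_\omega$ is simple; and since $A$ is a non-elementary simple \cstar-algebra, the representation $\theta_A$ is essential in the sense of \autoref{nota:theta-A}. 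Nuclearity and separability are assumed outright.

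With those in place, I would re-run the argument of \autoref{thm:ordinary-SI} verbatim: given positive contractions $e, f \in A_\omega \cap A'$ with $e$ tracially null and $f$ tracially supported at $1$, one applies \autoref{lem:excision} to $\phi = \id_A$ with $b$ running over an approximate unit of $A$, then extracts via the $\eps$-test a contraction $s \in A_\omega$ with $fs = s$, $s^*s \leq e$, and $s^*xs = xe$ for all $x \in A$; the short positivity computation in that proof upgrades this to $s \in A_\omega \cap A'$ with $s^*s - e \in A_\omega \cap A^\perp$, which is the desired relation in $F_\omega(A)$. The only point in this chain where a comparison hypothesis is consumed is inside \autoref{lem:excision}, which itself rests on \autoref{lem:subequivalence}.

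Hence the entire substance of the proof is to certify that \autoref{lem:subequivalence} and \autoref{lem:excision} survive the relaxation to very weak comparison in this special case, and this is precisely what \autoref{rem:vwc} and \autoref{rem:excision-vwc} record. Concretely, \autoref{rem:constant-bases} allows the choice of a single constant compact generator $K = K_n$ of $T(A)$, so that the comparison step in \autoref{lem:subequivalence} only ever confronts the two elements $c_n, d_n$ with $\max_{\tau \in K} d_\tau(d_n) < \delta$ and $\min_{\tau \in K} d_\tau(c_n) > \kappa/2$ for $\omega$-all $n$, where $\delta > 0$ can be taken arbitrarily small. I expect the one genuinely load-bearing observation to be that this uniform gap over a \emph{fixed} compact generator is exactly the input that the $\eps$-$\delta$ formulation of very weak comparison digests to produce the sequence $q_n \in A$ with $q_n^* c_n q_n \to d_n$; collapsing the a priori moving generators $K_n$ to a constant $K$ is what turns the asymptotic domination of dimension functions required by strict comparison into the uniform threshold that very weak comparison supplies. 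Once \autoref{lem:subequivalence} and thus \autoref{lem:excision} are secured, the proof of \autoref{thm:ordinary-SI} goes through unchanged.
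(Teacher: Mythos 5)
Your proposal is correct and follows essentially the same route as the paper, whose proof of this corollary is precisely to invoke the argument of \autoref{thm:ordinary-SI} together with \autoref{rem:excision-vwc} (which in turn rests on \autoref{rem:vwc} and \autoref{rem:constant-bases}). You have correctly identified that the only point where the comparison hypothesis is consumed is the step producing the sequence $q_n$ in \autoref{lem:subequivalence}, and that fixing a constant compact generator is what makes very weak comparison suffice there.
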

\begin{proof}
This follows from the proof of \autoref{thm:ordinary-SI} and \autoref{rem:excision-vwc}.
\end{proof}


\section{Equivariant property (SI)}

\begin{proposition}[see\ {\cite[Proposition 2.1]{Sato16}}] \label{prop:small-Rt}
Let $A$ be a separable simple \cstar-algebra.
Let $\set{\alpha_i}_{i\in\mathbb N}$ be a countable family of outer automorphisms on $A$.
Then there exists a sequence $b_n\in A$ of norm one positive contractions such that
\[
\lim_{n\to\infty} \|[b_n,a]\|=0,\quad \lim_{n\to\infty} \|b_na\|=\|a\|, \quad a\in A,
\]
and
\[
\lim_{n\to\infty} \|b_n\alpha_i(b_n)\|=0,\quad i\in\mathbb N.
\]
\end{proposition}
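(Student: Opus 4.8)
The plan is to first establish a one--step, finite approximation and then build the sequence by exhaustion. Concretely, I would reduce to the following local statement: for every finite set $\CF\subset A$ of contractions, every $m\in\IN$ and every $\eps>0$, there is a positive contraction $b\in A$ with
\[
\|[b,x]\|\leq\eps \ \text{ and }\ \|bx\|\geq\|x\|-\eps \ \text{ for all } x\in\CF, \quad\text{and}\quad \|b\alpha_i(b)\|\leq\eps \ \text{ for } 1\leq i\leq m.
\]
Granting this, I would fix an increasing sequence of finite sets $\CF_1\subseteq\CF_2\subseteq\cdots$ whose union is dense in the unit ball of $A$, and take for $b_n$ the contraction provided by the local statement applied to the data $(\CF_n,n,1/n)$. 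Since every element of $A$ is a scalar multiple of a norm limit of elements of $\bigcup_n\CF_n$, the three asserted limits follow immediately; this is a routine instance of the $\eps$-test \cite[Lemma 3.1]{KirchbergRordam14}.

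For the local statement the decisive input is Kishimoto's technical lemma on outer automorphisms of simple \cstar-algebras. Because $A$ is simple and each $\alpha_i$ is outer --- hence properly outer --- this lemma yields, for the finitely many automorphisms $\alpha_1,\dots,\alpha_m$ at once, a positive norm--one element which almost commutes with $\CF$ and is almost orthogonal to each of its images $\alpha_i(b)$. I want to emphasise that handling several automorphisms simultaneously poses no real difficulty here: only the single products $b\alpha_i(b)$ need to be made small, with no mixed terms, so the proper outerness of each individual $\alpha_i$ is all that is required, and no relation among the $\alpha_i\alpha_j^{-1}$ enters.

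What remains, and what I expect to be the genuine point of the argument, is to reconcile this near--orthogonality with the fullness condition $\|bx\|\geq\|x\|-\eps$ and with approximate centrality. Approximate centrality I would arrange by drawing $b$ from a quasicentral approximate unit of $A$, so that $\|[b,x]\|$ is automatically controlled. The fullness is more delicate, and here it is essential to observe that $b$ \emph{cannot} be taken to act as an approximate unit: if $b_nx\to x$ held, then $\alpha_i(b_n)$ would act as an approximate unit as well, forcing $\|b_n\alpha_i(b_n)x\|\to\|x\|$ and contradicting $\|b_n\alpha_i(b_n)\|\to 0$. Thus $\|bx\|\geq\|x\|-\eps$ must be read as a purely metric largeness --- $b$ should be close to a unit only on a top spectral piece of each $x^*x$ --- and the task is to realise such a $b$ that is at the same time dynamically ``wandering'', so that $b$ and $\alpha_i(b)$ have essentially disjoint supports.

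The crux is therefore the coexistence of metric largeness with orthogonality to the automorphic images: an element that is almost orthogonal to its own images under the $\alpha_i$ is forced to be spread thin in the dynamical direction, which competes with its being close to a unit on the finitely many spectral peaks prescribed by $\CF$. The resolution is exactly the ``large'' form of Kishimoto's lemma, which exploits proper outerness (and, away from the elementary case, the abundance of room in a simple \cstar-algebra) to produce a norm--one element that is simultaneously a local unit on a prescribed hereditary piece and almost orthogonal to its images; quantifying how the largeness tolerance interacts with the orthogonality tolerance across $\alpha_1,\dots,\alpha_m$ is where the care lies. Everything else --- the quasicentral estimates, the exhaustion, and the final passage to $\|b_na\|\to\|a\|$ --- is routine.
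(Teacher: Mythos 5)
First, on the comparison itself: the paper does not prove this proposition --- it is imported verbatim from \cite[Proposition 2.1]{Sato16}, as the bracketed attribution indicates, so the only ``proof'' present is that citation. Measured against Sato's actual argument, your outline correctly identifies the two structural ingredients: the reduction to a finite local statement followed by a diagonal/$\eps$-test exhaustion, and Kishimoto's lemma on outer (hence, for separable simple \cstar-algebras, properly outer) automorphisms as the source of the near-orthogonality $\|b\alpha_i(b)\|\leq\eps$. Your remark that finitely many automorphisms cause no additional difficulty is also correct.

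The step you yourself single out as the crux, however, is not carried out, and the two devices you propose for it are mutually incompatible. Drawing $b$ from a quasicentral approximate unit cannot work: for \emph{any} approximate unit $(e_n)$ of $A$ and any automorphism $\alpha$ one has $\|e_n\alpha(e_n)\|\to 1$ (fix a norm-one $c\in A_+$ and note $\|e_n\alpha(e_n)\|^2=\|e_n\alpha(e_n)^2e_n\|\geq\|e_n\alpha(e_nce_n)e_n\|\to\|\alpha(c)\|=1$), so no element of an approximate unit, quasicentral or not, can satisfy the third condition. This is exactly the obstruction you correctly record for the largeness condition, and it applies verbatim to your proposed source of approximate centrality. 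On the other side, the ``large form of Kishimoto's lemma'' you invoke --- producing an element that is ``simultaneously a local unit on a prescribed hereditary piece and almost orthogonal to its images'' --- is not the standard lemma: the standard versions only let one prescribe a nonzero hereditary subalgebra in which the norm-one positive element lives, and give no control on commutators $[b,x]$ and no local-unit behaviour (their proofs, via excision of pure states, tend to produce elements that are ``small'' rather than unit-like). You neither state precisely nor prove the strengthened version you rely on. The net effect is that each of the three required properties is assigned to a different source, and the actual content of the proposition --- a \emph{single} positive contraction enjoying approximate centrality, metric largeness on $\CF$, and near-orthogonality to its $\alpha_i$-images simultaneously --- is left unestablished. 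That merging is precisely where Sato's proof does its work, so the proposal as written has a genuine gap at its central point.
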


\begin{theorem} \label{thm:equi-SI}
Let $\Gamma$ be a countable amenable group.
Let $B_n$ be a sequence of simple \cstar-algebras with local strict comparison, and let $\beta_n: \Gamma\curvearrowright B_n$ be a sequence of $\Gamma$-actions.
Let $\CB_\omega$ be the associated ultraproduct, and $\beta_\omega: \Gamma\curvearrowright\CB_\omega$ the corresponding ultraproduct action. 
Let $A$ be a non-elementary separable simple nuclear \cstar-algebra, and suppose that $A\subset\CB_\omega$ is an inclusion as a $\beta_\omega$-invariant \cstar-subalgebra with the following property: For every $g\in\Gamma$, if the automorphism $\beta_{\omega,g}|_A$ on $A$ is inner, then $\tilde{\beta}_{\omega,g}$ is trivial on $F(A,\CB_\omega)$.
Then the inclusion $A\subset\CB_\omega$ has property (SI) relative to $\beta_\omega$.
\end{theorem}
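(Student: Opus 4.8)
The plan is to derive the equivariant statement from the non-equivariant \autoref{thm:ordinary-SI} by an averaging argument over the group, where amenability enters through F{\o}lner sets and the averaging is made possible by the orthogonal central elements from \autoref{prop:small-Rt} together with the auxiliary element $b$ appearing in \autoref{lem:excision}. Before starting, I record the relevant group bookkeeping. Since $A$ is $\beta_\omega$-invariant, the assignment $g\mapsto\beta_{\omega,g}|_A$ is a homomorphism $\Gamma\to\operatorname{Aut}(A)$, so $N=\set{g\in\Gamma\mid \beta_{\omega,g}|_A\ \text{is inner}}$ is the preimage of the normal subgroup of inner automorphisms and hence a normal subgroup of $\Gamma$. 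By hypothesis $\tilde\beta_{\omega,g}$ is trivial on $F(A,\CB_\omega)$ for every $g\in N$, so $\tilde\beta_\omega$ factors through the countable amenable quotient $\Gamma/N$ and $F(A,\CB_\omega)^{\tilde\beta_\omega}=F(A,\CB_\omega)^{\Gamma/N}$. Distinct cosets of $N$ correspond to outer automorphisms of $A$, which is precisely the input needed for \autoref{prop:small-Rt}.

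Given positive contractions $e,f\in F(A,\CB_\omega)^{\tilde\beta_\omega}$ with $e$ tracially null and $f$ tracially supported at $1$, I first lift them to positive contractions $e,f\in\CB_\omega\cap A'$; since the relevant conditions only depend on the class, these lifts are again tracially null resp.\ tracially supported at $1$, and they are $\beta_\omega$-invariant modulo $\CB_\omega\cap A^\perp$. The first (and main) step is to produce a single localized solution $s_0\in\CB_\omega\cap A'$ of the relative property (SI). Applying \autoref{prop:small-Rt} to the countable family $\set{\beta_{\omega,g}|_A\mid g\in\Gamma\setminus N}$ of outer automorphisms yields positive contractions $b_n\in A$ that are asymptotically central, act asymptotically as units on $A$, and satisfy $\|b_n\,\beta_{\omega,g}(b_n)\|\to 0$ for every $g\notin N$. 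Feeding these $b_n$ as the element $b$ into \autoref{lem:excision} with $\phi=\id_A$ and running the $\eps$-test (over increasing finite subsets of $A$, decreasing tolerances and $n\to\omega$) exactly as in \autoref{thm:ordinary-SI}, I expect to obtain $s_0\in\CB_\omega\cap A'$ with $fs_0=s_0$ and $s_0^*xs_0=xe$ for all $x\in A$, so that in particular $s_0^*s_0-e\in\CB_\omega\cap A^\perp$. The new feature is that the localization $\|b_ns_0-s_0\|\to 0$ from \autoref{lem:excision}, combined with $\|b_n\beta_{\omega,g}(b_n)\|\to 0$, forces the relations $\beta_{\omega,g}(s_0)^*\beta_{\omega,h}(s_0)=0$ whenever $g^{-1}h\notin N$; that is, the $\Gamma$-translates of $s_0$ are pairwise orthogonal across distinct $N$-cosets.

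For the second step I fix a finite set $\bar S\subset\Gamma/N$, choose a F{\o}lner set $\bar F\subset\Gamma/N$ that is almost invariant under $\bar S$, pick a transversal $F=\set{g_1,\dots,g_m}\subset\Gamma$ for $\bar F$, and set $s=\tfrac{1}{\sqrt m}\sum_{j=1}^m\beta_{\omega,g_j}(s_0)$. Because $s_0\in\CB_\omega\cap A'$ and $A$ is $\beta_\omega$-invariant, each $\beta_{\omega,g_j}(s_0)$ and hence $s$ lies in $\CB_\omega\cap A'$. Testing against $A$ and using the orthogonality of the translates (which annihilates all off-diagonal terms), the invariance of $e$ modulo $\CB_\omega\cap A^\perp$, and $fs_0=s_0$, one checks directly that $fs-s\in\CB_\omega\cap A^\perp$ and $s^*s-e\in\CB_\omega\cap A^\perp$, i.e.\ $\tilde f\tilde s=\tilde s$ and $\tilde s^*\tilde s=\tilde e$ hold exactly. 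Approximate invariance is where the two ingredients conspire: for $g_0\in\Gamma$ representing a class in $\bar S$ one rewrites $\beta_{\omega,g_0g_j}(s_0)=\beta_{\omega,g_{i(j)}}\big(\beta_{\omega,n_j}(s_0)\big)$ with $g_{i(j)}\in F$ and $n_j\in N$, and since $s_0\in\CB_\omega\cap A'$ while $\tilde\beta_{\omega,n_j}$ acts trivially on $F(A,\CB_\omega)$, the interior terms of $\tilde\beta_{\omega,g_0}(\tilde s)$ and $\tilde s$ coincide exactly; only the F{\o}lner boundary remains, and by orthogonality its contribution is bounded by $2\,(|\bar g_0\bar F\,\triangle\,\bar F|/|\bar F|)^{1/2}$. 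Letting $\bar F$ exhaust $\Gamma/N$ and applying the $\eps$-test one last time over the (countably many) group elements then upgrades this approximate invariance to an honest fixed point, producing $\tilde s\in F(A,\CB_\omega)^{\tilde\beta_\omega}$ with $\tilde f\tilde s=\tilde s$ and $\tilde s^*\tilde s=\tilde e$.

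I expect the main obstacle to be the first step: one must obtain an (SI)-solution $s_0$ that lies in the relative commutant $\CB_\omega\cap A'$ \emph{and} has pairwise orthogonal $\Gamma$-translates. Orthogonality is exactly what the auxiliary element $b$ in \autoref{lem:excision} is designed to deliver, while centrality is forced in the $\eps$-test limit from $s_0^*xs_0=xe$ as in \autoref{thm:ordinary-SI}; the delicate point is that both must be arranged simultaneously. Centrality is in turn indispensable for the second step, because the triviality of $\tilde\beta_{\omega,n}$ on $F(A,\CB_\omega)$ for $n\in N$---the mechanism that removes the transversal ambiguity in the averaging---can only be invoked once $s_0$ defines an element of $F(A,\CB_\omega)$.
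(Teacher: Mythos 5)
Your proposal is correct and follows essentially the same route as the paper: identify the normal subgroup of inner restrictions, feed the orthogonalizing elements from \autoref{prop:small-Rt} as the auxiliary $b$ into \autoref{lem:excision} with $\phi=\id_A$, extract via the $\eps$-test a central (SI)-solution whose $\Gamma$-translates are orthogonal across distinct cosets, and then average over F{\o}lner sets in $\Gamma/N$ before a final $\eps$-test. The only cosmetic difference is that you carry out the averaging with explicit transversals in $\CB_\omega$ while the paper works directly with the induced action $\gamma$ of $\Gamma/\Lambda$ on $F(A,\CB_\omega)$; the content is identical.
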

\begin{proof}
We denote by $\alpha: \Gamma\curvearrowright A$ the action that arises from restricting $\beta_\omega$.
Let $\Lambda$ denote the normal subgroup of all elements $g\in\Gamma$ such that $\alpha_g$ is inner.
By assumption, the action $\tilde{\beta}_\omega:  \Gamma\curvearrowright F(A,\CB_\omega)$ is trivial when restricted to $\Lambda$, which induces an action $\gamma: \Gamma/\Lambda\curvearrowright F(A,\CB_\omega)$ via $\gamma_{g\Lambda}=\tilde{\beta}_{\omega,g}$.
Obviously one has $F(A,\CB_\omega)^{\tilde{\beta}_\omega} = F(A,\CB_\omega)^\gamma$.

Let $\tilde{e},\tilde{f}\in F(A,\CB_\omega)^{\tilde{\beta}_\omega}$ be two positive contractions such that $\tilde{e}$ is tracially null and $\tilde{f}$ is tracially supported at 1.
Consider two positive contractions $e,f\in \CB_\omega\cap A'$ that lift these elements.

Using \autoref{prop:small-Rt}, there exists a sequence $b_n\in A$ of norm one positive contractions such that
\[
\lim_{n\to\infty} \|[b_n,a]\|=0,\quad \lim_{n\to\infty} \|b_na\|=\|a\|,\quad a\in A,
\]
and
\[
\lim_{n\to\infty} \|b_n\alpha_g(b_n)\|=0,\quad g\in\Gamma\setminus\Lambda.
\]
The first two conditions ensure that we are in the position to apply \autoref{lem:excision} to the identity map $\phi=\id_A$ and for $b_n$ in place of $b$ and for arbitrary choices of $\CF\subset A$ and $\eps>0$.
We can hence find contractions $s_n\in\CB_\omega$ such that
\[
fs_n=s_n,\quad \|b_ns_n-s_n\|\to 0,\quad \|e-s_n^*s_n - (e-s_n^*s_n)_+\|\to 0,
\]
and furthermore
\[
 \|s_n^*xs_n-xe\|\to 0,\quad x\in A.
\]
By the asymptotic behavior of the elements $b_n$ it follows that $\|s_n^*\beta_{\omega,g}(s_n)\|\to 0$ for all $g\in\Gamma\setminus\Lambda$.
Applying the $\eps$-test, we can find a contraction $s\in\CB_\omega$ having the property that
\[
fs=s,\quad s^*s\leq e,\quad s^*xs=xe,\quad s^*\beta_{\omega,g}(s)=0,
\]
for all $x\in A$ and $g\in\Gamma\setminus\Lambda$.
Exactly as in the proof of \autoref{thm:ordinary-SI}, we conclude $s\in\CB_\omega\cap A'$ and $s^*s-e\in\CB_\omega\cap A^\perp$.
For the contraction $\tilde{s}_0=s+(\CB_\omega\cap A^\perp)\in F(A,\CB_\omega)$, this means
\[
\tilde{f}\tilde{s}_0=\tilde{s}_0,\quad \tilde{s}_0^*\tilde{s}_0=e,\quad \tilde{s}_0^*\gamma_{g\Lambda}(\tilde{s}_0)=0,\quad g\in\Gamma\setminus\Lambda.
\]
We can now proceed exactly as in the proof of either \cite[Proposition 4.5]{MatuiSato14} or \cite[Proposition 5.1]{Sato16}.
Given a finite set $L\subset\Gamma/\Lambda$, we see that
\[
\tilde{s}_L := |L|^{-1/2}\sum_{g\Lambda\in L} \gamma_{g\Lambda}(\tilde{s}_0)
\]
still satisfies the first two equations above.
However, if we insert a F{\o}lner sequence of $\Gamma/\Lambda$ in place of $L$, we see that the resulting elements $\tilde{s}_L$ are approximately fixed by $\gamma$ on large finite sets of $\Gamma/\Lambda$.
Applying the $\eps$-test once more, we can thus obtain $\tilde{s}\in F(A,\CB_\omega)^\gamma=F(A,\CB_\omega)^{\tilde{\beta}_\omega}$ such that $\tilde{f}\tilde{s}=\tilde{s}$ and $\tilde{s}^*\tilde{s}=e$.
This finishes the proof.
\end{proof}

\begin{corollary} \label{cor:equivariant-SI}
Let $\Gamma$ be a countable amenable group.
Let $A$ be a non-elementary separable simple nuclear \cstar-algebra with very weak comparison.
Then $A$ has equivariant property (SI) relative to every action $\Gamma\curvearrowright A$.
\end{corollary}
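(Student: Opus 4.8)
The plan is to deduce this from \autoref{thm:equi-SI} by specializing to the constant sequence $B_n=A$ and $\beta_n=\alpha$ for all $n$, where $\alpha\colon\Gamma\curvearrowright A$ denotes the given action. Then $\CB_\omega=A_\omega$ is the ultrapower, $\beta_\omega=\alpha_\omega$ is the ultrapower action, $F(A,\CB_\omega)=F_\omega(A)$, and $A\subset A_\omega$ is the canonical diagonal inclusion, which is simple since $A$ is. The hypotheses of \autoref{thm:equi-SI} on $\Gamma$ (countable amenable) and on $A$ (non-elementary separable simple nuclear) are then exactly those assumed here.

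First I would verify the one remaining structural hypothesis of \autoref{thm:equi-SI}: whenever $\alpha_g=\beta_{\omega,g}|_A$ is inner, the induced automorphism $\tilde{\alpha}_{\omega,g}$ is trivial on $F_\omega(A)$. This is a routine computation. If $\alpha_g=\operatorname{Ad}(u)$ for a unitary $u\in M(A)$, then for any $x\in A_\omega\cap A'$ and $a\in A$ one has $uxu^*a = ux(u^*a) = u(u^*a)x = ax = xa$, using that $x$ commutes with $u^*a\in A$; hence $(uxu^*-x)a=0$, and symmetrically $a(uxu^*-x)=0$, so that $uxu^*-x\in A_\omega\cap A^\perp$. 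Thus $\tilde{\alpha}_{\omega,g}$ fixes $F_\omega(A)$ pointwise, as required.

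The main obstacle is that \autoref{thm:equi-SI} requires each $B_n$ to have local strict comparison, whereas here $A$ is only assumed to have very weak comparison, which is strictly weaker. The resolution mirrors the non-equivariant case: the comparison hypothesis enters the proof of \autoref{thm:equi-SI} only through the appeal to \autoref{lem:excision} (and, inside it, \autoref{lem:subequivalence}), and by \autoref{rem:excision-vwc} this lemma remains valid in the present special case $A=B_n$ with the canonical inclusion under the weaker assumption of very weak comparison. I would therefore not invoke \autoref{thm:equi-SI} as a black box, but instead re-run its proof verbatim, substituting the very-weak-comparison form of \autoref{lem:excision} furnished by \autoref{rem:excision-vwc}. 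Concretely, given positive contractions $\tilde{e},\tilde{f}\in F_\omega(A)^{\tilde{\alpha}_\omega}$ with $\tilde{e}$ tracially null and $\tilde{f}$ tracially supported at $1$, I would lift them to $e,f\in A_\omega\cap A'$, apply \autoref{prop:small-Rt} to the countable family of outer automorphisms $\set{\alpha_g \mid g\notin\Lambda}$ (where $\Lambda$ is the normal subgroup of those $g$ with $\alpha_g$ inner) to produce the separating positive contractions $b_n$, feed these into the very-weak-comparison version of \autoref{lem:excision} to obtain an element $s\in A_\omega$ with $fs=s$, $s^*s\le e$, $s^*xs=xe$ for all $x\in A$, and $s^*\beta_{\omega,g}(s)=0$ for $g\notin\Lambda$, and finally average $\gamma_{g\Lambda}(\tilde{s}_0)$ over a F{\o}lner sequence of the amenable quotient $\Gamma/\Lambda$ to produce the desired $\tilde{s}\in F_\omega(A)^{\tilde{\alpha}_\omega}$ with $\tilde{f}\tilde{s}=\tilde{s}$ and $\tilde{s}^*\tilde{s}=\tilde{e}$.
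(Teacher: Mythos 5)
Your proposal is correct and follows essentially the same route as the paper: the paper's proof of this corollary is literally ``proceed as in the proof of \autoref{thm:equi-SI}, but appeal to \autoref{rem:excision-vwc} in place of \autoref{lem:excision}, and note that the hypothesis on inner automorphisms inducing trivial automorphisms of $F_\omega(A)$ is automatic.'' Your explicit verification that $uxu^*-x\in A_\omega\cap A^\perp$ for $x\in A_\omega\cap A'$ is a correct unwinding of the fact the paper cites from elsewhere, and your identification of where (and only where) the comparison hypothesis enters matches the paper exactly.
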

\begin{proof}
Proceed exactly as in the proof of \autoref{thm:equi-SI}, but appeal to \autoref{rem:excision-vwc} in place of \autoref{lem:excision}.
Note that the extra assumption about the inclusion is automatic in this case:
If $\alpha$ is an inner automorphism on $A$, then the induced automorphism $\tilde{\alpha}_\omega$ on $F_\omega(A)$ is indeed trivial; see \cite[Remark 1.8]{Szabo18ssa}.
\end{proof}

\begin{remark}
In a sense, the assumption about the inclusion $A\subset\CB_\omega$ appearing in \autoref{thm:equi-SI} may be understood as a a kind of dynamical largeness condition, which can be ensured either by the induced action on $A$ being  outer, or $A$ being sufficiently large in $\CB_\omega$.
Although it is used in an essential way in the proof presented above, I do not know if it is necessary for the conclusion of the theorem to hold.
Apart from the obvious application in the above corollary, there is at least one other potentially interesting case where this assumption holds:

Suppose that $B$ is non-elementary separable simple nuclear with local strict comparison.
Let $\beta: \Gamma\curvearrowright B$ be an action of a countable amenable group.
Suppose that $A\subset F_\omega(B)$ is a unital inclusion of a separable simple unital nuclear and $\tilde{\beta}_\omega$-invariant \cstar-subalgebra.
Denote by $\alpha:\Gamma\curvearrowright A$ the restriction of $\tilde{\beta}_\omega$.
Then we obtain a $\Gamma$-equivariant embedding
\[
(A\otimes B,\alpha\otimes\beta)\to (B_\omega,\beta_\omega) \quad\text{via}\quad a\otimes b\mapsto x_a\cdot b,
\]
where $b\in B$ and $x_a\in B_\omega\cap B'$ satisfies $a=x_a+B_\omega\cap B^\perp$.
Under this inclusion, we can naturally identify $F(A\otimes B,B_\omega)=F_\omega(B)\cap A'$.
If $\alpha_g\otimes\beta_g$ is inner on $A\otimes B$, then $\beta_g$ must also be inner, so $\tilde{\beta}_g$ is trivial on all of $F_\omega(B)$.
This ensures that \autoref{thm:equi-SI} applies to the inclusion $A\otimes B\subset B_\omega$, which gives us a version of equivariant property (SI) inside the relative commutant $F_\omega(B)\cap A'$.
\end{remark}

\begin{definition} \label{def:trace-kernel}
Let $A$ be a non-elementary separable simple \cstar-algebra.
We define the trace-kernel ideal $\CJ_A$ of $F_\omega(A)$ to consist of all those elements $v$ for which $v^*v$ is tracially null in the sense of \autoref{def:null-full} with respect to the canonical inclusion $A\subset A_\omega$.
Evidently $\CJ_A$ is automatically invariant under any automorphism of the form $\tilde{\alpha}_\omega$ on $F_\omega(A)$ induced by an automorphism $\alpha$ on $A$.
\end{definition}

\section{Applications}

In our first immediate application, we show how to recover the traceless version of \autoref{theorem-B} with a short proof, using the methods of this paper:

\begin{corollary}[cf.\ {\cite[Theorem 3.4]{Szabo18kp}}] \label{cor:Kirchberg}
Let $A$ be a Kirchberg algebra and $\Gamma$ a countable amenable group.
Then every $\Gamma$-action on $A$ is equivariantly $\CO_\infty$-stable.
\end{corollary}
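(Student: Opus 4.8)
The plan is to first reduce equivariant $\CO_\infty$-stability to the existence of a unital copy of $\CO_\infty$ in the fixed point algebra of the induced action on the central sequence algebra, and then to manufacture the defining Cuntz isometries using equivariant property (SI).

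First I would observe that the hypotheses put us in the scope of \autoref{cor:equivariant-SI}. A Kirchberg algebra $A$ is non-elementary, separable, simple, nuclear and purely infinite, hence traceless; by \autoref{rem:very-weak-comparison} a traceless simple \cstar-algebra is purely infinite exactly when it has very weak comparison, so \autoref{cor:equivariant-SI} applies to an arbitrary action $\alpha\colon\Gamma\curvearrowright A$. I would then unwind what equivariant property (SI) asserts in the traceless case: there are no generalized limit traces that are non-trivial on $A$ (\autoref{rem:traces-simple-inclusion}), so every positive element of $F_\omega(A)$ is tracially null, while a positive contraction $f$ is tracially supported at $1$ precisely when $\|fa\|=\|a\|$ for all $a\in A_+$. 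Writing $D=F_\omega(A)^{\tilde{\alpha}_\omega}$ for the (unital) fixed point algebra, property (SI) then says: for every positive contraction $e\in D$ and every $f\in D$ with $\|fa\|=\|a\|$ for all $a\in A_+$, there is a contraction $s\in D$ with $fs=s$ and $s^*s=e$.

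Next I would invoke the dynamical characterisation of equivariant $\CD$-stability for strongly self-absorbing $\CD$ from \cite{Szabo18ssa}: since $\CO_\infty$ is strongly self-absorbing and $\Gamma$ is countable, $\alpha$ is equivariantly $\CO_\infty$-stable as soon as there is a unital $*$-homomorphism $\CO_\infty\to D$. By the universal property of $\CO_\infty$ this comes down to exhibiting two isometries $t,w\in D$ with orthogonal ranges, since then $t^{k-1}w$ ($k\geq 1$) are isometries with pairwise orthogonal ranges and generate a unital copy of $\CO_\infty$ inside $D$. To build such a pair via (SI), it suffices to find two \emph{orthogonal} positive contractions $f_1,f_2\in D$, each tracially supported at $1$: applying (SI) with $e=1$ and $f=f_i$ yields contractions $s_i\in D$ with $f_is_i=s_i$ and $s_i^*s_i=1$, whence $s_is_i^*\leq f_i$ and $s_1^*s_2=s_1^*f_1f_2s_2=0$. (Alternatively, a single positive contraction $f_1\in D$ with $f_1$ \emph{and} $1-f_1$ tracially supported at $1$ already suffices: the first application produces an isometry $t$ with $tt^*\leq f_1$, one checks that $1-tt^*$ is again tracially supported at $1$, and feeding $f=1-tt^*$ into a second application yields $w$ with $t^*w=0$.)

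The main obstacle is thus to produce the required full elements inside the \emph{fixed point} algebra $D$. Non-equivariantly this is routine: pure infiniteness and simplicity of $A$ provide, approximately centrally, two orthogonal positive contractions acting as units in norm on any prescribed finite subset, hence two orthogonal tracially-supported-at-$1$ elements in $F_\omega(A)$. Making them $\tilde{\alpha}_\omega$-invariant is where amenability of $\Gamma$ is genuinely needed, and here one cannot simply average the full elements over the group, since a convex combination of norm-full contractions need not remain norm-full. Instead I would follow the architecture of the proof of \autoref{thm:equi-SI}: using \autoref{prop:small-Rt} I would arrange the representing sequences of the two full elements so that they and all their $\beta_\omega$-translates have mutually orthogonal ranges, apply the excision lemma \autoref{lem:excision} (in its non-equivariant form) to each so as to obtain elements $s_1^{(0)},s_2^{(0)}\in\CB_\omega$ whose translates remain orthogonal, and then average both over a F{\o}lner sequence. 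The cross terms vanish by the orthogonality of translates, so the averages converge, after an $\eps$-test, to genuine orthogonal isometries $t,w\in D$. I expect reconciling orthogonality of ranges, orthogonality of translates, and fullness simultaneously to be the technical heart of the argument, even though every ingredient is already contained in the machinery developed above.
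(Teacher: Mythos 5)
Your reduction is sound and matches the paper's endgame: by \cite[Corollary 3.8]{Szabo18ssa} it suffices to embed $\CO_\infty$ unitally into $F_\omega(A)^{\tilde{\alpha}_\omega}$, and the mechanism of feeding $e=1$ together with a tracially-supported-at-$1$ element $f$ into property (SI) to extract an isometry whose range projection sits under $f$ is exactly how the paper finishes. The gap sits at the point you yourself flag as the technical heart, and it is a genuine one. Your averaging scheme needs two positive contractions $f_1,f_2\in A_\omega\cap A'$, each satisfying $\|f_ia\|=\|a\|$ for all $a\in A_+$, such that \emph{all} mutual translates $\alpha_{\omega,g}(f_1)$, $\alpha_{\omega,h}(f_2)$ are pairwise orthogonal modulo $A_\omega\cap A^\perp$ (over coset representatives of the inner part $\Lambda$); only then do the cross terms in the F{\o}lner averages of the excised elements vanish, so that $t_L^*t_L\approx 1$, $w_L^*w_L\approx 1$ and $t_L^*w_L\approx 0$. \autoref{prop:small-Rt} supplies a \emph{single} norm-full element orthogonal to its own outer translates, and it only guarantees fullness ``at the top of its spectrum'', so a functional-calculus cut into two orthogonal pieces does not obviously preserve norm-fullness of both pieces; producing the required pair (and keeping all translates orthogonal) is an additional construction that is neither in the paper's toolkit nor supplied by you. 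As written, the argument therefore does not close.

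The paper sidesteps this entirely, and the trick is worth internalizing because it is precisely the payoff of having formulated equivariant (SI) relative to an inclusion $A\subset\CB_\omega$ rather than only for $A\subset A_\omega$. One embeds $(A,\alpha)$ diagonally into $(M_2(A)_\omega,\alpha^{(2)}_\omega)$ with $\alpha^{(2)}=\id_{M_2}\otimes\alpha$ and applies \autoref{thm:equi-SI} to this inclusion with the \emph{constant} matrices $f=\left(\begin{smallmatrix} 1 & 0 \\ 0 & 0\end{smallmatrix}\right)$ and $e=1$ inside $M_2\subset F(A,M_2(A)_\omega)^{\tilde{\alpha}^{(2)}_\omega}$: these are fixed by the amplified action for free, $e$ is tracially null and $f$ is tracially supported at $1$ because $A$ is traceless, so no fullness construction and no averaging of full elements is needed. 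The resulting invariant isometry $s$ with $ss^*\leq f$ can be represented by matrices $\left(\begin{smallmatrix} y_n & z_n \\ 0 & 0\end{smallmatrix}\right)$, and the entries descend to isometries $\tilde{y},\tilde{z}\in F_\omega(A)^{\tilde{\alpha}_\omega}$ with $\tilde{y}^*\tilde{z}=0$, giving proper infiniteness of the fixed point algebra in one stroke. If you want to salvage your route, replacing your construction of $f_1,f_2$ by this $M_2$-amplification is the cleanest fix.
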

\begin{proof}
In light of \autoref{rem:very-weak-comparison}, $A$ is nothing but a traceless separable simple nuclear \cstar-algebras with (local) strict comparison.
Let $\alpha:\Gamma\curvearrowright A$ be an arbitrary action.
We consider the matrix amplification $\alpha^{(2)}=\id\otimes\alpha: \Gamma\curvearrowright M_2\otimes A=M_2(A)$ and the equivariant diagonal inclusion
\[
(A,\alpha) \to (M_2(A)_\omega,\alpha^{(2)}_\omega),\quad a\mapsto \left(\begin{matrix} a & 0 \\ 0 & a \end{matrix}\right).
\]
Then this inclusion evidently satisfies the assumption in \autoref{thm:equi-SI} and does therefore have equivariant property (SI) relative to $\alpha^{(2)}_\omega$.
We borrow an idea from the proof of \cite[Proposition 8.4]{KirchbergRordam02} and apply this fact to the two elements
\[
f=\left(\begin{matrix} 1 & 0 \\ 0 & 0 \end{matrix}\right),\ e=1\ \in \ M_2=M_2(\CM(A))\cap(1\otimes A)' \ \subset \ F( A, M_2(A)_\omega )^{\tilde{\alpha}^{(2)}_\omega}.
\]
Here we implicitly use the canonical isomorphism (see \cite[Proposition 1.5(2)]{BarlakSzabo16})
\[
F( A, M_2(A)_\omega ) \cong M_2(\CM(\overline{A\cdot A_\omega\cdot A}))\cap (1\otimes A)'.
\] 
Recall from \autoref{def:null-full} that since $A$ is traceless, we can consider both elements $e$ and $f$ to be tracially null and tracially supported at 1.
As a consequence of \autoref{thm:equi-SI} and equivariant property (SI) applied to the above data, there exists an isometry $s\in F( A, M_2(A)_\omega )^{\tilde{\alpha}^{(2)}_\omega}$ such that $ss^*\leq f$.
If we represent $s$ by a sequence of contractions $s_n\in M_2(A)$, we may without loss of generality assume that it is of the form $s_n=\left(\begin{matrix} y_n & z_n \\ 0 & 0 \end{matrix}\right)$.
From the fact that $\lim_{n\to\omega} [s_n,1\otimes a] = 0$ for all $a\in A$ we directly get that the sequences $y_n, z_n\in A$ represent two contractions $y,z\in A_\omega\cap A'$.
The fact that $\lim_{n\to\omega} (e-s_n^*s_n)(1\otimes a)$ for all $a\in A$ directly implies that 
\[
1-y^*y,\ 1-z^*z,\ y^*z \ \in \ \tilde{A}_\omega\cap A^\perp.
\]
We also keep in mind that $y-\alpha_{\omega,g}(y)$ and $z-\alpha_{\omega,g}(z)$ are in $A_\omega\cap A^\perp$ for all $g\in\Gamma$.
In conclusion, these two elements induce contractions $\tilde{y}, \tilde{z} \in F_\omega(A)^{\tilde{\alpha}_\omega}$
satisfying the properties $\tilde{y}^*\tilde{y}=1=\tilde{z}^*\tilde{z}$ and $\tilde{y}^*\tilde{z}=0$.
In particular, the fixed point algebra $F_\omega(A)^{\tilde{\alpha}_\omega}$ is properly infinite and we may find a unital inclusion of $\CO_\infty$, which yields the desired conclusion via \cite[Corollary 3.8]{Szabo18ssa}.
\end{proof}

We now focus our attention back to the case of finite \cstar-algebras.

\begin{notation}
If $\rho: A_+\to [0,\infty]$ is a weight on a \cstar-algebra, we denote $D_\rho=\set{x\in A \mid \rho(x^*x)<\infty}$.
\end{notation}

\begin{definition}[cf.\ {\cite[Theorem 7.4.10]{Pedersen}}] \label{def:quasi-invariant}
Let $\rho$ be a proper weight on $A$ and $\alpha$ an automorphism on $A$.
We say that $\rho$ is $\alpha$-quasi-invariant, if under the GNS representation $(\pi_\rho,\CH_\rho)$, there exists a unitary $u_\alpha\in\CB(\CH_\rho)$ such that 
\[
u_\alpha\pi_\rho(a)u_\alpha^* = \pi_\rho(\alpha(a)) 
\]
for all $a\in A$.
In particular, conjugation by $u_\alpha$ defines an automorphism on $\pi_\rho(A)''$ extending the assignment $\pi_\rho(a)\mapsto\pi_\rho(\alpha(a))$ on $\pi_\rho(A)$.
\end{definition}

\begin{proposition} \label{prop:quasi-invariant}
Suppose that $\rho$ is a proper weight and $\alpha$ is an automorphism on $A$.
If there exists a constant $C>0$ such that $C^{-1}\rho \leq \rho\circ\alpha\leq C\rho$, then $\rho$ is $\alpha$-quasi-invariant.
\end{proposition}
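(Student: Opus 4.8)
The plan is to realize $\alpha$-quasi-invariance by comparing the GNS representation of $\rho$ with that of the weight $\rho\circ\alpha$, and to exploit the two-sided bound $C^{-1}\rho\le\rho\circ\alpha\le C\rho$ in order to manufacture an intertwining unitary. The strategy decomposes into three separate unitaries whose composition does the job.

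First I would record that $\psi:=\rho\circ\alpha$ is again a proper weight: since $\alpha$ is a $*$-automorphism, hence isometric and a homeomorphism, lower semicontinuity and dense definiteness of $\rho$ pass to $\psi$. Writing $\Lambda_\rho: D_\rho\to\CH_\rho$ and $\Lambda_\psi: D_\psi\to\CH_\psi$ for the canonical GNS maps, the bound $C^{-1}\rho\le\psi\le C\rho$ forces $D_\rho=D_\psi$ and shows that $\Lambda_\rho(x)\mapsto\Lambda_\psi(x)$ is well defined with $C^{-1}\|\Lambda_\rho(x)\|^2\le\|\Lambda_\psi(x)\|^2\le C\|\Lambda_\rho(x)\|^2$. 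Hence it extends to a bounded, boundedly invertible operator $T:\CH_\rho\to\CH_\psi$, and a direct computation on the dense subspace $\Lambda_\rho(D_\rho)$ gives $T\pi_\rho(a)=\pi_\psi(a)T$ for all $a\in A$; that is, $T$ intertwines the two representations.

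Next I would pass from the invertible intertwiner $T$ to a \emph{unitary} intertwiner by polar decomposition. Taking adjoints in the intertwining relation and replacing $a$ by $a^*$ shows $T^*\pi_\psi(a)=\pi_\rho(a)T^*$, whence $T^*T$ commutes with $\pi_\rho(A)$, i.e.\ $T^*T\in\pi_\rho(A)'$. Since $T^*T$ is positive and invertible and the commutant is a von Neumann algebra, $|T|=(T^*T)^{1/2}$ lies in $\pi_\rho(A)'$ as well. Writing the polar decomposition $T=U|T|$ with $U:\CH_\rho\to\CH_\psi$ unitary (genuinely unitary because $T$ is invertible), the fact that $|T|$ commutes with $\pi_\rho(a)$ lets me cancel it and conclude $U\pi_\rho(a)U^*=\pi_\psi(a)$ for every $a\in A$.

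Finally I would bring in the automorphism itself: the map $\Lambda_\psi(x)\mapsto\Lambda_\rho(\alpha(x))$ is isometric because $\psi(x^*x)=\rho(\alpha(x)^*\alpha(x))$ and is onto since $\alpha(D_\psi)=D_\rho$, so it defines a unitary $W:\CH_\psi\to\CH_\rho$, and the relation $\alpha(ax)=\alpha(a)\alpha(x)$ yields $W\pi_\psi(a)W^*=\pi_\rho(\alpha(a))$. Setting $u_\alpha:=WU\in\CB(\CH_\rho)$ then gives $u_\alpha\pi_\rho(a)u_\alpha^*=W\pi_\psi(a)W^*=\pi_\rho(\alpha(a))$, which is exactly $\alpha$-quasi-invariance. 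I expect the only genuinely delicate point to be the second step: one must verify that $T^*T$ lands in the commutant so that its square root does too, since this is precisely what forces the unitary in the polar decomposition to intertwine the representations, rather than being merely some unitary between the two Hilbert spaces.
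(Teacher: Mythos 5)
Your argument is correct and is essentially the paper's proof in factored form: the composite $W\circ T:\CH_\rho\to\CH_\rho$ is exactly the operator $\bar{\alpha}[x]=[\alpha(x)]$ that the paper works with directly on a single Hilbert space, one has $T^*T=\bar{\alpha}^*\bar{\alpha}$, and the delicate point you single out --- that the modulus lands in $\pi_\rho(A)'$ so that the polar part intertwines --- is precisely the computation the paper performs. The detour through the GNS space of $\rho\circ\alpha$ is a harmless repackaging, and your resulting unitary $u_\alpha=WU$ coincides with the paper's $\bar{\alpha}|\bar{\alpha}|^{-1}$.
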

\begin{proof}
If $C>0$ is a constant as above, then clearly 
\[
C^{-1} \|\cdot\|_{2,\rho} \leq \|\alpha(\cdot)\|_{2,\rho} \leq C\|\cdot\|_{2,\rho}.
\]
Hence we see that $D_\rho=\alpha(D_\rho)$ in $A$, and $\alpha$ induces a bounded invertible operator $\bar{\alpha}\in \mathcal B(\mathcal H_\rho)$ with $\bar{\alpha}[x]=[\alpha(x)]$ for all $x\in D_\rho$.
We observe
\[
(\bar{\alpha}\pi_\rho(a)\bar{\alpha}^{-1})[x]=(\bar{\alpha}\pi_\rho(a))[\alpha^{-1}(x)] = \bar{\alpha}[a\alpha^{-1}(x)] = [\alpha(a)x]=\pi_\rho(\alpha(a))[x]
\]
for all $a\in A$ and $x\in D_\rho$.
Furthermore
\[
\begin{array}{ccl}
\langle \bar{\alpha}^*\bar{\alpha}\pi_\rho(a)[x] \mid [y] \rangle_{\rho} &=& \langle \bar{\alpha}[ax] \mid \bar{\alpha}[y] \rangle_{\rho} \\
&=& \rho(\alpha(y^*ax)) \\
&=& \langle \bar{\alpha}[x]\mid \bar{\alpha}[a^*y] \rangle_{\rho} \\
&=& \langle \pi_\rho(a)\bar{\alpha}^*\bar{\alpha}[x] \mid [y] \rangle_{\rho}
\end{array}
\]
for all $a\in A$ and $x,y\in D_\rho$.
Hence $|\bar{\alpha}|\in\pi_\rho(A)'$, which allows us to conclude that $u_\alpha=\bar{\alpha}|\bar{\alpha}|^{-1}\in\mathcal B(\mathcal H_\rho)$ is a unitary as in \autoref{def:quasi-invariant}.
\end{proof}

\begin{definition}
Let $\rho$ be a proper weight on $A$ and $\alpha: \Gamma\curvearrowright A$ an action of a discrete group.
We say that $\rho$ is $\alpha$-quasi-invariant, if $\rho$ is $\alpha_g$-quasi-invariant for all $g\in\Gamma$.
In this case, $\alpha$ extends uniquely to an action $\alpha: \Gamma\curvearrowright\pi_\rho(A)''$.
\end{definition}

The following is a straightforward generalization of the more well-known concept of strong outerness for automorphisms on simple unital \cstar-algebras.

\begin{definition}
Let $A$ be a simple \cstar-algebra.
We say that an automorphism $\alpha$ on $A$ is strongly outer, if it is outer, and moreover for every proper $\alpha$-invariant trace $\tau$ on $A$, the induced automorphism of $\alpha$ on $\pi_\tau(A)''$ is outer.
We will call an action $\Gamma\curvearrowright A$ of a discrete group strongly outer, if $\alpha_g$ is strongly outer whenever $g\neq 1$.\footnote{In particular, keep in mind that a single automorphism $\alpha$ corresponds to a $\IZ$-action, but strong outerness for this action means that $\alpha^n$ is a strongly outer automorphism for every $n\neq 0$. This is of course much stronger than to assume that only $\alpha$ is strongly outer.}
\end{definition}

Recall that an automorphism $\alpha$ on a von Neumann algebra $M$ is called properly outer, if for every non-zero projection $p\in M^\alpha$ fixed by $\alpha$, the induced automorphism on $pMp$ is outer.
It is well-known that it suffices to check this condition only for projections $p$ that are also central in $M$.
We may later call an action of a group properly outer, if it consists of properly outer automorphisms except at the neutral group element.

\begin{proposition}  \label{prop:strong-outer}
Let $A$ be a simple \cstar-algebra with an outer automorphism $\alpha$.
Then $\alpha$ is strongly outer if and only if for every $\alpha$-quasi-invariant proper trace $\tau$ on $A$, the unique extension of $\alpha$ on $\pi_\tau(A)''$ is properly outer.
\end{proposition}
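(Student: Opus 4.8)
The plan is to prove the two implications separately, the non-trivial content lying in the passage from strong outerness to proper outerness.

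For the easy direction, suppose that the extension of $\alpha$ to $\pi_\tau(A)''$ is properly outer for every $\alpha$-quasi-invariant proper trace $\tau$. Every $\alpha$-invariant trace is in particular $\alpha$-quasi-invariant (apply \autoref{prop:quasi-invariant} with $C=1$), and proper outerness applied to the $\alpha$-fixed projection $p=1$ immediately yields outerness of the extension on $1\cdot\pi_\tau(A)''\cdot 1=\pi_\tau(A)''$. Together with the standing hypothesis that $\alpha$ is outer, this shows that $\alpha$ is strongly outer.

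For the converse I would argue by contraposition. Assume there is an $\alpha$-quasi-invariant proper trace $\tau$ such that the extension $\bar\alpha$ of $\alpha$ to $M=\pi_\tau(A)''$ fails to be properly outer. Using the fact recalled before the proposition, that proper outerness may be tested on central projections, I obtain a non-zero $\bar\alpha$-fixed central projection $p\in Z(M)$ such that $\bar\alpha|_{pM}=\operatorname{Ad}(u)$ for some unitary $u\in pM$. Writing $\hat\tau$ for the canonical normal semi-finite trace on $M$ induced by $\tau$ through the generalized GNS construction, I define $\sigma\colon A_+\to[0,\infty]$ by $\sigma(a)=\hat\tau(p\,\pi_\tau(a))$. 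Since $p$ is central and $\alpha$-fixed, one checks that $\sigma$ is again a proper trace on $A$: it is densely defined because $\sigma\leq\tau$ (as $p\leq 1$ and $\hat\tau$ extends $\tau$), it is tracial because $p$ is central, and it is non-zero because $\hat\tau$ is faithful and $p\neq 0$. Moreover its GNS representation is unitarily equivalent to the cut-down $a\mapsto p\,\pi_\tau(a)$ acting on $p\CH_\tau$, whose weak closure is precisely $pM$, and the induced automorphism of $\alpha$ on $\pi_\sigma(A)''$ is identified under this equivalence with $\bar\alpha|_{pM}$.

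The decisive observation is that innerness of $\bar\alpha|_{pM}$ forces $\sigma$ to be genuinely $\alpha$-invariant, even though $\tau$ was only quasi-invariant: since $\operatorname{Ad}(u)$ preserves the trace $\hat\tau$ on the corner $pM$, a short computation using $\bar\alpha(p)=p$ gives $\sigma\circ\alpha=\hat\tau\big(\bar\alpha(p\,\pi_\tau(\cdot))\big)=\sigma$. Thus $\sigma$ is an $\alpha$-invariant proper trace whose associated von Neumann algebra $\pi_\sigma(A)''\cong pM$ carries the inner automorphism $\bar\alpha|_{pM}=\operatorname{Ad}(u)$, contradicting strong outerness of $\alpha$ and completing the contrapositive. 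The main obstacle I anticipate is the von Neumann algebraic bookkeeping in the unbounded-trace setting: one must justify that a densely defined lower semi-continuous trace $\tau$ extends to a normal semi-finite faithful trace $\hat\tau$ on $M$, that cutting down by a central projection produces an honest proper trace $\sigma$ on all of $A$, and that the extension of $\alpha$ to $\pi_\sigma(A)''$ is correctly matched with $\bar\alpha|_{pM}$. These are routine once the correspondence between central projections of $M$ and tracial summands of $\tau$ is in place, but they demand care precisely because $\tau$ need not be bounded.
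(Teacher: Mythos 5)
Your proof is correct and follows essentially the same route as the paper: both arguments cut down by an $\alpha$-fixed central projection, identify the GNS von Neumann algebra of the resulting trace on $A$ with the corresponding corner of $\pi_\tau(A)''$, and exploit the fact that an inner automorphism preserves the trace. The only difference is organizational --- you argue the hard direction by contraposition, whereas the paper runs a direct two-case analysis on whether the cut-down trace is genuinely $\alpha$-invariant --- and your closing remarks correctly identify the (routine) von Neumann algebraic bookkeeping that the paper likewise leaves implicit.
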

\begin{proof}
The ``if'' part is tautological, so let us consider the ``only if'' part.
Suppose that $\alpha$ is strongly outer in the sense of the above definition, and let $\tau$ be an $\alpha$-quasi-invariant proper trace on $A$.
Set $M=\pi_\tau(A)''$.
Let $e\in Z(M)^\alpha$ be a non-zero central projection fixed by $\alpha$.
Then the map $\tau_0=[a\mapsto \tau(e\cdot a)]$ defines another $\alpha$-quasi-invariant proper trace on $A$ since $A$ is simple.
Furthermore there is a natural isomorphism $\pi_{\tau_0}(A)''=eM$ that respects $\alpha$.
If $\tau_0$ is not genuinely $\alpha$-invariant, then evidently $\alpha$ cannot be inner on $eM$ because it moves a trace.
But if $\tau_0$ is genuinely $\alpha$-invariant, then $\alpha$ is outer on $eM$ by assumption.
Since $e$ was arbitrary, we conclude that $\alpha$ is properly outer on $M$.
\end{proof}

\begin{remark} \label{rem:finitely-many-traces}
Let $A$ be a simple \cstar-algebra so that the tracial cone $T(A)\neq\emptyset$ is finite-dimensional.
Let $\tau_1,\dots,\tau_m$ be a family of representatives for the rays of extremal traces in $T(A)$.
Let $\alpha$ be an automorphism on $A$.
For each $j=1,\dots,m$, the trace $\tau_j\circ\alpha$ must again be an extremal trace.
Hence there is a permutation $\sigma: \set{1,\dots,m}\to\set{1,\dots,m}$ and constants $c_1,\dots,c_m>0$ such that $\tau_j\circ\alpha=c_j\cdot\tau_{\sigma(j)}$ for all $j=1,\dots,m$.
We can observe for the trace $\tau=\sum_{j=1}^m\tau_j$ and the constant
\[
C=\max_{1\leq j\leq m} c_j+c_j^{-1}
\]
that we have $C^{-1} \tau \leq \tau\circ\alpha \leq C\tau$, and hence $\tau$ is $\alpha$-quasi-invariant by \autoref{prop:quasi-invariant}.
We denote (cf.\ \cite[Chapter 6]{Dixmier} for the isomorphism below)
\[
M=\pi_\tau(A)'' \cong \pi_{\tau_1}(A)'' \oplus\dots\oplus \pi_{\tau_m}(A)'',
\] 
which is clearly independent of the choice of $\tau$ or $\tau_1,\dots,\tau_m$.

For every proper trace $\tau'$ on $A$, if we write $\tau'=\sum_{i=0}^k \lambda_i\tau_{j_i}$ for $\lambda_i>0$, then the weak closure $\pi_{\tau'}(A)''$ can be realized naturally as the corner of $M$ coming from the support projection for $\tau_{j_1},\dots,\tau_{j_k}$.
The subset $\set{ j_i \mid i=1,\dots,k }\subset\set{1,\dots,m}$ is then $\sigma$-invariant if and only if $\tau'$ is $\alpha$-quasi-invariant.
\end{remark}

For what comes next, we need to set up a suitable replacement of tracial ultrapowers of \cstar-algebras with respect to unbounded traces, in order to apply it in the context of the above remark and to be able to exploit (equivariant) property (SI) to obtain \autoref{theorem-C}.

\begin{remark}
To appreciate the next definition, the reader should recall the notion of Ocneanu ultrapower for a separably acting von Neumann algebra $M$.
One considers the \cstar-subalgebra 
\[
I(M)=\set{ (x_k) \mid x_k\stackrel{k\to\omega}{\longrightarrow} 0 \text{ in the strong-$*$ topology}}\subset\ell^\infty(\IN,M),
\] 
its normalizer algebra 
\[
N(M)=\set{ y\in \ell^\infty(\IN,M) \mid y I(M) + I(M) y \subseteq I(M)},
\] 
and defines the ultrapower as the resulting quotient $M^\omega=N(M)/I(M)$.
Usually the algebra $I(M)$ is defined via a faithful normal state $\rho$ on $M$, as a bounded sequence $x_k\in M$ converges to zero in the strong-$*$ topology precisely when $\|x_k\|_\rho^\#\to 0$. 
$M^\omega$ turns out to be a von Neumann algebra capturing the asymptotic behavior of bounded sequences in $M$; see \cite{AndoHaagerup14} for a very detailed exposition.
\end{remark}

\begin{definition}[cf.\ \cite{Kirchberg94, AndoKirchberg16, GoldbringHartSinclair18}]
Let $A$ be a \cstar-algebra with a distinguished state $\rho$.
Let $\omega$ be a free ultrafilter on $\mathbb N$.
We write
\[
\|a\|_\rho^\# = \Big( \frac12\rho(a^*a+aa^*) \Big)^{1/2}
\]
for all $a\in A$, and
\[
\|x\|_{\rho,\omega}^\#=\lim_{n\to\omega} \|x_n\|_\rho^\# 
\]
for $x\in A_\omega$ and every bounded sequence $x_n\in A$ representing $x$.
We consider the (hereditary) \cstar-subalgebra
\[
\CI_\rho = \set{ x\in A_\omega \mid \|x\|_{\rho,\omega}^\# = 0 } \subseteq A_\omega
\]
and its normalizer
\[
\CN_\rho = \set{ x\in A_\omega \mid x \CI_\rho + \CI_\rho x \subseteq \CI_\rho }.
\]
The statial ultrapower (or \cstar-to-$\mathrm{W}^*$ ultrapower) of the pair $(A,\rho)$ is defined as the quotient
$A_\rho^\omega = \CN_\rho/\CI_\rho$.
\end{definition}

\begin{remark}[see {\cite[Section 4]{Szabo18ssa2} and \cite{Liao16, Liao17}}] \label{rem:sigma-ideals}
Let $A$ be a \cstar-algebra and $\alpha: \Gamma\curvearrowright A$ an action of a discrete group.
Recall that an $\alpha$-invariant ideal $J\subseteq A$ is called a $\Gamma$-$\sigma$-ideal, if for every separable $\alpha$-invariant \cstar-subalgebra $D\subset A$, there exists a positive contraction $e\in (J\cap D')^\alpha$ such that $ec=c=ce$ for all $c\in J\cap D$.
This generalizes Kirchberg's concept of a $\sigma$-ideal, which arises from considering $\Gamma=\set{1}$.

In \cite[Proposition 4.5]{Szabo18ssa2}, it is proved that if $J$ is a $\Gamma$-$\sigma$-ideal, then the induced equivariant quotient map $\pi: (A,\alpha)\to (A/J,\bar{\alpha})$ is strongly locally semi-split, i.e., for every separable $\bar{\alpha}$-invariant \cstar-subalgebra $B\subset A/J$, there is an equivariant c.p.c.\ order zero map $\psi: (B,\bar{\alpha})\to (A,\alpha)$ with $\pi\circ\psi=\id_B$.
\end{remark}

\begin{proposition} \label{prop:technical-stuff}
Let $A$ be a separable \cstar-algebra with a state $\rho$.
Let $\alpha: \Gamma\curvearrowright A$ be an action of a countable discrete group.
Then
\begin{enumerate}[label=\textup{(\roman*)}]
\item $\CI_\rho\subseteq \CN_\rho$ is a $\sigma$-ideal and contains $A_\omega\cap A^\perp$.  \label{prop:technical-stuff:1}
\item If $\rho$ is $\alpha$-quasi-invariant, then $\CI_\rho$ is invariant under $\alpha_\omega$, which induces a $\Gamma$-action $\alpha^\omega$ on $A_\rho^\omega$.  \label{prop:technical-stuff:2}
Moreover $\CI_\rho\subset \CN_\rho$ becomes a $\Gamma$-$\sigma$-ideal.
\item Set $M=\pi_\rho(A)''$. Then the canonical embedding $A_\rho^\omega\to M^\omega$ into the Ocneanu ultrapower of $M$ is an isomorphism.
The same is true for the canonical map $A_\rho^\omega\cap A'\to M^\omega\cap M'$.
If additionally $\rho$ is $\alpha$-quasi-invariant, then these embeddings respect the canonical actions $\alpha^\omega$ induced by $\alpha$ on both sides, respectively.  \label{prop:technical-stuff:3}
\end{enumerate}
\end{proposition}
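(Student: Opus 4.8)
The plan is to treat the three parts in order, with part~\ref{prop:technical-stuff:3} supplying the conceptual backbone; throughout I would work with the $\omega$-strong-$*$ seminorm and translate between $A$ and $M=\pi_\rho(A)''$ via the GNS data $(\pi_\rho,\CH_\rho,\xi_\rho)$.

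For \ref{prop:technical-stuff:1}, the inclusion $\CI_\rho\subseteq\CN_\rho$ and the fact that $\CI_\rho$ is a closed two-sided ideal of $\CN_\rho$ are immediate from the definition of the normalizer together with $\CI_\rho$ being a hereditary \cstar-subalgebra. For $A_\omega\cap A^\perp\subseteq\CI_\rho$ I would represent $x$ by a bounded sequence $(x_n)$ and estimate $\rho(x_n^*x_n)=\|\pi_\rho(x_n)\xi_\rho\|^2$, approximating $\xi_\rho$ by $\pi_\rho(u)\xi_\rho$ for an approximate unit $u$ of $A$ and using $\lim_{n\to\omega}\|x_nu\|=0$; since $A^\perp$ is $*$-closed the same bound applies to $x^*$, whence $\|x\|_{\rho,\omega}^\#=0$. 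The real content is that $\CI_\rho$ is a $\sigma$-ideal. Given separable $D\subseteq\CN_\rho$, I would first produce a $\rho$-small positive contraction acting as a unit on $\CI_\rho\cap D$: choosing a strictly positive $h\in(\CI_\rho\cap D)_+$ and applying the $\eps$-test \cite[Lemma 3.1]{KirchbergRordam14} to the contractions $\chi_\delta(h)$ with $\chi_\delta(t)=t(t+\delta)^{-1}$, one uses $\|\chi_\delta(h)\|_{\rho,\omega}^\#\leq\delta^{-1}\|h\|_{\rho,\omega}^\#=0$ (valid for each fixed $\delta$, as $h\in\CI_\rho$) together with $\|\chi_\delta(h)h-h\|\leq\delta$ to extract an exact unit $e_0\in\CI_\rho$ on $\CI_\rho\cap D$. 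Upgrading $e_0$ to an element of $\CI_\rho\cap D'$ is then done by Kirchberg's reindexation: the reindexing homomorphism preserves the seminorm-ideal $\CI_\rho$ and fixes $D$, so the image $e$ remains $\rho$-small and, commuting with $D\supseteq\CI_\rho\cap D$, still acts as a unit there.

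For \ref{prop:technical-stuff:2}, quasi-invariance supplies the implementing unitaries $u_{\alpha_g}$ of \autoref{def:quasi-invariant}, so that $\|\alpha_g(a)\|_\rho^\#$ is the strong-$*$ seminorm of $\pi_\rho(a)$ computed at $u_{\alpha_g}^*\xi_\rho$ rather than $\xi_\rho$. That $\alpha_\omega$ preserves $\CI_\rho$ thus amounts to the set $V$ of vectors witnessing $\omega$-strong-$*$ convergence to $0$ being all of $\CH_\rho$; as $V$ is a closed $M'$-invariant subspace containing $\xi_\rho$, this holds once $\xi_\rho$ is cyclic for $M'$, i.e.\ separating for $M$ (equivalently $\omega_{\xi_\rho}$ faithful), and the general case reduces to the support projection of $\omega_{\xi_\rho}$. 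Invariance of $\CI_\rho$ forces invariance of $\CN_\rho$ and hence yields $\alpha^\omega$ on $A_\rho^\omega$; the $\Gamma$-$\sigma$-ideal property then follows by rerunning the construction of \ref{prop:technical-stuff:1} with $D$ chosen $\alpha_\omega$-invariant (possible since $\Gamma$ is countable) and the reindexation performed equivariantly, exactly as in the argument of \cite{Szabo18ssa2} recalled in \autoref{rem:sigma-ideals}, which requires no amenability.

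For \ref{prop:technical-stuff:3}, the representation $\pi_\rho$ induces a $*$-homomorphism on bounded sequences with $\|x\|_{\rho,\omega}^\#$ equal to the $\omega$-strong-$*$ seminorm of $(\pi_\rho(x_n))$, so $\CI_\rho$ maps into $I(M)$ and $\CN_\rho$ into $N(M)$, yielding an injective $*$-homomorphism $A_\rho^\omega\to M^\omega$ whose kernel is precisely $\CI_\rho$. Surjectivity I would obtain from Kaplansky's density theorem: any bounded sequence in $M=\pi_\rho(A)''$ is strong-$*$ approximable by $\pi_\rho(A)$, hence lifts entrywise and with controlled norms to a sequence in $A$ realizing the same $\omega$-strong-$*$ class; the identical lifting inside the commutants, using that $A$ is strong-$*$ dense in $M$, handles $A_\rho^\omega\cap A'\to M^\omega\cap M'$. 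Finally, under quasi-invariance the unitaries $u_{\alpha_g}$ implement $\alpha_g$ on $M$ and therefore $\operatorname{Ad}(u_{\alpha_g})$ on $M^\omega$, which matches the image of $\alpha^\omega$. The main obstacle is concentrated in two places: in \ref{prop:technical-stuff:1} a level-wise construction of a \emph{central} unit fails, because a $\rho$-small positive element can be norm-full and then approximate units of the ideal it generates are not $\rho$-small, so reconciling quasicentrality with $\rho$-smallness genuinely requires the reindexation rather than a bare $\eps$-test; and in \ref{prop:technical-stuff:3} the delicate step is the Kaplansky lifting compatibly with the normalizer and relative commutant, where one must also contend with possible non-faithfulness of $\omega_{\xi_\rho}$ on $M$, either arranging faithfulness or passing to the support before identifying $M^\omega$ with the Ocneanu ultrapower in the sense of \cite{Kirchberg94, AndoKirchberg16}.
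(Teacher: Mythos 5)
Your handling of part \ref{prop:technical-stuff:3} and of the inclusion $A_\omega\cap A^\perp\subseteq\CI_\rho$ coincides with the paper's argument (Kaplansky density plus the identity $M^\omega\cap A'=M^\omega\cap M'$, and strict continuity of states, respectively). The genuine gap is in the $\sigma$-ideal step of \ref{prop:technical-stuff:1}, and it propagates to the $\Gamma$-$\sigma$-ideal claim in \ref{prop:technical-stuff:2}. The ``main obstacle'' you identify --- that quasicentrality cannot be reconciled with $\rho$-smallness, so that a bare $\eps$-test fails and reindexation is genuinely required --- is a misdiagnosis: the relevant ideal is not the ideal generated by a single $\rho$-small element, but $\CI_\rho$ itself, which by the very definition of the normalizer is a closed two-sided ideal of $\CN_\rho$. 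Arveson's theorem therefore supplies an approximate unit of $\CI_\rho$ that is quasicentral for $\CN_\rho$; its members lie in $\CI_\rho$ and are hence automatically $\rho$-small, so there is no tension to resolve. Given a separable $D\subset\CN_\rho$, one gets for every $\eps>0$ and finite $F\subset D$, $G\subset\CI_\rho\cap D$ a positive contraction $u\in\CI_\rho$ with $\|[u,d]\|\leq\eps$ for $d\in F$ and $\|ug-g\|\leq\eps$ for $g\in G$; since membership in $\CI_\rho$ is itself a condition of the form $\lim_{n\to\omega}\|u_n\|_\rho^\#=0$, the $\eps$-test then yields the exact unit in $(\CI_\rho\cap D')_+$. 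This is exactly the paper's proof, which for \ref{prop:technical-stuff:2} additionally uses \cite[Lemma 1.4]{Kasparov88} to make the approximate unit approximately $\alpha_\omega$-invariant before applying the $\eps$-test.

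By contrast, the route you substitute does not work as described: producing first an exact unit $e_0\in\CI_\rho$ for $\CI_\rho\cap D$ and then ``reindexing'' cannot force commutation with $D$. If $e_0$ is represented by $(e_{0,n})$ and $d\in D$ by $(d_n)$, a reindexation replaces $e_0$ by the class of $(e_{0,\nu(n)})$, and there is no relation between $e_{0,\nu(n)}$ and $d_n$ at mismatched indices that would make the commutators small; reindexation fixes constant sequences (i.e.\ $A$), not a general separable subalgebra of $\CN_\rho\subset A_\omega$. Any correct diagonal argument must begin with a sequence of witnesses that already $\eps$-commute with $D$, which is precisely the quasicentral approximate unit you set aside. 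Two further remarks: in \ref{prop:technical-stuff:2}, invariance of $\CI_\rho$ under $\alpha_\omega$ should be derived from a two-sided comparison $C^{-1}\|\cdot\|_\rho^\#\leq\|\alpha_g(\cdot)\|_\rho^\#\leq C\|\cdot\|_\rho^\#$ (available in the paper's applications via \autoref{prop:quasi-invariant}); your proposed ``reduction to the support projection'' when $\xi_\rho$ is not separating for $M$ is not an argument and would need to be carried out. Your caution about faithfulness in \ref{prop:technical-stuff:3} is reasonable, but in the intended setting of \autoref{rem:from-traces-to-states} the seminorm $\|\cdot\|_\rho^\#$ does induce the strong-$*$ topology on bounded subsets of $M$, which is all the Kaplansky argument needs.
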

\begin{proof}
\ref{prop:technical-stuff:1} is \cite[Proposition 4.10]{AndoKirchberg16} and is an easy consequence of the $\eps$-test.
The inclusion $A_\omega\cap A^\perp\subset\CI_\rho$ follows from the fact that every state on $A$ is continuous with regard to the strict topology; see \cite[Lemma 4.12]{AndoKirchberg16}.

\ref{prop:technical-stuff:2}: The first part of the statement is clear.
For the second part, we proceed as follows.
Let $D\subset\CN_\rho$ be a separable $\alpha_\omega$-invariant \cstar-subalgebra.
Since $\CI_\rho$ admits quasicentral approximately $\alpha_\omega$-invariant approximate units (see \cite[Lemma 1.4]{Kasparov88}), there is a sequence $e_n\in\CI_\rho$ such that $\|e_n-\alpha_{\omega,g}(e_n)\|\to 0$ for all $g\in\Gamma$, $\|e_nc-c\|\to 0$ for all $c\in\CI_\rho\cap D$ and $\|[e_n,a]\|\to 0$ for all $a\in D$. 
By applying the $\eps$-test, we can find a single positive contraction $e\in \big( \CI_\rho\cap D' \big)^{\alpha_\omega}$ such that $ec=c$ for all $c\in\CI_\rho\cap D$, which proves the claim.

\ref{prop:technical-stuff:3}: 
This is essentially \cite[Proposition 3.4]{AndoKirchberg16}, but let us give the brief argument here.
The first part of the statement is a consequence of Kaplansky's density theorem.
If an element $x\in M^\omega$ is represented by a bounded sequence $x_n\in M$, then we may find a bounded sequence $a_n\in A$ with $\|a_n-x_n\|_\rho^\# \leq \frac1n$.
So by definition, the element $a\in A_\rho^\omega$ also represents $x\in M^\omega$ under the canonical embedding $A_\rho^\omega \to M^\omega$.
Since $x$ was arbitrary, this embedding is therefore an isomorphism.
For the second part of the statement, we just need to know that the canonical inclusion $A\subseteq A_\rho^\omega$ is sent to the inclusion $A\subseteq M\subseteq M^\omega$ under this isomorphism, which is trivial.
As $A$ is $*$-strongly dense in $M$, one has $M^\omega\cap A' = M^\omega\cap M'$, which is the image of $A_\rho^\omega\cap A'$ under the isomorphism $A_\rho^\omega \to M^\omega$.
Finally, the last part of the statement is trivial because all the involved maps are induced by the canonical inclusion $A\subseteq M$, which is by definition equivariant.
\end{proof}

\begin{remark} \label{rem:from-traces-to-states}
Let $A$ be a \cstar-algebra and $\tau$ a proper trace on $A$.
Let $b_n\in D_\tau$ be a sequence of positive contractions that contains an approximate unit in its range, and assume $\tau(b_n^2)>0$ for all $n\geq 1$.
For any bounded sequence $x_k\in A$, one has that $x_k\to 0$ in the strong operator topology inside $\pi_\tau(A)''$ if and only if $\|x_ky\|_{2,\tau} \to 0$ for all $y\in D_\tau$.
By the properties of the sequence $b_n$, this is true if and only if $\|x_kb_n\|_{2,\tau}\to 0$ for all $n$.
Define a state $\rho$ on $A$ via
\[
\rho(z)=\sum_{n=1}^\infty 2^{-n}\frac{\tau(b_n z b_n)}{\tau(b_n^2)},\quad z\in A.
\] 
For every bounded sequence $x_k\in A$, we have $x_k\to 0$ in the strong-$*$ topology in $\pi_\tau(A)''$ if and only if for all $n$,
\[
0=\lim_{k\to\infty} \big( \|x_kb_n\|_{2,\tau}^2+\|x_k^*b_n\|_{2,\tau}^2 \big) = \lim_{k\to\infty} \tau(b_nx_k^*x_kb_n+b_nx_kx_k^*b_n) 
\]
Thus we see that the strong-$*$ topology on $\pi_\tau(A)''$ is given on bounded sets via the semi-norm $\|\cdot\|_\rho^\#$, and hence $\pi_\tau(A)''\cong\pi_\rho(A)''$.
In particular, whether the above limit is zero for a bounded sequence $x_k\in A$ and for all $n\geq 1$ is independent of the choice of the sequence $b_n\in D_\tau$.
\end{remark}

\begin{example}
If $A$ is unital in the above construction, then $\tau$ is a bounded trace, and one can simply choose $b_n=1_A$.
With that choice one would have $\|\cdot\|_\rho^\#=\|\cdot\|_{2,\tau}$.
\end{example}

\begin{proposition} \label{prop:technical-stuff-trace}
Let $A$ be a separable \cstar-algebra with a state $\rho$.
Suppose that $\rho$ is constructed from a proper trace $\tau$ as in \autoref{rem:from-traces-to-states}.
Then one has $A_\omega\cap A'\subseteq \CN_\rho$ and the restriction of the quotient map $A_\omega\cap A'\to A_\rho^\omega\cap A'$ is onto.
Moreover this map factorizes through $F_\omega(A)$.
\end{proposition}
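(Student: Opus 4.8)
The plan is to establish the three assertions in turn: (a) the inclusion $A_\omega\cap A'\subseteq\CN_\rho$, so that the quotient map $q\colon\CN_\rho\to A_\rho^\omega$ restricts to $A_\omega\cap A'$; (b) this restriction lands in $A_\rho^\omega\cap A'$ and annihilates $A_\omega\cap A^\perp$, hence factors through $F_\omega(A)$; and (c) it is surjective. Assertion (b) is immediate: for $x\in A_\omega\cap A'$ and $a\in A$ one has $[x,a]=0$ already in $A_\omega$, so $q(x)$ commutes with $A$ in $A_\rho^\omega$; and $A_\omega\cap A^\perp\subseteq\CI_\rho$ by \autoref{prop:technical-stuff}~(i), so the restriction kills $A_\omega\cap A^\perp$ and descends to $F_\omega(A)=(A_\omega\cap A')/(A_\omega\cap A^\perp)$. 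The real work lies in (a) and (c), and I expect (c) to be the main obstacle.

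For (a), given a contraction $x=[(x_k)]\in A_\omega\cap A'$ and $y=[(y_k)]\in\CI_\rho$, I would show $\|x_ky_k\|_\rho^\#\to 0$ and $\|y_kx_k\|_\rho^\#\to 0$ along $\omega$, which is exactly $x\CI_\rho+\CI_\rho x\subseteq\CI_\rho$. Expanding $\|\cdot\|_\rho^\#$ through $\rho(z)=\sum_n 2^{-n}\tau(b_nzb_n)/\tau(b_n^2)$, the contributions in which $x_k^*x_k$ or $x_kx_k^*$ is flanked by $y_k$ and $y_k^*$ are dominated by $\|y_k\|_\rho^{\#2}$ using $x_k^*x_k\le 1$ and positivity; the outer contributions, typified by $\tau(b_nx_k^*y_k^*y_kx_kb_n)=\|y_kx_kb_n\|_{2,\tau}^2$, are estimated by commuting $x_k$ past $b_n$ and bounding $\|y_kb_nx_k\|_{2,\tau}\le\|y_kb_n\|_{2,\tau}$. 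The one delicate point is that $\tau$ is unbounded, so the commutator remainder $\|y_k[x_k,b_n]\|_{2,\tau}$ must be controlled without integrating non-integrable elements. I would arrange this by taking the $b_n$ inside the Pedersen ideal $\CP(A)$ — permissible by \autoref{rem:from-traces-to-states}, since the induced topology is independent of this choice — so that $\tau(b_n)<\infty$ and the cyclic identity $\|[x_k,b_n]\|_{2,\tau}^2=\tau\big([[b_n,x_k^*],x_k]\,b_n\big)\le 2\|[x_k,b_n]\|\,\tau(b_n)$ tends to $0$ along $\omega$ for each fixed $n$. A dominated-convergence argument over the summable weights $2^{-n}$, valid for $\omega$-limits, then upgrades these fixed-$n$ estimates to the claim.

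For surjectivity, the difficulty is that lifting $z\in A_\rho^\omega\cap A'$ and invoking the identification $A_\rho^\omega\cap A'\cong M^\omega\cap M'$ from \autoref{prop:technical-stuff}~(iii) yields, via Kaplansky density, only a sequence in $A$ that is central for the $\|\cdot\|_\rho^\#$-topology, which is far too weak to define an element of $A_\omega\cap A'$. Instead I would exploit that $\CI_\rho\subseteq\CN_\rho$ is a $\sigma$-ideal (\autoref{prop:technical-stuff}~(i)) in order to correct an arbitrary lift into an \emph{exactly} norm-central one. Pick any lift $\tilde z\in\CN_\rho$ of $z$; since $z$ commutes with $A$, one has $[\tilde z,a]\in\CI_\rho$ for all $a\in A$, in particular for a fixed dense sequence $(a^{(j)})_j\subseteq A$. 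Applying the $\sigma$-ideal property to the separable subalgebra $D=\cstar\big(A\cup\{\tilde z\}\cup\{[\tilde z,a^{(j)}]:j\in\IN\}\big)\subseteq\CN_\rho$ furnishes a positive contraction $e\in\CI_\rho\cap D'$ with $ec=c$ for all $c\in\CI_\rho\cap D$; thus $e$ commutes with $A$ and with $\tilde z$, and absorbs each commutator, $e[\tilde z,a^{(j)}]=[\tilde z,a^{(j)}]$. Setting $w=\tilde z-e\tilde z$, a one-line computation gives $[w,a^{(j)}]=[\tilde z,a^{(j)}]-e[\tilde z,a^{(j)}]-[e,a^{(j)}]\tilde z=0$ for every $j$, so $w$ commutes with the dense sequence and hence $w\in A_\omega\cap A'$; meanwhile $w-\tilde z=-e\tilde z\in\CI_\rho$ (as $\CI_\rho$ is an ideal of $\CN_\rho$), so $q(w)=z$. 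This shows the restricted map is onto, and this $\sigma$-ideal absorption is precisely the mechanism that overcomes the obstruction, replacing the inadequate $\|\cdot\|_\rho^\#$-central lift by a genuine norm-central one.
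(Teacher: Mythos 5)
Your proof is correct and follows essentially the same route as the paper: the surjectivity step via $\sigma$-ideal absorption of the commutators $[\tilde z,a]\in\CI_\rho$ and the correction $w=(1-e)\tilde z$ is identical to the paper's argument, and the reduction to $b_n\in\CP(A)$ is the same move the paper makes. The only difference is in the normalizer inclusion, where the paper avoids your commutator estimate $\|y_k[x_k,b_n]\|_{2,\tau}$ entirely by computing with the generalized limit trace $\tau^\omega$ directly in $A_\omega$, where $x$ commutes with $b_n$ exactly and traciality gives $\tau^\omega(y^*x^*b_n^2xy)=\tau^\omega(y^*b_nx^*xb_ny)\leq\|x\|^2\tau^\omega(b_nyy^*b_n)=0$; your sequence-level version with the cyclic identity for $\|[x_k,b_n]\|_{2,\tau}^2$ is more laborious but equally valid.
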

\begin{proof}
Let $\tau$ and $b_n\in D_\tau$ be as given in \autoref{rem:from-traces-to-states}, and assume that the state $\rho$ is given by the formula there.
As we have already noted above, the \cstar-algebra $\CI_\rho\subset A_\omega$ does not depend on the specific choice of the sequence $b_n$, so let us assume without loss of generality that $b_n\in\CP(A)$, which ensures $d_\tau(b_n)<\infty$.
Under this assumption, the restriction of $\tau$ to the hereditary subalgebra $\overline{b_nAb_n}$ is bounded.
Hence it follows for the generalized limit trace $\tau^\omega$ on $A_\omega$ induced by $\tau$ that in fact
\[
\tau^\omega(b_n a b_n) = \lim_{k\to\omega} \tau(b_n a_k b_n),\quad n\geq 1,
\]
for every positive element $a\in A_\omega$ represented by a bounded sequence of positive elements $a_k\in A$.

Now let $y\in\CI_\rho$ and $x\in A_\omega\cap A'$ be given.
We have to show $xy, yx\in\CI_\rho$.
By definition of $\rho$ and the above observation, the statement $xy\in\CI_\rho$ is equivalent to $\tau^\omega( b_n (y^*x^*xy+xyy^*x^*)b_n)=0$ for all $n\geq 1$.
Using the tracial condition for $\tau^\omega$ we compute for every $n\geq 1$ that
\[
\begin{array}{ccl}
\tau^\omega\big( b_n xy(xy)^* b_n \big) &
=& \tau^\omega\big( b_n  x y y^* x^* b_n \big) \\
&=& \tau^\omega\big( y^*x^*b_n^2xy\big) \\
&=& \tau^\omega\big( y^*b_nx^*xb_ny\big) \\
&\leq & \|x\|^2 \tau^\omega( y^*b_n^2y ) \\
&=& \|x\|^2 \tau^\omega(b_nyy^*b_n) \ = \ 0.
\end{array}
\]
Furthermore we have
\[
\tau^\omega\big( b_n (xy)^*xy b_n \big) \leq \|x\|^2 \tau^\omega( b_ny^*yb_n)=0.
\]
This confirms $xy\in\CI_\rho$.
The proof for $yx\in\CI_\rho$ is completely analogous.
Since $x$ and $y$ were arbitrary, we conclude $A_\omega\cap A'\subset\CN_\rho$.

The rest of the statement follows from the fact that $\CI_\rho$ is a $\sigma$-ideal; see \cite[Proposition 4.13]{AndoKirchberg16} or \cite[Proposition 1.6]{Kirchberg04}.
Let us recall the argument here.
Suppose that $z\in A_\rho^\omega\cap A'$ is an element.
Since it is in particular an element of $A_\rho^\omega$, it may be represented (by definition) as $z=y+\CI_\rho$ for some element $y\in\CN_\rho$.
The assumption that $z$ commutes with $A$ means that $[y,a]\in\CI_\rho$ for all $a\in A$.
Using that $\CI_\rho\subseteq\CN_\rho$ is a $\sigma$-ideal, we may find a positive contraction $e\in\CI_\rho\cap(A\cup\set{y})'$ such that $e[y,a]=[y,a]$ holds for all $a\in A$.
If we set $x=(1-e)y$, then evidently $z=x+\CI_\rho$, but furthermore we get for every $a\in A$ that
\[
[x,a]=[(1-e)y,a] = (1-e)[y,a] = 0.
\]
In other words, one has that $x\in A_\omega\cap A'$ represents $z\in A_\rho^\omega\cap A'$.
Since $z$ was arbitrary, it follows that the restriction of the quotient map $A_\omega\cap A'\to A_\rho^\omega\cap A'$ is indeed surjective.
Lastly, we have the inclusion $A_\omega\cap A^\perp\subseteq\CI_\rho\cap A'$ by \autoref{prop:technical-stuff}\ref{prop:technical-stuff:1}, which implies that the map indeed factors through $F_\omega(A)$.
\end{proof}

\begin{remark} \label{rem:trace-kernel}
Let $A$ be a separable simple \cstar-algebra with finitely many rays of extremal traces.
Let $\tau_1,\dots,\tau_m$ be a set of representatives for each ray of extremal traces, and set $\tau=\sum_{j=1}^m \tau_j$ as in \autoref{rem:finitely-many-traces}.
If $\tau^\omega$ is the associated generalized limit trace on $A_\omega$ and $a\in\CP(A)$ is a non-zero positive element in the Pedersen ideal, then we see that $\tau^\omega(ax)=0$ implies $\tau_0^\omega(ax)=0$ for all positive $x\in A_\omega\cap A'$ and all generalized limit traces $\tau_0^\omega$ with $0<\tau_0^\omega(a)<\infty$.

Let $\rho$ be a state constructed from $\tau$ as in \autoref{rem:from-traces-to-states}.
Comparing \autoref{prop:technical-stuff-trace} with \autoref{def:trace-kernel} yields that we have $(\CI_\rho\cap A')/(\CI_\rho\cap A^\perp) = \CJ_A$, which is the kernel of the natural surjective map $F_\omega(A)\to A_\rho^\omega\cap A'$.
Let us schematically illustrate the role of every intermediate object in all these constructions so far in a commutative diagram:
\[
\xymatrix{
0 \ar[r] & \CI_\rho\cap A^\perp \ar[r] \ar@{^{(}->}[d] & \CI_\rho\cap A' \ar[r] \ar@{^{(}->}[d] & \CJ_A \ar[r] \ar@{^{(}->}[d] & 0 \\
0 \ar[r] & A_\omega\cap A^\perp \ar[r] \ar@{^{(}->}[d] & A_\omega\cap A' \ar[r] \ar@{=}[d] & F_\omega(A) \ar[r] \ar@{->>}[d] & 0 \\
0 \ar[r] & \CI_\rho\cap A' \ar[r] \ar@{^{(}->}[d] & \CN_\rho\cap A' \ar[r] \ar@{^{(}->}[d] & A_\rho^\omega\cap A' \ar@{^{(}->}[d] \ar[r] & 0 \\
0 \ar[r] & \CI_\rho \ar[r] & \CN_\rho \ar[r] & A_\rho^\omega \ar[r] & 0
}
\]
Since $A_\omega\cap A^\perp \subset A_\omega\cap A'$ is $\Gamma$-invariant and $\CI_\rho\cap A' \subset A_\omega\cap A'$ is a $\Gamma$-$\sigma$-ideal with respect to every $\Gamma$-action on $A$ (\autoref{prop:technical-stuff}\ref{prop:technical-stuff:2}), it follows that the trace-kernel ideal $\CJ_A\subset F_\omega(A)$ is also a $\Gamma$-$\sigma$-ideal with respect to every $\Gamma$-action on $A$.
This is a consequence of the following more general fact:
\end{remark}

\begin{proposition}
Let $\Gamma$ be a countable group and $\beta: \Gamma\curvearrowright B$ an action on a \cstar-algebra.
Suppose that $I \subseteq J\subseteq B$ are two $\beta$-invariant ideals in $B$.
If $J$ is a $\Gamma$-$\sigma$-ideal in $B$, then $J/I$ is a $\Gamma$-$\sigma$-ideal in $B/I$.
\end{proposition}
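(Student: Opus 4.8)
The plan is to transport the defining property of a $\Gamma$-$\sigma$-ideal (see \autoref{rem:sigma-ideals}) across the quotient map $\pi\colon B\to B/I$. Since $I$ is $\beta$-invariant, $\pi$ is $\Gamma$-equivariant, and we denote by $\bar\beta$ the induced action on $B/I$. First one observes that $J/I=\pi(J)$ is a $\bar\beta$-invariant ideal of $B/I$, so that the statement makes sense. The crucial elementary fact to exploit is that, because $I\subseteq J$, one has $\pi^{-1}(J/I)=J$; in words, every element of $B$ whose image lies in $J/I$ already lies in $J$.

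Given a separable $\bar\beta$-invariant \cstar-subalgebra $\bar D\subseteq B/I$, the first step is to produce a separable $\beta$-invariant \cstar-subalgebra $D\subseteq B$ with $\pi(D)=\bar D$. To do this I would fix a countable dense subset $\set{\bar d_k}_{k\in\IN}$ of $\bar D$, choose a lift $d_k\in B$ with $\pi(d_k)=\bar d_k$ for each $k$, and let $D$ be the \cstar-subalgebra generated by all the $\Gamma$-translates $\set{\beta_g(d_k)\mid g\in\Gamma,\ k\in\IN}$. As $\Gamma$ is countable this uses only countably many generators, so $D$ is separable; it is $\beta$-invariant by construction; and since $\bar D$ is $\bar\beta$-invariant one checks $\pi(\beta_g(d_k))=\bar\beta_g(\bar d_k)\in\bar D$, so that $\pi(D)\subseteq\bar D$, while the dense set $\set{\bar d_k}$ lies in $\pi(D)$, whence $\pi(D)=\bar D$.

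Next, apply the hypothesis that $J$ is a $\Gamma$-$\sigma$-ideal in $B$ to this separable $\beta$-invariant subalgebra $D$. This yields a positive contraction $e\in(J\cap D')^\beta$ with $ec=c=ce$ for all $c\in J\cap D$. Put $\bar e=\pi(e)$. Then $\bar e$ is a positive contraction; it lies in $J/I=\pi(J)$; it is fixed by $\bar\beta$ since $e$ is fixed by $\beta$; and it commutes with $\bar D=\pi(D)$ since $e$ commutes with $D$. It therefore only remains to verify the absorption property $\bar e\bar c=\bar c=\bar c\bar e$ for all $\bar c\in(J/I)\cap\bar D$.

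This last step is the one place where care is needed, and is the crux of the argument: a priori a lift of $\bar c$ to $D$ need not lie in $J$, so the relation $ec=c$ could not be invoked directly. This is exactly where the hypothesis $I\subseteq J$ is essential. Since $\bar c\in\bar D=\pi(D)$, choose $c\in D$ with $\pi(c)=\bar c$; because $\pi(c)=\bar c\in J/I$, the identity $\pi^{-1}(J/I)=J$ forces $c\in J$, hence $c\in J\cap D$. Consequently $ec=c=ce$, and applying the homomorphism $\pi$ gives $\bar e\bar c=\bar c=\bar c\bar e$, as required. This completes the verification that $J/I$ is a $\Gamma$-$\sigma$-ideal in $B/I$.
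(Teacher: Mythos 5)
Your proof is correct and follows essentially the same route as the paper: lift the separable invariant subalgebra $\bar D$ to a separable $\beta$-invariant $D\subseteq B$, apply the $\Gamma$-$\sigma$-ideal property of $J$ there, and push the resulting contraction down through $\pi$. The only difference is that you spell out the construction of the lift and justify, via $\pi^{-1}(J/I)=J$, why every element of $(J/I)\cap\bar D$ lifts into $J\cap D$ --- a compatibility the paper simply asserts as part of the choice of $D_0$.
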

\begin{proof}
Denote by $\bar{\beta}: \Gamma\curvearrowright B/I$ the induced action under the quotient map $\pi: B\to B/I$.
Let $D\subset B/I$ be a separable $\bar{\beta}$-invariant \cstar-subalgebra.
Let $D_0\subset B$ be a separable $\beta$-invariant \cstar-subalgebra such that $\pi(D_0)=D$ and $\pi(J\cap D_0)=\pi(J)\cap D$.
Since $J$ is a $\Gamma$-$\sigma$-ideal, we find $e_0\in (J\cap D_0')^\beta$ such that $e_0c_0=c_0$ for all $c\in J\cap D_0$.
In particular, we have $e:=\pi(e_0)\in (\pi(J)\cap D')^{\bar{\beta}}$ and $ec=c$ for all $c\in \pi(J\cap D_0) = \pi(J)\cap D$.
Since $D$ was arbitrary, this implies that $\pi(J)=J/I \subseteq B/I$ is a $\Gamma$-$\sigma$-ideal.
\end{proof}

\begin{notation}
Below we will use the standard notation $\CR$ for the hyperfinite II$_1$ factor.
For example, it can be defined as the weak closure $\CR=\pi_\tau(M_{2^\infty})''$ of the CAR algebra $M_{2^\infty}=M_2\otimes M_2\otimes\dots$ under the GNS representation associated to its unique tracial state $\tau$.
\end{notation}

\begin{proposition} \label{prop:vN-stuff}
Suppose that $M$ is a von Neumann algebra that is isomorphic to a finite direct sum of hyperfinite type II factors.
Let $\omega$ be a free ultrafilter on $\mathbb N$.
Let $\Gamma$ be a countable amenable group and $\alpha:\Gamma\curvearrowright M$ an action.
\begin{enumerate}[label=\textup{(\roman*)},leftmargin=*]
\item $(M^\omega\cap M')^{\alpha^\omega}$ is a finite direct sum of II$_1$ factors.
In particular, there exists a unital $*$-homomorphism $M_k\to (M^\omega\cap M')^{\alpha^\omega}$ for all $k\geq 2$. 
\label{prop:vN-stuff:1}
\item Suppose $\alpha$ is properly outer.
For every action $\beta: \Gamma\curvearrowright\CR$, there exists a unital equivariant $*$-homomorphism $(\CR,\beta)\to (M^\omega\cap M',\alpha^\omega)$. 
\label{prop:vN-stuff:2}
\end{enumerate}
\end{proposition}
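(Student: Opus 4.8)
The plan is to reduce both parts to statements about the asymptotic centralizer of a \emph{single} hyperfinite type II factor carrying an action of a countable amenable group, and then to feed these into the structure theory for amenable group actions on injective factors.

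First I would decompose $M=\bigoplus_{i=1}^n M_i$ into its factor summands, with minimal central projections $z_1,\dots,z_n\in Z(M)$. Since each $z_i$ lies in $Z(M)\subseteq M^\omega\cap M'$ and every automorphism permutes minimal central projections, $\alpha$ induces a permutation action of $\Gamma$ on $\set{1,\dots,n}$ with $\alpha^\omega_g(z_i)=z_{g\cdot i}$. Compressing by the $z_i$ gives, equivariantly,
\[
M^\omega\cap M' \ = \ \bigoplus_{i=1}^n \big( M_i^\omega\cap M_i' \big),
\]
and the usual computation for induced actions shows that over each orbit $O$ with base point $i_0$ and stabilizer $\Gamma_O=\mathrm{Stab}(i_0)$ the fixed points collapse onto the base block, so that
\[
(M^\omega\cap M')^{\alpha^\omega} \ \cong \ \bigoplus_{O} \big( M_{i_O}^\omega\cap M_{i_O}' \big)^{\alpha^\omega|_{\Gamma_O}} .
\]
Each $\Gamma_O$ is a subgroup of $\Gamma$, hence countable amenable, acting on the hyperfinite II factor $M_{i_O}$. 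This reduces part \ref{prop:vN-stuff:1} to showing that for a countable amenable group $H$ acting on a hyperfinite II factor $N$, the algebra $(N^\omega\cap N')^{\gamma^\omega}$ is a II$_1$ factor. When $N$ is of type II$_\infty$ I would first compress by a finite projection $e\in N$ of full central support: the map $x\mapsto xe$ is an isomorphism $N^\omega\cap N'\cong (eNe)^\omega\cap (eNe)'$ with $eNe\cong\CR$, so the asymptotic centralizer is in any case a II$_1$ factor. The factoriality of the \emph{fixed point} algebra is the deep input, which I would draw from Ocneanu's theory of amenable actions on injective factors: the induced action $\gamma^\omega$ is free enough on the central sequence algebra that its fixed point algebra is again a II$_1$ factor. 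The ``in particular'' clause is then routine, since a II$_1$ factor contains a unital copy of $M_k$ (split the unit into $k$ equivalent projections and adjoin matrix units) and a diagonal map handles the finite direct sum.

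For part \ref{prop:vN-stuff:2} the orbit decomposition is used dually: by the standard induction correspondence, a unital equivariant $*$-homomorphism into $\bigoplus_i(M_i^\omega\cap M_i')$ is the same data as a $\Gamma_O$-equivariant unital $*$-homomorphism into the base block $M_{i_0}^\omega\cap M_{i_0}'$ for each orbit. Here I would note that the restriction of a properly outer action to a stabilizer stays outer on $N=M_{i_0}$, since proper outerness on $M$ evaluated at the fixed central projection $z_{i_0}$ yields outerness on the factor $z_{i_0}Mz_{i_0}=N$, which for a factor coincides with proper outerness. It thus suffices to produce, for an outer action $\gamma:H\curvearrowright N$ of an amenable group on a hyperfinite II factor, a unital equivariant $*$-homomorphism $(\CR,\beta)\to(N^\omega\cap N',\gamma^\omega)$. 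The key tool is Ocneanu's classification: $\gamma\otimes\beta^{\otimes\IN}$ is again an outer action on $N\otimes\CR^{\otimes\IN}\cong N$ with the same trace-scaling module as $\gamma$ (because $\beta^{\otimes\IN}$ is trace-preserving), so the two are cocycle conjugate, $\gamma\cc\gamma\otimes\beta^{\otimes\IN}$. Fixing such an identification, the inclusions $\iota_k:\CR\hookrightarrow N\otimes\CR^{\otimes\IN}\cong N$ of the $k$-th tensor factor are unital, equivariant up to the cocycle, and $\iota_k(\CR)$ commutes with $N\otimes\CR^{\otimes\set{1,\dots,k-1}}$, which exhausts $N$ as $k\to\infty$. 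Hence $x\mapsto[(\iota_k(x))_k]$ defines a unital embedding $\CR\to N^\omega\cap N'$; the cocycle consists of unitaries of $N$ and is absorbed in the limit, so the embedding intertwines $\beta$ and $\gamma^\omega$.

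The main obstacle in both parts is the von Neumann algebraic input rather than the bookkeeping: for \ref{prop:vN-stuff:1} the factoriality of the fixed point algebra of the induced action on the central sequence algebra, and for \ref{prop:vN-stuff:2} the cocycle conjugacy $\gamma\cc\gamma\otimes\beta^{\otimes\IN}$. Both rest on Ocneanu's classification of amenable group actions on injective factors and the associated noncommutative Rohlin theorem. A secondary point requiring care is the type II$_\infty$ case, where the action need not fix a finite projection; for \ref{prop:vN-stuff:2} this is sidestepped by running the tensor-absorption argument directly on $N$ (the module being unchanged), while for \ref{prop:vN-stuff:1} the compression argument suffices because no action has to be tracked through it.
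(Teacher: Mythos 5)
The genuine weak point is part (i). The statement makes no outerness assumption on $\alpha$, so the mechanism you invoke --- that ``the induced action $\gamma^\omega$ is free enough on the central sequence algebra that its fixed point algebra is again a II$_1$ factor'' --- is not available in the required generality: for an inner (e.g.\ trivial) action nothing is free, and the fixed point algebra of an arbitrary action on a II$_1$ factor need not be a factor (consider $\mathrm{Ad}(p-(1-p))$ on $\CR$, whose fixed point algebra is $p\CR p\oplus(1-p)\CR(1-p)$). What makes the conclusion true is the special structure of the action induced on the asymptotic centralizer: the normal subgroup of those $g$ for which $\alpha_g$ is inner (equivalently centrally trivial, as the factors are hyperfinite) acts trivially on $N^\omega\cap N'$, and the quotient group acts centrally freely, so Ocneanu's central freedom lemma applies to the quotient action. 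This reduction is precisely the content of \cite[Lemmas 4.1 and 4.2(1)]{MatuiSato14}, which is what the paper cites for part (i); your sketch skips it. Relatedly, your closing remark that for part (i) ``no action has to be tracked through'' the compression by a finite projection is not right: the compression identifies $N^\omega\cap N'$ with $\CR^\omega\cap\CR'$ only as a von Neumann algebra, and factoriality of the $\gamma^\omega$-fixed points still requires controlling the transported action.

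Part (ii) is essentially the paper's argument: reduce to a single orbit, observe that the stabilizer of a block acts outerly on that factor, invoke Ocneanu's theorem to absorb $\beta^{\bar\otimes\infty}$ up to cocycle conjugacy and thereby obtain a unital stabilizer-equivariant embedding of $(\CR,\beta)$ into the block's asymptotic centralizer (your tail-tensor-factor plus cocycle-absorption argument is the standard way to extract this), and then induce up to $\Gamma$. Your induction step is in fact cleaner than the paper's: the paper additionally symmetrizes over all base points of the orbit using a partition of unity of pairwise equivalent $\alpha^\omega$-fixed projections supplied by part (i), whereas your observation that a unital $\Gamma$-equivariant map into the orbit sum is the same data as a unital $\mathrm{Stab}$-equivariant map into one block, extended by $\phi_{g\cdot i_0}=\alpha^\omega_g\circ\phi_{i_0}\circ\beta_g^{-1}$, already suffices and avoids that bookkeeping.
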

\begin{proof}
\ref{prop:vN-stuff:1}: This can be proved in exactly the same way as in \cite[Lemmas 4.1 and 4.2(1)]{MatuiSato14}.
Matui--Sato's proof concerns the case where every direct summand of $M$ is of type II$_1$, but their proof works just as well in the general case.

\ref{prop:vN-stuff:2}: Notice that the claim passes to finite direct sums of such von Neumann algebras equipped with $\Gamma$-actions.
Hence let us assume without loss of generality that $\Gamma$ acts transitively via $\alpha$ on the minimal central projections of $M$.
Write $M=Me_1\oplus\dots\oplus Me_\ell$, where each element $e_j\in M$ is a minimal central projection.
Then we have a transitive action $\sigma: \Gamma\curvearrowright\set{1,\dots,\ell}$ such that $\alpha_g(e_j)=e_{\sigma_g(j)}$ for all $g\in\Gamma$ and $j=1,\dots,\ell$.
For every $j=1,\dots,\ell$, let $H_j=\set{ h\in\Gamma \mid \sigma_h(j)=j }$.
Note that we have $|\Gamma/H_j|=\ell$ because the map $\Gamma/H_j\to\set{1,\dots,\ell}$ given by $gH_j\mapsto\sigma_g(j)$ is well-defined and injective by definition, and surjective by the assumption that $\sigma$ is transitive.
We see that for every $j=1,\dots,\ell$, the action $\alpha|_{H_j}$ leaves the factor $Me_j$ invariant and is outer by assumption.
By Ocneanu's theorem \cite{Ocneanu85}, the action on this corner equivariantly absorbs the action $(\beta|_{H_j})^{\bar{\otimes}\infty}: H_j\curvearrowright\CR^{\bar{\otimes}\infty}$ up to cocycle conjugacy.
Hence we find a unital equivariant $*$-homomorphism $\psi_j: (\CR,\beta|_{H_j})\to (e_jM^\omega\cap M',\alpha^\omega|_{H_j})$.

By the first part we may pick a partition of unity $1=\sum_{j=1}^\ell p_j$, where the summands are pairwise equivalent projections in $(M^\omega\cap M')^{\alpha^\omega}$.
Without loss of generality, we may assume that they also commute with the range of $\alpha^\omega_g\circ\psi_j$ for all $g\in\Gamma$ and $j=1,\dots,\ell$.

We define $\psi: \CR\to M^\omega\cap M'$ via
\[
\psi(x) =  \sum_{j=1}^\ell   \sum_{\bar{g}\in\Gamma/H_j} e_{\sigma_{\bar{g}}(j)}\cdot p_j \cdot \alpha^\omega_g(\psi_j(\beta_g^{-1}(x)).
\]
We first observe for every $j=1,\dots,\ell$ that if $g_2=g_1h$ for some $h\in H_j$, then we have $\alpha^\omega_{g_2}\circ\psi_j\circ\beta_{g_2}^{-1}=\alpha^\omega_{g_1}\circ\psi_j\circ\beta_{g_1}^{-1}$ since $\psi_j$ is $H_j$-equivariant.
In particular, the above sum is well-defined and $\psi$ becomes a unital $*$-homomorphism.

We compute for all $x\in\CR$ and $g_0\in\Gamma$ that
\[
\begin{array}{ccl}
\psi(\beta_{g_0}(x)) &=& \dst \sum_{j=1}^\ell   \sum_{\bar{g}\in\Gamma/H_j} e_{\sigma_{\bar{g}}(j)}\cdot p_j \cdot \alpha^\omega_g(\psi_j(\beta_{g_0^{-1}g}^{-1}(x)) \\
&=&  \dst \sum_{j=1}^\ell   \sum_{\bar{g}\in\Gamma/H_j} e_{\sigma_{\overline{g_0g}}(j)}\cdot p_j \cdot \alpha^\omega_{g_0g}(\psi_j(\beta_g^{-1}(x)) \\
&=& \dst \alpha^\omega_{g_0}(\psi(x)),
\end{array}
\]
proving that $\psi$ is $\Gamma$-equivariant.
This finishes the proof.
\end{proof}

\begin{theorem} \label{thm:equivariant-Z-stability}
Let $A$ be a non-elementary separable simple nuclear \cstar-algebra with very weak comparison.
Suppose that $A$ is finite and has finitely many rays of extremal traces.
Let $\Gamma$ be a countable amenable group.
Then every action $\alpha:\Gamma\curvearrowright A$ is equivariantly $\CZ$-stable.
\end{theorem}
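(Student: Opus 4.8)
The plan is to verify the standard sufficient criterion for equivariant Jiang--Su stability from \cite{Szabo18ssa}: in combination with R{\o}rdam--Winter's universal description \cite{RordamWinter10} of the prime dimension drop algebra $Z_{k,k+1}$, it suffices to produce, for a single $k\geq 2$, a unital $*$-homomorphism $Z_{k,k+1}\to F_\omega(A)^{\tilde{\alpha}_\omega}$. Following the Matui--Sato strategy, I would obtain this in two movements: first a tracially large equivariant c.p.c.\ order zero map $M_k\to F_\omega(A)^{\tilde{\alpha}_\omega}$, and then an upgrade to the full dimension drop algebra using the equivariant property (SI) guaranteed by \autoref{cor:equivariant-SI}.

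To set up the von Neumann picture, let $\tau_1,\dots,\tau_m$ represent the rays of extremal traces and put $\tau=\sum_{j}\tau_j$, which is $\alpha$-quasi-invariant by \autoref{rem:finitely-many-traces}. Let $\rho$ be a state built from $\tau$ as in \autoref{rem:from-traces-to-states}, and set $M=\pi_\tau(A)''\cong\bigoplus_{j=1}^m \pi_{\tau_j}(A)''$. Since $A$ is non-elementary, simple, nuclear and finite, $M$ is a finite direct sum of hyperfinite type II factors, and the induced action $\alpha^\omega$ on $M^\omega$ is well-defined by quasi-invariance. By \autoref{rem:trace-kernel}, the trace-kernel ideal $\CJ_A\subset F_\omega(A)$ of \autoref{def:trace-kernel} is precisely the kernel of a natural equivariant surjection $F_\omega(A)\to A_\rho^\omega\cap A'$, and the latter is equivariantly isomorphic to $M^\omega\cap M'$ by \autoref{prop:technical-stuff}\ref{prop:technical-stuff:3}; moreover $\CJ_A$ is a $\Gamma$-$\sigma$-ideal.

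For the order zero map, \autoref{prop:vN-stuff}\ref{prop:vN-stuff:1} supplies a unital $*$-homomorphism $M_k\to (M^\omega\cap M')^{\alpha^\omega}$, i.e.\ a unital copy of $M_k$ inside $\big(F_\omega(A)/\CJ_A\big)^{\tilde{\alpha}_\omega}$ on which $\Gamma$ acts trivially. Because $\CJ_A$ is a $\Gamma$-$\sigma$-ideal, the equivariant quotient map is strongly locally semi-split (\autoref{rem:sigma-ideals}), so this copy lifts to an equivariant c.p.c.\ order zero map $\psi\colon M_k\to F_\omega(A)$; equivariance against the trivial action on $M_k$ forces the range into $F_\omega(A)^{\tilde{\alpha}_\omega}$, while the lift being unital modulo $\CJ_A$ gives $1-\psi(1)\in\CJ_A$, i.e.\ $1-\psi(1)$ is tracially null. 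Writing $\psi(x)=h\pi(x)$ in the order zero structure with $h=\psi(1)$, the powers satisfy $\psi(e_{11})^j=h^j\pi(e_{11})\equiv\pi(e_{11})$ modulo $\CJ_A$, a projection of constant tracial size, so $f:=\psi(e_{11})$ is tracially supported at $1$ while $e:=1-\psi(1)$ is tracially null. Equivariant property (SI) then produces $s\in F_\omega(A)^{\tilde{\alpha}_\omega}$ with $fs=s$ and $s^*s=e$, and feeding $\psi$ together with $s$ into the presentation of $Z_{k,k+1}$, exactly as in \cite{MatuiSato12acta,MatuiSato14}, assembles the desired unital $*$-homomorphism $Z_{k,k+1}\to F_\omega(A)^{\tilde{\alpha}_\omega}$.

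The main obstacle, and where all the non-unital, unbounded-trace machinery of the earlier sections is really needed, is the equivariant identification of $F_\omega(A)/\CJ_A$ with the von Neumann central sequence algebra $M^\omega\cap M'$ and the lifting of the matrix embedding through the $\Gamma$-$\sigma$-ideal $\CJ_A$ while keeping everything inside the fixed-point algebra; once this is in place, the passage from the tracially large order zero map to $Z_{k,k+1}$ is the familiar Matui--Sato (SI) mechanism, now carried out verbatim in $F_\omega(A)^{\tilde{\alpha}_\omega}$.
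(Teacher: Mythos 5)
Your proposal is correct and follows essentially the same route as the paper's own proof: the same quasi-invariant trace $\tau$ from \autoref{rem:finitely-many-traces}, the same equivariant surjection $F_\omega(A)\to M^\omega\cap M'$ with kernel the $\Gamma$-$\sigma$-ideal $\CJ_A$, the same lift to an equivariant order zero map $M_k\to F_\omega(A)^{\tilde{\alpha}_\omega}$, the same verification that $\phi(e_{1,1})$ is tracially supported at $1$ while $1-\phi(1_k)$ is tracially null, and the same combination of \autoref{cor:equivariant-SI} with the R{\o}rdam--Winter presentation of $Z_{k,k+1}$ and \cite{Szabo18ssa}. The only cosmetic difference is that you invoke a single $k$ whereas the paper lets $k$ range to embed $\CZ$ unitally; both are standard.
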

\begin{proof}
Let $\tau$ be a trace on $A$ chosen as in \autoref{rem:finitely-many-traces}.
It is quasi-invariant with respect to every automorphism on $A$, so in particular $\alpha$-quasi-invariant.
Set $M=\pi_\tau(A)''$, which is a finite direct sum of hyperfinite factors of type II.
Due to \autoref{rem:trace-kernel} and \autoref{prop:technical-stuff}, we see that there is a surjective and equivariant $*$-homomorphism
\[
(F_\omega(A),\tilde{\alpha}_\omega) \to (M^\omega\cap M',\alpha^\omega),
\]
the kernel of which is equal to $\CJ_A$.
Let $k\geq 2$.
By \autoref{prop:vN-stuff}, there is a unital copy $M_k \subset (M^\omega\cap M')^{\alpha^\omega}$.
Since $\CJ_A$ is a $\Gamma$-$\sigma$-ideal, it follows from \autoref{rem:sigma-ideals} that there is a c.p.c.\ order zero map $\phi: M_k\to F_\omega(A)^{\tilde{\alpha}_\omega}$ lifting such an inclusion, in particular satisfying $1-\phi(1_k)\in\CJ_A$.
Invoking \autoref{rem:trace-kernel}, we see that $1-\phi(1_k)$ is tracially null.
On the other hand, $\phi(e_{1,1})$ is tracially supported at 1 for the following reason.
Firstly, since $e_{1,1}$ is a projection, it follows that $\phi(e_{1,1})-\phi(e_{1,1})^m \in \CJ_A$ for all $m\geq 2$, so $\phi(e_{1,1})$ agrees with all its positive powers on all generalized limit traces induced on $F_\omega(A)$.
Secondly, for every $a\in A_+$ and every generalized limit trace $\theta^\omega$ on $A_\omega$ with $\theta^\omega(a)=1$, the map $\theta^\omega_a\circ\phi$ will become a tracial state on $M_k$, which must coincide with the unique one.
Therefore we have $\inf_{m\geq 1} \theta^\omega_a(\phi(e_{1,1})^m)= \theta^\omega_a(\phi(e_{1,1}))=\frac1k$.

Since $A$ has property (SI) relative to $\alpha$ due to \autoref{cor:equivariant-SI}, it follows that there is a contraction $s\in F_\omega(A)^{\tilde{\alpha}_\omega}$ with $\phi(e_{1,1})s=s$ and $s^*s=1-\phi(1_k)$.
By \cite[Proposition 5.1]{RordamWinter10} this gives rise to a unital $*$-homomorphism $Z_{k,k+1}\to F_\omega(A)^{\tilde{\alpha}_\omega}$.
Since $k\geq 2$ was arbitrary, we can find a unital embedding $\CZ\to F_\omega(A)^{\tilde{\alpha}_\omega}$ of the Jiang--Su algebra.
The conclusion follows by \cite[Corollary 3.8]{Szabo18ssa}.
\end{proof}

For our further applications below we need to appeal to the theory of Rokhlin dimension with commuting towers relative to subgroups; see \cite{Szabo18rd} for the relevant results about them used here.

\begin{lemma} \label{lem:dimrokc-2}
Let $A$ be a separable simple nuclear finite $\CZ$-stable \cstar-algebra.
Suppose that $A$ has finitely many rays of extremal traces.
Let $\Gamma$ be a countable amenable group and $H\subset\Gamma$ a normal subgroup with $\Gamma/H\cong\IZ$.
Then for every strongly outer action $\alpha:\Gamma\curvearrowright A$, we have $\dimrokc(\alpha,H)\leq 2$.
\end{lemma}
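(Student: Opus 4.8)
The plan is to reduce the statement to a von Neumann algebraic Rokhlin property for the quotient $\IZ$-action, and then to transport this back to the central sequence algebra using equivariant property (SI). First I would fix a trace $\tau$ on $A$ as in \autoref{rem:finitely-many-traces}, which is quasi-invariant under every automorphism in the image of $\alpha$, and set $M=\pi_\tau(A)''$. Since $A$ is finite with finitely many rays of extremal traces, $M$ is a finite direct sum of hyperfinite type II$_1$ factors, and $\alpha$ extends to an action on $M$ that is properly outer by strong outerness together with \autoref{prop:strong-outer}. Exactly as in the proof of \autoref{thm:equivariant-Z-stability}, the combination of \autoref{rem:trace-kernel} and \autoref{prop:technical-stuff} provides a surjective equivariant $*$-homomorphism $(F_\omega(A),\tilde{\alpha}_\omega)\to (M^\omega\cap M',\alpha^\omega)$ whose kernel is the trace-kernel ideal $\CJ_A$, a $\Gamma$-$\sigma$-ideal.

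Next I would restrict attention to the normal subgroup $H$. The restricted action $\alpha|_H$ is a properly outer action of a countable amenable group on $M$, so \autoref{prop:vN-stuff}\ref{prop:vN-stuff:1}, applied with $H$ in place of $\Gamma$, shows that $N:=(M^\omega\cap M')^{\alpha^\omega|_H}$ is a finite direct sum of II$_1$ factors. A generator of $\Gamma/H\cong\IZ$ induces an automorphism on $N$, which one verifies to be properly outer using strong outerness of $\alpha_g$ for $g\in\Gamma\setminus H$. By the classical Rokhlin lemma for aperiodic automorphisms of finite sums of hyperfinite II$_1$ factors (\cite{Connes75, Ocneanu85}, with the possible permutation of the finitely many summands handled as in \autoref{prop:vN-stuff}\ref{prop:vN-stuff:2}), the induced $\IZ$-action on $N$ has the Rokhlin property: for every $p$ there are two towers of projections of heights $p$ and $p+1$, permuted cyclically by the generator and summing to $1$.

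Then I would lift this structure to $F_\omega(A)$. Because $\CJ_A$ is a $\Gamma$-$\sigma$-ideal, the equivariant quotient map above is strongly locally semi-split in the sense of \autoref{rem:sigma-ideals}, which lets one lift the finite-dimensional Rokhlin system, regarded as a $\Gamma/H$-equivariant order zero datum landing in the $H$-fixed points, to a pair of commuting c.p.c.\ order zero maps into $F_\omega(A)^{\tilde{\alpha}_\omega|_H}$ whose ranges are cyclically permuted by $\tilde{\alpha}_\omega$. The sum of all the lifted tower elements differs from $1$ by an element $r$ that, by construction, lies in $\CJ_A$ and is therefore tracially null. Equivariant property (SI), available through \autoref{cor:equivariant-SI}, is exactly what absorbs this defect: feeding one of the lifted tower elements, which is tracially supported at $1$, together with $r$ into (SI) yields a contraction implementing the missing piece, out of which one assembles a third commuting Rokhlin tower covering the uncovered remainder. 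Together with the two lifted towers this produces a system of three commuting Rokhlin towers witnessing $\dimrokc(\alpha,H)\leq 2$; the numerical bound reflects the two consecutive heights forced by $\IZ$ plus the single correction tower. As usual, the passage from approximate to exact relations is carried out by the $\eps$-test.

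The hard part will be this last step: organizing the tracially null remainder $r$ into an honest Rokhlin tower that simultaneously lies in the $H$-fixed point algebra, commutes with the two lifted towers, and is correctly permuted by the generator of $\Gamma/H$, all while keeping the partition-of-unity relation exact. This is precisely where equivariant property (SI) does the essential work, and where the bookkeeping of the relative commuting-tower definition from \cite{Szabo18rd} must be matched carefully. A cleaner route may well be to invoke a ready-made criterion from \cite{Szabo18rd} that reduces $\dimrokc(\alpha,H)\leq 2$ directly to the existence of the von Neumann Rokhlin towers above together with property (SI); in that case the first two steps, combined with \autoref{cor:equivariant-SI}, would suffice and most of the technical lifting would be subsumed into that reference.
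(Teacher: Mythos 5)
Your opening moves coincide with the paper's: fix $\tau$ as in \autoref{rem:finitely-many-traces}, set $M=\pi_\tau(A)''$, use \autoref{rem:trace-kernel} and \autoref{prop:technical-stuff} to get the equivariant surjection $(F_\omega(A),\tilde{\alpha}_\omega)\to(M^\omega\cap M',\alpha^\omega)$ with kernel the $\Gamma$-$\sigma$-ideal $\CJ_A$, lift through it, and repair the partition of unity with equivariant property (SI) via \autoref{cor:equivariant-SI}. Where you diverge is in how the tower structure is produced. You build exact Rokhlin towers for the induced $\IZ$-action on $N=(M^\omega\cap M')^{\alpha^\omega|_H}$ and then try to lift and patch them by hand. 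The paper instead never constructs towers in the von Neumann setting at all: it invokes \autoref{prop:vN-stuff}\ref{prop:vN-stuff:2} to embed $(M_N,\operatorname{Ad}(1\oplus\nu))$ equivariantly for the \emph{full} group $\Gamma$ (with $\nu$ a unitary representation trivial on $H$), lifts to an order zero map, applies (SI) once to produce the single contraction $s$, and thereby obtains a unital equivariant $*$-homomorphism from the universal algebra $(Z_{N,N+1}^U,\delta^\nu)$ of \cite{Szabo18rd} into $(F_\omega(A),\tilde{\alpha}_\omega)$. The three commuting approximate towers already live inside $Z_{N,N+1}^U$ by \cite[Section 2]{Szabo18rd}, so the dimension bound follows by functoriality.

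The gap in your version is exactly the step you flag as ``the hard part'' and then do not carry out: converting the tracially null defect $r$ and the (SI) contraction into a third tower that simultaneously (a) lies in $F_\omega(A)^{\tilde{\alpha}_\omega|_H}$, (b) commutes with the two lifted towers, (c) is cyclically permuted by the generator of $\Gamma/H$ with the correct matching of heights $p$ and $p+1$ (note that the classical two-tower Rokhlin decomposition is \emph{not} of the form of two $\IZ/k\IZ$-cyclically permuted systems required by $\dimrokc$, so even the two lifted towers need nontrivial reorganization), and (d) restores the exact partition of unity. This is precisely the combinatorial content that $Z_{N,N+1}^U$ and \cite[Section 2]{Szabo18rd} encapsulate, and (SI) alone does not deliver it: (SI) hands you one contraction $s$ with $fs=s$ and $s^*s=e$, and extracting commuting, correctly equivariant towers from it requires the algebraic relations of $Z_{N,N+1}^U$. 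A secondary unproved assertion is that the generator of $\Gamma/H$ acts properly outerly (aperiodically) on $N$; this does not follow formally from strong outerness of $\alpha_g$ on $M$ and would require an Ocneanu-type central freeness argument. Your closing suggestion to invoke a ready-made criterion from \cite{Szabo18rd} is the right instinct, but the criterion actually available there is the equivariant embeddability of $(Z_{N,N+1}^U,\delta^\nu)$ --- which needs the full-group equivariant matrix embedding of \autoref{prop:vN-stuff}\ref{prop:vN-stuff:2}, not merely Rokhlin towers for the quotient $\IZ$-action on the $H$-fixed points.
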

\begin{proof}
The proof of this is analogous to \cite[Theorem 2.14]{Szabo18ssa4}, but we shall outline the argument and at which point it differs from the original proof.
Let us first denote $F_\omega(A)^H:=F_\omega(A)^{\tilde{\alpha}_\omega|_H}$ and observe that it carries an action $\gamma: \Gamma/H \cong \IZ \curvearrowright F_\omega(A)^H$ via $\gamma_{gH}(x)=\tilde{\alpha}_{g,\omega}(x)$.
The claim $\dimrokc(\alpha,H)\leq 2$ means that, given any $k\geq 2$, there exist three equivariant c.p.c.\ order zero maps 
\[
\kappa_0, \kappa_1, \kappa_2: \big( \CC(\IZ/k\IZ), \text{shift} \big) \to \big( F_\omega(A)^H, \gamma \big)
\]
with $\kappa_0(1)+\kappa_1(1)+\kappa_1(2)=1$.

Let us consider for every $N\geq 1$ the universal unital \cstar-algebra $Z_{N,N+1}^U$ generated by a contraction $v\in Z_{N,N+1}^U$ and the image of a c.p.c.\ order zero map $\psi: M_N\to Z_{N,N+1}^U$ satisfying $1-\psi(1_N)=v^*v$ and $\psi(e_{1,1})v=v$.
Given a unitary representation $\nu: \Gamma\to\CU(M_{N-1})$, we denote by $\delta^\nu: \Gamma\curvearrowright Z_{N,N+1}^U$ the action determined by $\delta^\nu_g(v)=v$ and $\delta^\nu_g\circ\psi=\psi\circ\operatorname{Ad}(1\oplus\nu_g)$ for all $g\in\Gamma$.
It occupied most of \cite[Section 2]{Szabo18rd} to prove that for any $k\geq 2$, one can find unitary representations $\nu$ that are trivial on $H$ and so that there are three approximate order zero maps from $\CC(\IZ/k\IZ)$ into $Z_{N,N+1}^U$ modeling the above properties approximately.
The logical turning point in the proof of \cite[Theorem 2.14]{Szabo18ssa4} was to apply \cite[Lemma 2.9]{Szabo18ssa4}, which says that $(Z_{N,N+1}^U, \delta^\nu)$ equivariantly maps into $(F_\omega(A),\tilde{\alpha}_\omega)$, but under stronger assumptions on $A$ than stated here.

In summary, in order to show our claim, it suffices to generalize \cite[Lemma 2.9]{Szabo18ssa4}, i.e., to prove that for arbitrary $N\geq 2$ and $\nu: \Gamma\to\CU(M_{N-1})$, we can find a unital equivariant $*$-homomorphism from $(Z_{N,N+1}^U, \delta^\nu)$ to $(F_\omega(A),\tilde{\alpha}_\omega)$.
Let $\tau$ be a trace on $A$ chosen as in \autoref{rem:finitely-many-traces}.
It is quasi-invariant with respect to every automorphism on $A$, so in particular $\alpha$-quasi-invariant.
Set $M=\pi_\tau(A)''$, which is a finite direct sum of hyperfinite factors of type II.
Due to \autoref{rem:trace-kernel} and \autoref{prop:technical-stuff}, we see that there is a surjective and equivariant $*$-homomorphism
\[
(F_\omega(A),\tilde{\alpha}_\omega) \to (M^\omega\cap M',\alpha^\omega),
\]
the kernel of which is equal to $\CJ_A$.
By \autoref{prop:vN-stuff}\ref{prop:vN-stuff:2} we may find a unital equivariant $*$-homomorphism $(M_N, \operatorname{Ad}(1\oplus\nu)) \to (M^\omega\cap M', \alpha^\omega)$.
Since $\CJ_A$ is a $\Gamma$-$\sigma$-ideal, it follows from \autoref{rem:sigma-ideals} that there is an equivariant c.p.c.\ order zero map $\phi: (M_N, \operatorname{Ad}(1\oplus\nu)) \to (F_\omega(A),\tilde{\alpha}_\omega)$ lifting such an inclusion, in particular satisfying $1-\phi(1_N)\in\CJ_A$.
With the same justification as in the proof of \autoref{thm:equivariant-Z-stability}, we see that $\phi(e_{1,1})$ is tracially supported at 1.
Since $A$ has property (SI) relative to $\alpha$ due to \autoref{cor:equivariant-SI}, it follows that there is a contraction $s\in F_\omega(A)^{\tilde{\alpha}_\omega}$ with $\phi(e_{1,1})s=s$ and $s^*s=1-\phi(1_N)$.
By definition of $Z_{N,N+1}^U$ this gives rise to a unique unital $*$-homomorphism $\mu: Z_{N,N+1}^U \to F_\omega(A)$ satisfying $\mu\circ\psi=\phi$ and $\mu(v)=s$.
In particular we can see right away that $\mu$ is equivariant with respect to $\delta^\nu$ and $\tilde{\alpha}_\omega$, which shows the claim.
\end{proof}

\begin{notation}[cf.\ {\cite[Definition B]{Szabo18ssa4}}]
We denote by $\FC$ the smallest class of countable groups with $\set{1}\in\FC$, such that $\FC$ is closed under isomorphism, countable directed unions, and extensions by $\IZ$.

Below we denote by $\cc$ the relation of cocycle conjugacy between actions on \cstar-algebras.
\end{notation}

\begin{theorem} \label{thm:extending-ssa4}
Let $A$ be a separable simple nuclear finite \cstar-algebra.
Suppose that $A$ has finitely many rays of extremal traces.
Let $\Gamma$ be a countable amenable group and $H\subset\Gamma$ a normal subgroup such that $\Gamma/H\in \FC$.
Let $\gamma:\Gamma\curvearrowright\CD$ be a semi-strongly self-absorbing action
and let $\alpha:\Gamma\curvearrowright A$ be a strongly outer action. 
If $\alpha|_H \cc (\alpha\otimes\gamma)|_H$, then $\alpha\cc\alpha\otimes\gamma$.
\end{theorem}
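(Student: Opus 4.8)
The plan is to run an induction over the inductively defined class $\FC$, faithfully following the scheme of \cite[Theorem D]{Szabo18ssa4}, with \autoref{lem:dimrokc-2} supplying the one genuinely new input. Before starting the induction I would record a preliminary reduction: the hypothesis $\alpha|_H\cc(\alpha\otimes\gamma)|_H$ entails in particular a $*$-isomorphism $A\cong A\otimes\CD$, and since $\CD$ is strongly self-absorbing it is $\CZ$-stable; hence $A$ is $\CZ$-stable. Together with the standing assumptions this places $A$ in the scope of \autoref{lem:dimrokc-2} (and, via its proof, of \autoref{cor:equivariant-SI}). The engine driving the non-$\IZ$ parts of the argument is the semi-strongly self-absorbing calculus of \cite{Szabo18ssa}: for semi-strongly self-absorbing $\gamma$ and a subgroup $\Lambda\leq\Gamma$, one has $\alpha|_\Lambda\cc(\alpha\otimes\gamma)|_\Lambda$ precisely when there is a unital $\Lambda$-equivariant $*$-homomorphism $(\CD,\gamma|_\Lambda)\to(F_\omega(A),\tilde{\alpha}_\omega|_\Lambda)$, and this property of $\Lambda$ is stable under passing to directed unions of subgroups.

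The induction is on the well-founded rank of $Q:=\Gamma/H$ in $\FC$, proving the implication for all data with $\Gamma/H\cong Q$. The base case $Q=\set{1}$ means $\Gamma=H$, where the hypothesis is literally the conclusion, and the isomorphism-closure of $\FC$ is handled trivially. For the directed-union step I write $Q=\bigcup_n Q_n$ as an increasing union with each $Q_n\in\FC$ of strictly smaller rank, and let $H_n\leq\Gamma$ be the preimage of $Q_n$, so that $H\trianglelefteq H_n$, $H_n/H\cong Q_n$, and $\Gamma=\bigcup_n H_n$. Applying the induction hypothesis to each pair $(H_n,H)$ yields $\alpha|_{H_n}\cc(\alpha\otimes\gamma)|_{H_n}$ for all $n$, and the directed-union permanence from \cite{Szabo18ssa} then upgrades this to $\alpha\cc\alpha\otimes\gamma$ by a reindexing argument inside $F_\omega(A)$.

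The crux is the extension-by-$\IZ$ step. Here $Q$ sits in a short exact sequence $1\to N\to Q\to\IZ\to 1$ with $N\in\FC$ of strictly smaller rank; since $N$ is normal in $Q$, its preimage $H'\leq\Gamma$ is normal in $\Gamma$ with $H\trianglelefteq H'$, $H'/H\cong N$, and $\Gamma/H'\cong\IZ$. Applying the induction hypothesis to the pair $(H',H)$ --- noting that $\alpha|_{H'}$ is again strongly outer and $\gamma|_{H'}$ is again semi-strongly self-absorbing --- gives $\alpha|_{H'}\cc(\alpha\otimes\gamma)|_{H'}$. By \autoref{lem:dimrokc-2} we have $\dimrokc(\alpha,H')\leq 2<\infty$, and I would then invoke the relative absorption theorem from \cite{Szabo18rd} (used also in \cite[Theorem D]{Szabo18ssa4}): for $\Gamma/H'\cong\IZ$, finiteness of $\dimrokc(\alpha,H')$ promotes $H'$-equivariant $\gamma$-absorption to full $\Gamma$-equivariant $\gamma$-absorption, i.e.\ $\alpha\cc\alpha\otimes\gamma$. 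This closes the induction.

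The main obstacle is concentrated entirely in the $\IZ$-step, and has in fact already been dispatched by \autoref{lem:dimrokc-2}, whose proof is where equivariant property (SI) (\autoref{cor:equivariant-SI}), the equivariant surjection $F_\omega(A)\to(M^\omega\cap M',\alpha^\omega)$ onto the von Neumann ultrapower, and Ocneanu-type absorption (\autoref{prop:vN-stuff}) are combined. Beyond that, the remaining care is bookkeeping: verifying that normality of the intermediate subgroups is preserved (which is exactly why one uses $N\trianglelefteq Q$ rather than an arbitrary subgroup in the $\IZ$-step), that strong outerness and the semi-strongly self-absorbing property descend to all relevant restrictions, and that the directed-union reindexing yields a genuinely $\Gamma$-equivariant embedding. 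Since these are precisely the verifications performed in \cite[Theorem D]{Szabo18ssa4}, the argument goes through once \autoref{lem:dimrokc-2} is in place.
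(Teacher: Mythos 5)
Your proposal is correct and follows essentially the same route as the paper: an induction over the generating operations of $\FC$ (trivial group, directed unions, extensions), with the crucial $\IZ$-step handled by combining \autoref{lem:dimrokc-2} with the relative absorption theorem \cite[Theorem A]{Szabo18rd}, and the directed-union step handled by the permanence properties of semi-strongly self-absorbing absorption. The only differences are cosmetic or supplementary: the paper organizes the induction via the class $\FF$ of quotient groups for which the statement holds rather than via well-founded rank, it explicitly records the unitary regularity of $\gamma$ (deduced from its equivariant $\CZ$-stability) required to invoke \cite[Theorem A]{Szabo18rd}, and your preliminary observation that the hypothesis $\alpha|_H\cc(\alpha\otimes\gamma)|_H$ forces $A\cong A\otimes\CD$ and hence $\CZ$-stability of $A$ (needed to apply \autoref{lem:dimrokc-2}) is a useful point the paper leaves implicit.
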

\begin{proof}
We proceed similarly as in the proof of \cite[Theorem 3.1]{Szabo18ssa4}.
Let $\FF$ be the class of all countable amenable groups $\Lambda$ such that the conclusion of this theorem holds whenever one has $\Gamma/H\cong\Lambda$ instead of $\Gamma/H\in\FC$.
Evidently the trivial group is in $\FF$, and $\FF$ is closed under extensions.
Moreover it follows directly from \cite[Theorem 5.6(ii)]{Szabo17ssa3} that $\FF$ is closed under countable directed unions.
By the definition of the class $\FC$, it suffices to show $\IZ\in\FF$ in order to obtain $\FC\subseteq\FF$, which will prove the claim.

Under the assumption that $\Gamma/H\cong\IZ$, we have by \autoref{lem:dimrokc-2} that $\dimrokc(\alpha,H)\leq 2$.
By virtue of \cite[Theorem 5.2]{Sato16} (cf.\ \cite[Theorem 1.16]{Szabo18ssa4}), the action $\gamma$ is equivariantly $\CZ$-stable, and hence unitarily regular by \cite[Proposition 2.19]{Szabo18ssa2}.
Thus the claim follows directly from \cite[Theorem A]{Szabo18rd}; see also \cite[Theorem 2.3]{Szabo18ssa4}.
\end{proof}

\begin{corollary} \label{cor:extending-ssa4}
Let $A$ be a separable simple nuclear finite \cstar-algebra.
Suppose that $A$ has finitely many rays of extremal traces.
Let $\CD$ be a finite strongly self-absorbing \cstar-algebra with $A\cong A\otimes\CD$.
Let $\Gamma$ be a countable amenable group in the class $\FC$.
Then an action $\alpha:\Gamma\curvearrowright A$ is strongly outer if and only if $\alpha\cc\alpha\otimes\gamma$ for every action $\gamma:\Gamma\curvearrowright\CD$.
\end{corollary}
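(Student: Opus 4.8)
The plan is to prove both implications separately, in each case leveraging \autoref{thm:extending-ssa4} with the trivial normal subgroup $H=\set{1}$. The key observation that makes this reduction possible is that for $H=\set{1}$ the hypothesis $\alpha|_H\cc(\alpha\otimes\gamma)|_H$ of \autoref{thm:extending-ssa4} amounts to the \cstar-algebraic isomorphism $A\cong A\otimes\CD$ (restricting an action to the trivial group retains only the underlying algebra), which is one of our standing assumptions, while $\Gamma=\Gamma/\set{1}\in\FC$ holds by hypothesis. I would first record an auxiliary fact used in both directions: tensoring preserves strong outerness, in the sense that if at least one of the actions $\alpha:\Gamma\curvearrowright A$ or $\gamma:\Gamma\curvearrowright\CD$ is strongly outer, then $\alpha\otimes\gamma$ is strongly outer on $A\otimes\CD$. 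Since $\CD$ is finite strongly self-absorbing it has a unique trace $\tau_\CD$, so the extremal (quasi-invariant) traces of $A\otimes\CD$ are exactly the $\tau\otimes\tau_\CD$; on the corresponding factor $\pi_\tau(A)''\bar\otimes\pi_{\tau_\CD}(\CD)''$ the extension of $(\alpha\otimes\gamma)_g$ is $\bar\alpha_g\otimes\bar\gamma_g$, which is (properly) outer as soon as one of its two legs is outer. Together with \autoref{prop:strong-outer} and the bookkeeping over the finitely many extremal rays this yields the claim; I would also note that $A\otimes\CD\cong A$ inherits all hypotheses of the corollary.

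For the forward implication I would fix an arbitrary $\gamma:\Gamma\curvearrowright\CD$ and set $\gamma_0:=\gamma^{\otimes\infty}:\Gamma\curvearrowright\CD^{\otimes\infty}\cong\CD$. This $\gamma_0$ is semi-strongly self-absorbing: its half-flip is implemented by tensor-factor permutation unitaries, which commute with the diagonal action $\gamma_0$, so the required approximate innerness is equivariant (cf.\ the theory in \cite{Szabo18ssa2, Szabo17ssa3}); moreover $\gamma_0$ genuinely absorbs $\gamma$, i.e.\ $\gamma\otimes\gamma_0\cong\gamma_0$ via the shift. Applying \autoref{thm:extending-ssa4} to the strongly outer $\alpha$ gives $\alpha\cc\alpha\otimes\gamma_0$, and applying it to the strongly outer action $\alpha\otimes\gamma$ on $A\otimes\CD\cong A$ (which satisfies $(A\otimes\CD)\cong(A\otimes\CD)\otimes\CD$) gives $(\alpha\otimes\gamma)\otimes\gamma_0\cc\alpha\otimes\gamma$. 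Chaining these with the conjugacy $\alpha\otimes\gamma_0\cong\alpha\otimes\gamma\otimes\gamma_0$ yields
\[
\alpha\cc\alpha\otimes\gamma_0\cong(\alpha\otimes\gamma)\otimes\gamma_0\cc\alpha\otimes\gamma,
\]
as desired, since conjugacy implies cocycle conjugacy and $\cc$ is transitive.

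For the converse I would simply exhibit one strongly outer action $\gamma:\Gamma\curvearrowright\CD$ — such exists for every countable amenable $\Gamma$, e.g.\ $\gamma=\id_\CD\otimes\gamma_\CZ$ under $\CD\cong\CD\otimes\CZ$ for a strongly outer $\gamma_\CZ:\Gamma\curvearrowright\CZ$. By the auxiliary fact $\alpha\otimes\gamma$ is then strongly outer, and since strong outerness is invariant under cocycle conjugacy while $\alpha\cc\alpha\otimes\gamma$ holds by hypothesis, $\alpha$ itself is strongly outer.

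The step I expect to be the main obstacle is the reduction, in the forward direction, from an arbitrary $\gamma$ to a semi-strongly self-absorbing model: one must verify carefully that $\gamma^{\otimes\infty}$ is semi-strongly self-absorbing (the correct cocycle-type approximate innerness of the half-flip), that it absorbs $\gamma$ up to genuine conjugacy, and that the two separate applications of \autoref{thm:extending-ssa4} compose cleanly. The auxiliary tensor-outerness lemma, although essentially routine, is a second point deserving care, as it hinges on the identification of the trace simplex of $A\otimes\CD$ and on transporting proper outerness to the tensor-product factors over each extremal ray.
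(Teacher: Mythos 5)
Your ``if'' direction is fine and is essentially what the paper dismisses as trivial: pick one strongly outer $\gamma$ on $\CD$, check that $\alpha\otimes\gamma$ is strongly outer, and use that strong outerness is a cocycle conjugacy invariant. The problem is in the forward direction, and it is concentrated in a single step: the claim that $\gamma_0=\gamma^{\otimes\infty}$ is semi-strongly self-absorbing for an \emph{arbitrary} action $\gamma:\Gamma\curvearrowright\CD$. Your justification --- that ``the half-flip is implemented by tensor-factor permutation unitaries, which commute with the diagonal action'' --- does not hold up. Permuting tensor factors is an automorphism of $\CD^{\otimes\infty}\otimes\CD^{\otimes\infty}$, not an inner automorphism; the approximate innerness of the half-flip for a strongly self-absorbing $\CD$ is witnessed by unitaries that come with no equivariance whatsoever with respect to $\gamma\otimes\gamma$, and there is no soft averaging argument that makes them approximately $(\gamma\otimes\gamma)^{\otimes\infty}$-invariant (or turns them into a cocycle) for a general $\gamma$. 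Note that the analogous non-equivariant statement already fails in spirit: being isomorphic to one's own infinite tensor power does not by itself produce an approximately inner half-flip. Whether $\gamma^{\otimes\infty}$ is always semi-strongly self-absorbing is a genuinely hard question, and it is precisely the difficulty your chain of cocycle conjugacies sweeps under the rug: if you instead try to use a known strongly outer semi-strongly self-absorbing model $\delta$ in place of $\gamma^{\otimes\infty}$, the missing link becomes $\delta\otimes\gamma\cc\delta$ for arbitrary $\gamma$, which is exactly the nontrivial absorption theorem you would need to quote.

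The paper's proof is much shorter because it outsources this step: it invokes \cite[Theorem C]{Szabo18ssa4}, which says that in order to get $\alpha\cc\alpha\otimes\gamma$ for \emph{every} $\gamma:\Gamma\curvearrowright\CD$, it suffices to establish $\alpha\cc\alpha\otimes\delta$ for a \emph{single} strongly outer, semi-strongly self-absorbing $\delta:\Gamma\curvearrowright\CD$; that single absorption is then exactly \autoref{thm:extending-ssa4} with $H=\set{1}$ (where the hypothesis $\alpha|_H\cc(\alpha\otimes\delta)|_H$ degenerates to $A\cong A\otimes\CD$, as you correctly observed). So your reduction to $H=\set{1}$ and your bookkeeping around strong outerness of tensor products are on target, but the passage from ``absorbs one good model action'' to ``absorbs everything'' cannot be done by the elementary infinite-tensor-power trick as written; you must either cite the external absorption theorem or supply a genuine proof that your chosen $\gamma_0$ is semi-strongly self-absorbing and absorbs $\gamma$.
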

\begin{proof}
Since the ``if'' part is trivial, let us consider the ``only if'' part.
Let us assume that $\alpha$ is strongly outer.
By \cite[Theorem C]{Szabo18ssa4}, it suffices to show $\alpha\cc\alpha\otimes\gamma$ for some action $\gamma:\Gamma\curvearrowright\CD$ which is strongly outer and semi-strongly self-absorbing.
But this case follows directly from \autoref{thm:extending-ssa4} applied to the special case $H=\set{1}$.
\end{proof}

\begin{remark} \label{rem:cocycle-actions}
Let us at this point observe that \autoref{cor:Kirchberg}, \autoref{thm:equivariant-Z-stability} and \autoref{cor:extending-ssa4} hold verbatim for cocycle actions $(\alpha,w):\Gamma\curvearrowright A$.
This is because by the Packer--Raeburn stabilization trick \cite{PackerRaeburn89}, there is a genuine action $\beta: \Gamma\curvearrowright A\otimes\CK$ on the compact stabilization which is exterior equivalent to $(\alpha\otimes\id_A,w\otimes 1)$.
The dynamical systems $(F_\omega(A), \tilde{\alpha}_\omega)$ and $(F_\omega(A\otimes\CK),\tilde{\beta}_\omega)$ are then conjugate; see \cite[Proposition 1.9]{BarlakSzabo16} and \cite[Remark 1.8]{Szabo18ssa}.
Since either one of the theorems above apply to the genuine action $\beta$, the results therefore generalize directly.
\end{remark}

The following main result generalizes \cite[Theorems 7.3, 7.4]{Nawata19}, but we note that the main constribution here is in generalizing the Rokhlin-type theorems \cite[Theorems 6.4, 6.7]{Nawata19} with a slightly different approach.

\begin{corollary} \label{cor:generalize-Nawata}
Let $A$ be a separable simple nuclear finite $\CZ$-stable \cstar-algebra.
Suppose that $KK(A,A)=0$ and that $A$ has finitely many rays of extremal traces.
Then every strongly outer $\IZ$-action on $A$ has the Rokhlin property. 
Furthermore, two strongly outer actions $\alpha,\beta: \IZ\curvearrowright A$ are cocycle conjugate if and only if there is an affine homeomorphism $\kappa$ on $(T(A),\Sigma_A)$ such that $\kappa(\tau\circ\alpha)=\kappa(\tau)\circ\beta$ for all $\alpha,\beta\in T(A)$.
\end{corollary}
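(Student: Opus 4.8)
The plan is to handle the two assertions separately, in each case reducing to the weak closure $M=\pi_\tau(A)''$ under the quasi-invariant trace $\tau=\sum_j\tau_j$ supplied by \autoref{rem:finitely-many-traces}, and then transporting von Neumann algebraic data back to $F_\omega(A)$ by combining the $\Gamma$-$\sigma$-ideal property of the trace-kernel ideal $\CJ_A$ (\autoref{rem:trace-kernel}) with equivariant property (SI) (\autoref{cor:equivariant-SI}). I first record that $KK(A,A)=0$ forces $K_0(A)=K_1(A)=0$, via the natural transformation $KK(A,A)\to\operatorname{Hom}(K_*(A),K_*(A))$ sending the class of $\id_A$ to $\id_{K_*(A)}$; this removes all $K$-theoretic obstructions below.

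For the Rokhlin property, since $\alpha$ is strongly outer and $\tau$ is $\alpha$-quasi-invariant, \autoref{prop:strong-outer} shows that the induced $\IZ$-action on the finite direct sum of hyperfinite type~II factors $M$ is properly outer. Invoking the structure theory of such actions underlying Ocneanu's classification \cite{Ocneanu85} -- and dispatching the permutation of the central summands by $\alpha$ exactly as in the proof of \autoref{prop:vN-stuff}\ref{prop:vN-stuff:2} -- I would, for each $p\geq 1$, produce genuine Rokhlin projections forming two towers of heights $p$ and $p+1$ in $M^\omega\cap M'$ that are cyclically permuted by $\alpha^\omega$ and sum to the unit. Pulling these back along the surjective equivariant map $(F_\omega(A),\tilde\alpha_\omega)\to(M^\omega\cap M',\alpha^\omega)$ from \autoref{rem:trace-kernel}, whose kernel is the $\Gamma$-$\sigma$-ideal $\CJ_A$, \autoref{rem:sigma-ideals} yields an equivariant c.p.c.\ order zero lift $\Phi$ of the combined tower system; the deviation of $\Phi$ from a genuine system of Rokhlin projections, together with the deficit $1-\Phi(1)$ of its unit, lies in $\CJ_A$ and is therefore tracially null. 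The concluding step is to absorb this: $\Phi(1)$ is tracially supported at $1$ while $1-\Phi(1)$ is tracially null, so property (SI) corrects the order zero data into genuine Rokhlin projections, exactly as in the passage carried out in the proof of \autoref{thm:equivariant-Z-stability}. Here the vanishing of $K_*(A)$ guarantees that the corrected elements are realized as mutually orthogonal projections summing to the unit, with no obstruction to closing up the towers.

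For the classification statement, the ``only if'' direction is the routine functoriality of the assignment taking a cocycle conjugacy to the induced affine homeomorphism of the scaled trace cone $(T(A),\Sigma_A)$ intertwining the dual actions. For the converse I would combine the Rokhlin property just established with the classifiability of $A$: following the strategy behind Nawata's classification \cite{Nawata19}, two Rokhlin $\IZ$-actions on a classifiable $\CZ$-stable algebra are cocycle conjugate once their complete invariants coincide, and since $KK(A,A)=0$ collapses the $KK$-theoretic part of that invariant, the only surviving datum is the induced action on $(T(A),\Sigma_A)$. The hypothesis provides an affine homeomorphism $\kappa$ intertwining these actions, whence an Evans--Kishimoto-type approximate intertwining -- with the Rokhlin towers absorbing the unitary cocycles that arise -- delivers the cocycle conjugacy $\alpha\cc\beta$.

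The step I expect to be the main obstacle is the final upgrade in the Rokhlin argument, namely converting the tracially corrected order zero lift into a genuine, simultaneously orthogonal and equivariantly permuted system of projections across both towers: one must apply property (SI) coherently to the whole tower system rather than to a single pair $(e,f)$, and verify that $K_*(A)=0$ genuinely suffices to realize the outcome as honest projections rather than mere order zero images. The summand permutation in $M$ requires some additional bookkeeping, but this is handled by the same F{\o}lner averaging already used for \autoref{prop:vN-stuff}\ref{prop:vN-stuff:2}.
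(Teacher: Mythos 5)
Your reduction to the weak closure $M=\pi_\tau(A)''$ and the use of the surjection $(F_\omega(A),\tilde\alpha_\omega)\to(M^\omega\cap M',\alpha^\omega)$ with $\Gamma$-$\sigma$-ideal kernel $\CJ_A$ is sound as far as it goes, and the von Neumann algebraic input (properly outer $\IZ$-actions on finite direct sums of hyperfinite type II factors admit Rokhlin towers of projections in the central sequence algebra) is correct. The genuine gap is exactly the step you flag as ``the main obstacle'': there is no mechanism in this framework for converting the equivariant c.p.c.\ order zero lift $\Phi$ of a Rokhlin tower into an honest system of mutually orthogonal \emph{projections} in $F_\omega(A)^{\tilde\alpha_\omega}$ summing to $1$. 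Property (SI) (\autoref{cor:equivariant-SI}) only produces, from a tracially null $e$ and a tracially supported-at-$1$ element $f$, a contraction $s$ with $fs=s$ and $s^*s=e$; as in the proof of \autoref{thm:equivariant-Z-stability}, the output one can extract from this is a unital $*$-homomorphism from a dimension drop algebra $Z_{k,k+1}$ into $F_\omega(A)^{\tilde\alpha_\omega}$ -- i.e., equivariant $\CZ$-stability -- and never projections. The assertion that $K_*(A)=0$ ``guarantees that the corrected elements are realized as mutually orthogonal projections'' is not an argument: the obstruction to lifting projections along $F_\omega(A)\to F_\omega(A)/\CJ_A$ is not governed by $K_*(A)$, and positive contractions that are projections modulo a (non-unital, highly non-trivial) ideal need not be perturbable to projections. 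Existence of Rokhlin projections is a genuinely stronger property than anything (SI) delivers, which is why it classically requires UHF-absorption of the action.

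The paper closes this gap by an entirely different route: since $KK(A,A)=0$ kills $K_*(A)$, classification (finite nuclear dimension via \cite{CastillejosEvington19}, then \cite{ElliottGongLinNiu17}) gives $A\cong A\otimes\CQ$ for the universal UHF algebra $\CQ$; then \autoref{cor:extending-ssa4} (whose proof is where (SI), \autoref{prop:vN-stuff} and the $\sigma$-ideal machinery are actually deployed, via \autoref{lem:dimrokc-2} and \autoref{thm:extending-ssa4}) shows that the strongly outer action $\alpha$ tensorially absorbs \emph{every} $\IZ$-action on $\CQ$, in particular one with the Rokhlin property -- and the Rokhlin projections then come for free from the model action on the $\CQ$ tensor factor. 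The classification half is likewise not re-proved by an ad hoc Evans--Kishimoto scheme but delegated to \cite[Theorem 5.12]{Szabo17R}, which reduces the cocycle conjugacy statement to the Rokhlin property just established. If you want to salvage your direct approach, you would first have to establish equivariant $\CQ$-absorption of $\alpha$ (which is essentially what \autoref{cor:extending-ssa4} provides), at which point the detour through lifting projections from $M^\omega\cap M'$ becomes unnecessary.
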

\begin{proof}
We first observe that by \cite[Theorem A]{CastillejosEvington19}, $A$ has finite nuclear dimension.
In particular $A$ falls within the scope of the classification theory developed in \cite{ElliottGongLinNiu17} (see also \cite[Corollary D]{CastillejosEvington19}), which implies $A\cong A\otimes\CQ$, where $\CQ$ is the universal UHF algebra.

By \cite[Theorem 5.12]{Szabo17R}, it suffices to prove that every strongly outer action $\alpha:\IZ\curvearrowright A$ has the Rokhlin property.
If we apply \autoref{cor:extending-ssa4} to the special case $\Gamma=\IZ$, it follows that every such action $\alpha$ tensorially absorbs some automorphism on $\CQ$ with the Rokhlin property, and will therefore itself have the Rokhlin property.
This finishes the proof.
\end{proof}


\bibliographystyle{gabor}
\bibliography{master}
\end{document}